\documentclass[12pt]{amsart}
\usepackage[T1]{fontenc}
\usepackage{mathptmx}

\usepackage{comment}
\usepackage[colorlinks=true,linkcolor=magenta,citecolor=blue]{hyperref}     
\numberwithin{equation}{section}
\usepackage{mathtools}
\usepackage{amsmath, amssymb,amsthm}
\usepackage[capitalise]{cleveref}   
\usepackage{array}
\usepackage{graphicx}
\usepackage{float}
\usepackage{caption,subcaption}
\usepackage{tikz}
\usepackage{tikz-cd}
\usetikzlibrary{matrix,arrows,arrows.meta}
\usepackage[mathscr]{euscript}
\usepackage{xcolor}

\def\Cbb{\mathbb{C}}

 \def\Pbb{\mathbb{P}}
\def\Qbb{\mathbb{Q}}

 \def\Zbb{\mathbb{Z}}

 \def\Bcal{\mathcal{B}}
  
\def\Ecal{\mathcal{E}} \def\Fcal{\mathcal{F}}
 
\def\Ical{\mathcal{I}}

\def\Ocal{\mathcal{O}} 
 \def\Rcal{\mathcal{R}}
 
\def\Ucal{\mathcal{U}}

\def\Cfrak{\mathfrak{C}}

 \def\Tfrak{\mathfrak{T}}

\newtheorem{thm}{Theorem}[section]
\newtheorem{mainthm}{Theorem}

\newtheorem{lem}[thm]{Lemma}

\theoremstyle{definition}
\newtheorem{rem}[thm]{Remark}
\newtheorem{defi}[thm]{Definition}
\theoremstyle{plain}
\newtheorem{cor}[thm]{Corollary}
\newtheorem{prop}[thm]{Proposition}

\crefname{thm}{Theorem}{Theorems}
\Crefname{thm}{Theorem}{Theorems}
\crefname{lem}{Lemma}{Lemmas}
\Crefname{lem}{Lemma}{Lemmas}
\crefname{prop}{Proposition}{Propositions}
\Crefname{prop}{Proposition}{Propositions}
\crefname{cor}{Corollary}{Corollaries}
\Crefname{cor}{Corollary}{Corollaries}
\crefname{defi}{Definition}{Definitions}
\Crefname{defi}{Definition}{Definitions}
\crefname{rem}{Remark}{Remarks}
\Crefname{rem}{Remark}{Remarks}

\newcommand\supp{\mathrm{supp}}

\DeclareMathOperator{\Poly}{Poly}
\DeclareMathOperator{\End}{End}
\DeclareMathOperator{\id}{id}

\DeclareMathOperator{\Supp}{Supp}
\DeclareMathOperator{\Sing}{Sing}

\DeclareMathOperator{\Aut}{Aut}

\DeclareMathOperator{\PGL}{PGL}
\DeclareMathOperator{\GL}{GL}
\DeclareMathOperator{\Sym}{Sym}
\DeclareMathOperator{\im}{im}

\newcommand{\Desc}{\operatorname{Desc}}

\title{A general endomorphism of $\mathbb{P}^k$ has trivial iterated centralizer  }

\author{Yugang Zhang}
\address{Yugang Zhang, Universit\'e Bourgogne Europe, CNRS, IMB UMR 5584,
21000 Dijon, France}
\email{yugang.zhang@ube.fr}
\urladdr{https://sites.google.com/view/yugangzhang}
\thanks{The author acknowledges support from the ATRACT programme of the
R\'egion Bourgogne--Franche--Comt\'e through the project ADYAUS
(RECH-ATRAC-000012), and from the French National Research Agency through the
project DynAtrois (ANR-24-CE40-1163).}

\begin{document}

\begin{abstract}
Fix integers $k,d\geq2$. Let $\End_d^k$ denote the parameter space
of endomorphisms of $\Pbb^k$ of algebraic degree $d$.
We prove that there exists a dense Zariski open subset
$U_{d,k}\subset\End_d^k$, defined over $\Qbb$, such that, for every
$f\in U_{d,k}(\Cbb)$, every dominant rational self-map of $\Pbb^k$
commuting with an iterate of $f$ is itself an iterate of $f$.
The key intermediate result classifies dominant rational
semiconjugacies to iterates of $f$ from normal projective varieties
of dimension $k$. The proof uses finite-level monodromy and its action
on the rooted preimage tree of a point outside the branch locus of
an iterate of $f$.
An analogous centralizer theorem holds generically for
regular polynomial endomorphisms of $\Cbb^k$.
These results extend several theorems of Pakovich to higher dimensions.

We give two applications. We first classify periodic dominant
correspondences between a general endomorphism and an arbitrary
endomorphism of degree at least two. We then show that, for a general
$f\in\End_d^k$, an irreducible hypersurface of $\Pbb^k$ is $f$-special
in the sense of Ghioca--Tucker and DeMarco--Mavraki if and only if
it is $f$-preperiodic.
\end{abstract}
\maketitle

\setcounter{tocdepth}{1}
\tableofcontents

\section{Introduction}

\subsection{Main results and applications}

\subsubsection{Main results}
Throughout the paper, we fix integers $k,d\geq2$, where $k$ is the
dimension and $d$ is the algebraic degree. We work over $\Cbb$.
For $e\geq1$, let $\operatorname{Rat}_e^k$ denote the parameter space of
dominant rational self-maps of $\Pbb^k$ of algebraic degree $e$, and let
$\End_e^k:=\End_e(\Pbb^k)\subset\operatorname{Rat}_e^k$
denote the locus of endomorphisms.

Set $N_{e,k}:=(k+1)\binom{k+e}{e}-1$.
After choosing homogeneous coordinates, $\operatorname{Rat}_e^k$ is identified
with the Zariski open subset of $\Pbb^{N_{e,k}}$ consisting of
$(k+1)$-tuples of homogeneous forms of degree $e$, without a common
factor and defining a dominant map, up to a common scalar.
Such a tuple defines an endomorphism precisely when its Macaulay
resultant does not vanish. Thus $\End_e^k$ is a dense affine Zariski
open subset of $\operatorname{Rat}_e^k$, of dimension $N_{e,k}$ and
defined over $\Qbb$; see \cite{DSbook,LevyMorphisms}.
For $k=1$, see also \cite{SilvermanRatMaps}.
Set
\[
 \operatorname{Rat}(\Pbb^k):=\bigsqcup_{e\geq1}\operatorname{Rat}_e^k,
 \qquad
 \End^k:=\bigsqcup_{e\geq1}\End_e^k.
\]

For $f\in\End^k$, define its \emph{(rational) centralizer} and its
\emph{(rational) iterated centralizer} by
\[
 C_{\mathrm{rat}}(f)
 :=\{h\in\operatorname{Rat}(\Pbb^k)\mid h\circ f=f\circ h\},
 \qquad
 C_{\mathrm{rat}}(f^\infty)
 :=\bigcup_{m\geq1}C_{\mathrm{rat}}(f^m).
\]
We write $\langle f\rangle:=\{f^s\mid s\geq0\}$, where
$f^s$ denotes the $s$-th iterate and $f^0=\id$.

Our main result states that the iterated centralizer of a
general endomorphism of degree at least two is trivial in the
following precise sense.

\begin{mainthm}\label{thm:main}
There exists a dense Zariski open subset
$U_{d,k}\subset\End_d^k$, defined over $\Qbb$, such that, for every
$f\in U_{d,k}(\Cbb)$,
\[
 C_{\mathrm{rat}}(f^\infty)=\langle f\rangle.
\]
\end{mainthm}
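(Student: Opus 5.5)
We deduce \cref{thm:main} from a classification of dominant rational semiconjugacies, as announced in the abstract. Suppose $h\in C_{\mathrm{rat}}(f^m)$ has algebraic degree $e$, so $h\circ f^m=f^m\circ h$. Read this identity as saying that $h\colon\Pbb^k\dashrightarrow\Pbb^k$ is a dominant rational semiconjugacy from the dynamical system $(\Pbb^k,f^m)$ to $(\Pbb^k,f^m)$. The key intermediate statement is the following. For $f$ in a suitable dense open $U_{d,k}$ defined over $\Qbb$, consider any normal projective variety $X$ of dimension $k$, any dominant rational map $g\colon X\dashrightarrow X$, and any dominant rational $\pi\colon X\dashrightarrow\Pbb^k$ with $\pi\circ g=f^n\circ\pi$. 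Then $\pi$ factors through $f^s$ in a controlled way: after replacing $X$ by a birational model, $\pi=f^s\circ\sigma$ with $\sigma$ birational and conjugating $g$ to $f^n$. Applying this with $X=\Pbb^k$ and $\pi=h$ gives $h=f^s\circ\sigma$, where $\sigma$ is a birational self-map of $\Pbb^k$ commuting with $f^m$. It then remains to show that $\sigma=\id$. For general $f$, any birational map commuting with an iterate must preserve the totally invariant structure: it permutes the preimage tree and preserves the postcritical set, which for general $f$ has trivial symmetry. Comparing degrees, $\deg(f^m\circ\sigma)=\deg(\sigma\circ f^m)$ forces $\deg\sigma\cdot d^m=d^m\cdot \deg\sigma$ on the relevant pieces, which alone does not help. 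Instead we use that $\sigma$ must map the critical locus of $f^m$ birationally to itself, and that for general $f$ the first-level critical components have no nontrivial birational correspondence.

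The semiconjugacy classification uses monodromy. Fix a point $z$ outside the branch locus of $f^N$ for every $N$; such points exist, for instance a very general point. Let $T_z$ be its rooted preimage tree, with $d^{kn}$ vertices at level $n$. The finite-level monodromy group $G_n$ of $f^n$, the Galois group of the Galois closure of $\Cbb(\Pbb^k)/ (f^n)^*\Cbb(\Pbb^k)$, acts on level $n$ of the tree. The first step is to show that for general $f$, $G_n$ is as large as possible given the tree structure, namely the iterated wreath product of the symmetric group $S_{d^k}$. We prove this for $n=1$ by exhibiting, by degeneration over $\Qbb$, one $f$ whose monodromy is $S_{d^k}$, generated by simple transpositions from a generic critical point. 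Openness follows from the fact that full symmetric monodromy is a Zariski open condition on the branch data, combined with Hilbert irreducibility style specialization. Iterating this uses that the branch loci of $f$ and of its iterates are in general position at new levels, which again holds on a Zariski open set.

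Given the full wreath monodromy, an intermediate field between $\Cbb(\Pbb^k)$ and $(f^n)^*\Cbb(\Pbb^k)$ corresponds to a block system for $G_n$. The block systems of an iterated wreath product of primitive groups are exactly the level partitions, so every intermediate field is $(f^s)^*\Cbb(\Pbb^k)$. Now $\pi^*\Cbb(\Pbb^k)\subset\Cbb(X)$, and the relation $\pi\circ g=f^n\circ\pi$ places $\pi^*$ of the tower inside the tower of $g$. From this we extract that $\pi$ factors as $f^s\circ\sigma$ with $\sigma$ of degree one. Pinning down this comparison of field towers is, I expect, the main obstacle. One has to relate the monodromy of $\pi$ with that of $f^n$, for example via the fibered product $X\times_{\Pbb^k}\Pbb^k$ along $f^{nj}$ for large $j$ and a counting argument on the degrees $\deg\pi\cdot d^{knj}$. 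A further difficulty is that $\deg\pi$ need not a priori be a power of $d^k$, so we must rule out that possibility using primitivity.

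Finally, we need $U_{d,k}$ to be defined over $\Qbb$. The bad locus is a countable union of conditions over $\Qbb$, one for each iterate level. Using a bound reducing everything to level one, we show that it is contained in a single proper Zariski closed subset defined over $\Qbb$, and we take $U_{d,k}$ to be its complement.
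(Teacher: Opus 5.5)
Your outline follows the same chain as the paper: tree monodromy $\Rightarrow$ invariant partitions of the leaves are the fibers of some $f^j$ $\Rightarrow$ decomposition rigidity $\Rightarrow$ semiconjugacy rigidity $\Rightarrow$ an automorphism commuting with $f^m$. But the step you yourself call ``the main obstacle'' is left open, and it is exactly where an idea is needed. Knowing every intermediate field of $\Cbb(\Pbb^k)/(f^n)^*\Cbb(\Pbb^k)$ says nothing about $h$ until you know that $h$ is a left compositional factor of some iterate of $f$. Neither $hf^m=f^mh$ nor degree counting gives this directly.

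The paper proves it by induction on $\deg_{\mathrm{top}}h$, with $h$ finite after Stein factorization. Normalizing the image of $(h,u)$ in the fiber product $\Pbb^k\times_{\Pbb^k}Y$ of $f^m$ and $h$ gives $h=h_1p$ and $u=vp$.
\begin{itemize}
\item If $p$ is birational, comparing branch loci gives $(f^m)^{-1}(B_h)=B_h$. Hence $B_h=\varnothing$ and $h$ is an isomorphism (Lemma~\ref{lem:sc-ramification}), because a simple $f$ has no totally invariant hypersurface (Corollary~\ref{cor:no-totally-invariant-hypersurface}, which rests on $f$ not being PCF).
\item Otherwise $f^mh_1=h_1(pv)$ with $\deg_{\mathrm{top}}h_1<\deg_{\mathrm{top}}h$. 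Induction gives $h_1=f^{s_1}\mu_1$, and $f^m=(\mu_1p)(v\mu_1^{-1})$ exhibits $\mu_1p$ as a left factor of $f^m$. Decomposition rigidity (Theorem~\ref{thm:sc-normal-factors}) then applies.
\end{itemize}
This also disposes of your worry that $\deg\pi$ might not be a power of $d^k$. The same fact about totally invariant hypersurfaces is what makes your birational $\sigma$ linear: the hypersurfaces it contracts form an $f^m$-totally invariant hypersurface. Your proposal does not address this.

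Second, uniformity in the level is asserted rather than proved. Each of your level-$n$ monodromy statements holds on a Zariski open set depending on $n$, and a countable intersection of such sets need not contain a dense Zariski open set. The ``bound reducing everything to level one'' is precisely what is missing, and it cannot be formal. This is where non-PCF-ness enters, and the fact that PCF maps are not Zariski dense is the theorem of Gauthier--Taflin--Vigny (Theorem~\ref{thm:pcf-sparsity}), which fails in dimension one.

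In the paper, (S1)--(S3) together with (S4) imply, for all $m$ at once, that the $B_i$ and the components of the $R_j$ are pairwise distinct (Lemma~\ref{lem:ram-level-separation}). So meridians of $B_{m-j-1}$ act by disjoint transpositions through \'etale--fold--\'etale factorizations. Meanwhile (S3) produces kernel elements moving leaves under only some vertices of $T_1(y)$ (Lemma~\ref{lem:kernel}). This yields only that $G_m(y)$ is transitive on each $\Omega_j$ (Lemma~\ref{lem:sc-pair-orbits}), which already classifies invariant partitions. Your full iterated wreath product is stronger, and you give no argument for it beyond level one.

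Finally, your argument for $\sigma=\id$ does not work as stated. A priori $\alpha$ only permutes the components of $R_{f^m}$. Even when $\alpha(R_f)=R_f$, symmetries of $R_f$ cannot be the mechanism: for $(d,k)=(2,2)$ the curve $R_f$ is an integral plane cubic, whose projective stabilizer is never trivial. The paper instead applies decomposition rigidity to $f^m=(\alpha^{-1}f^{m-1})(f\alpha)$ to get $f\alpha=\gamma f$ with $\gamma$ linear, and then $\alpha f^{m-1}=f^{m-1}\gamma$. Thus $\alpha,\kappa_f(\alpha)\in H_f$, and it concludes with the generic stabilizer statement $\kappa_f^{-1}(H_f)=\{\id\}$ (Proposition~\ref{prop:generic-rigidity}). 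This is what handles all $m$ simultaneously.
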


\begin{rem}
Since $U_{d,k}$ is defined over $\Qbb$, the same conclusion holds
after a base change to any algebraically closed field of
characteristic zero by the Lefschetz principle.
\end{rem}

Some genericity is necessary. For example, the power map
$p_d:[X_0:\cdots:X_k]\mapsto[X_0^d:\cdots:X_k^d]$ commutes with
every other power map and therefore its iterated centralizer is strictly larger
than $\langle p_d\rangle$. Moreover, for every $f$ and every
$N\geq2$, one has $f\in C_{\mathrm{rat}}(f^N)$ but
$f\notin\langle f^N\rangle$.

We work with a class of endomorphisms, called \emph{simple}, defined
by explicit conditions on their ramification and postcritical behavior
(Definition~\ref{def:simple}). A simple endomorphism satisfying an
additional algebraic symmetry condition is called \emph{strongly simple}
(Definition~\ref{defi:strongly-simple}). The strongly simple locus contains
a dense Zariski open subset defined over $\Qbb$, and the conclusion of
Theorem~\ref{thm:main} holds for every strongly simple endomorphism.

The one-dimensional analogue of Theorem~\ref{thm:main} for a general
rational map follows from Pakovich's
work~\cite{pakovich-deg4,pakovich2026periodiccurvesgeneralendomorphisms}.
Inspired by his work, our key intermediate result is the following
semiconjugacy rigidity theorem, extending
\cite[Theorem~1.4]{pakovich-deg4} and
\cite[Theorem~5.8]{pakovich2026periodiccurvesgeneralendomorphisms}
to higher dimensions.

\begin{mainthm}\label{thm:main-semiconjugacy}
Let $f\in\End_d^k$ be simple, and let $m\geq1$.
Let $Y$ be a normal integral projective variety of dimension $k$, and let
\[
 h:Y\dashrightarrow\Pbb^k,\qquad u:Y\dashrightarrow Y
\]
be dominant rational maps satisfying
\[
 f^m\circ h=h\circ u.
\]
Then there are a unique integer $s\geq0$ and a unique birational map
$\mu:Y\dashrightarrow\Pbb^k$ such that
\[
 h=f^s\circ\mu,\qquad u=\mu^{-1}\circ f^m\circ\mu.
\]
Moreover, if $h$ is a finite morphism, then $\mu$ is an isomorphism.
If $Y=\Pbb^k$ and $u$ is a morphism, then
$\mu\in\PGL_{k+1}(\Cbb)$; in particular, $h$ is a morphism.
\end{mainthm}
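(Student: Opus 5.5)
The plan is to translate the functional equation into a statement about function fields and Galois/monodromy groups, and then use the rigidity of the monodromy of a simple $f$ acting on its rooted preimage tree. Write $K=\Cbb(\Pbb^k)$ and $F=f^m$. Since $h$ and $u$ are dominant between $k$-dimensional varieties, they induce finite field extensions $h^*:K\hookrightarrow\Cbb(Y)$ and $u^*:\Cbb(Y)\hookrightarrow\Cbb(Y)$. The relation $F\circ h=h\circ u$ then gives a tower
\[
 K\supset F^*K\supset (F^2)^*K\supset\cdots,
\]
and pulling back along $h$ and iterating $u$ shows that $h^*K$ contains $(h\circ u^n)^*K=(F^n\circ h)^*K$ for all $n$. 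By normality of $Y$ and Stein factorization, $h$ factors as a birational map followed by a finite map. So the question becomes: which intermediate fields $L$ with $(F^n)^*K\subset L$ can arise, and more precisely which subfields $E=h^*K\subset\Cbb(Y)$ with $[\Cbb(Y):E]<\infty$ are compatible with $u$.

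The first key step is a decomposition result for simple $f$: every intermediate field between $(f^n)^*K$ and $K$ has the form $(f^j)^*K$. Equivalently, $f^n$ admits no decomposition into dominant rational maps other than $f^n=f^{n-j}\circ f^j$ up to birational change of the middle variety. I would prove this with finite-level monodromy. Take a point $z$ outside the branch locus of $f^n$. The monodromy group $G_n$ of $f^n$ acts on the rooted preimage tree of depth $n$. The simplicity conditions on ramification and on the postcritical set are exactly what is needed to show, via simple branching of each level and independence between levels, that $G_n$ is the full iterated wreath product $S_{d^k}\wr\cdots\wr S_{d^k}$. Intermediate covers correspond to subgroups containing a point stabilizer. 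For the full iterated wreath product of $2$-transitive (primitive) factors, these subgroups are exactly the stabilizers of vertices at level $j$, so the intermediate covers are exactly $f^j$.

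The second step applies this to $h$. Choose $n$ large, and consider the cover $Y\to\Pbb^k$ given by $h\circ u^n=F^n\circ h$. It factors through $h$ and also through $F^n$, and one needs to compare the two. I would work with the Galois closure of $F^n\circ h$. The composite $h\circ u^n$ exhibits $\deg h\cdot\deg(u)^n=d^{kmn}\deg h$, which forces $\deg u=d^{km}$. Applying the monodromy of $F^n\circ h$ and using that $u^n$ realizes the top part of it, I would show that the stabilizer subgroup for $h$ contains a level-$s$ vertex stabilizer in $G_N$, with $N\ge s$. Via the first step this gives $h^*K=(f^s)^*K'$ inside $\Cbb(Y)$, where $K'$ is a subfield with $[\Cbb(Y):K']=1$. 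This produces $h=f^s\circ\mu$ with $\mu$ birational. Then $u=\mu^{-1}\circ F\circ\mu$ follows from $f^s\circ(\mu\circ u)=f^s\circ(F\circ\mu)$ combined with a uniqueness-of-lift argument: two lifts through $f^s$ that agree after composition differ by a deck-type transformation of $f^s$. Triviality of the deck group, again from the wreath product structure, forces them to be equal. Uniqueness of $s$ comes from degrees, and uniqueness of $\mu$ comes from the same deck triviality.

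For the final assertions: if $h$ is finite, then $\mu$ is birational and $h=f^s\circ\mu$ with $f^s$ finite. This makes $\mu$ quasi-finite onto its image and proper. Normality of $Y$ and Zariski's main theorem then give that $\mu$ is an isomorphism. If $Y=\Pbb^k$ and $u$ is a morphism, then $\mu$ conjugates $F$ to the endomorphism $u$. Its indeterminacy and exceptional loci would be totally invariant under a holomorphic endomorphism of degree $\ge2$. For general (simple) $f$ there are no totally invariant proper subvarieties, so $\mu$ is regular with regular inverse, hence lies in $\PGL_{k+1}$. I expect the main obstacle to be the second step: showing that a semiconjugacy from an arbitrary $Y$ forces the subgroup for $h$ to contain a vertex stabilizer. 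Here I would first try to exploit that the Galois closure of $h\circ u^n$ equals that of $F^n\circ h$, and then count orbits on the preimage tree.
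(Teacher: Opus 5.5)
There is a genuine gap in your second step, which is where the proof actually lives; as written it has no mechanism. Step 1 only describes factorizations $f^N=P\circ Q$. The relation $F^n\circ h=h\circ u^n$ is not such a factorization. Saying that ``the stabilizer subgroup for $h$ contains a level-$s$ vertex stabilizer in $G_N$'' presupposes that $h$ divides an iterate of $f$, which is the conclusion. In particular, you never treat the case where $(h,u)\colon Y\to\Pbb^k\times Y$ is birational onto its image. There no decomposition of an iterate of $f$ is visible at all, so the monodromy of $f$ alone cannot conclude.

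The missing idea is the paper's fiber-product induction on $\deg_{\mathrm{top}}h$, after reducing to finite $h$ by Stein factorization as you also do. Normalize the image of $(h,u)$ in $W=\Pbb^k\times_{F,\Pbb^k,h}Y$ to write $h=h_1p$ and $u=vp$.
\begin{itemize}
\item If $p$ is birational, \'etale base change over $U_m$ and over $\Pbb^k\setminus F^{-1}(B_h)$ gives $F^{-1}(B_h)=B_h$. Then $B_h=\varnothing$ by Corollary~\ref{cor:no-totally-invariant-hypersurface}; this is where (S4) enters. Since $\Pbb^k$ is simply connected, $h$ is an isomorphism (Lemma~\ref{lem:sc-ramification}).
\item Otherwise $Fh_1=h_1(pv)$ with $\deg_{\mathrm{top}}h_1<\deg_{\mathrm{top}}h$. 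Induction gives $h_1=f^{s_1}\mu_1$. Now $F=(\mu_1p)(v\mu_1^{-1})$ \emph{is} a decomposition of $f^m$, so Theorem~\ref{thm:sc-normal-factors} yields $h=f^s\mu$ and $u=\mu^{-1}f^m\mu$ simultaneously.
\end{itemize}

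Two other steps also fail as stated.

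First, simplicity does not give that $G_n$ is the full iterated wreath product. Fix labelings of the subtrees and put $\epsilon_2(\pi;(g_v)_v):=\prod_v\operatorname{sgn}(g_v)$ on $S_{d^k}\wr S_{d^k}$; this is a homomorphism. A meridian of $B_0$ acts on $T_2(y)$ as an involution swapping two subtrees, so $\epsilon_2=1$ on it. A meridian of $B_1$ acts by a transposition below exactly $\deg(f|_{B_f}\colon B_f\to f(B_f))$ vertices of $T_1(y)$ and trivially elsewhere. Hence $G_2\subseteq\ker\epsilon_2\subsetneq S_{d^k}\wr S_{d^k}$ whenever that degree is even, and (S1)--(S4) say nothing about its parity. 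The paper proves only what the block classification needs. Using (S3) to produce kernel elements acting below some but not all vertices of $T_1(y)$ (Lemma~\ref{lem:kernel}), it shows that each $\Omega_j$ is a single orbit (Lemmas~\ref{lem:two-coordinate-extraction} and~\ref{lem:sc-pair-orbits}). Your Step 1 conclusion is true, but your stated justification does not support it.

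Second, your cancellation step is invalid. By Theorem~\ref{thm:sc-deck-rigidity}, the deck group of $f^s$ is already trivial. Yet the two projections from the normalization of any non-diagonal component of $\{f^s(x)=f^s(x')\}$ are distinct dominant maps $a\neq b$ with $f^sa=f^sb$. So deck triviality cannot turn $f^s(\mu u)=f^s(F\mu)$ into $\mu u=F\mu$. In the paper this identity comes directly out of the decomposition step.

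Finally, simplicity excludes only totally invariant hypersurfaces, not arbitrary proper subvarieties. That suffices for the last assertion, because a birational self-map of $\Pbb^k$ contracting no hypersurface is linear.
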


In particular, if $f$ is simple, $h\in\operatorname{Rat}(\Pbb^k)$ is birational
and $h\circ f=g\circ h$ for some $g\in\End^k$, then
$h\in\PGL_{k+1}(\Cbb)$. This gives birational conjugacy rigidity for
simple endomorphisms in arbitrary dimension, and the conjugacy is
necessarily linear; compare Cantat--Xie
\cite[Theorem~A]{CantatXieBirationalConjugacies} in dimension two.

Applied with $Y=\Pbb^k$ and $u=f^m$, Theorem~\ref{thm:main-semiconjugacy} gives
\[
 h\circ f^m=f^m\circ h
 \quad\Longrightarrow\quad
 h=f^s\circ\alpha,\qquad \alpha\circ f^m=f^m\circ\alpha,
 \qquad \alpha\in\PGL_{k+1}(\Cbb).
\]
For strongly simple $f$, the additional symmetry condition forces
$\alpha=\id$ for every $m\geq1$, giving the conclusion of
Theorem~\ref{thm:main}.

As in Pakovich's work~\cite{pakovich-deg4,pakovich2026periodiccurvesgeneralendomorphisms},
decomposition rigidity is a key ingredient in our proof of semiconjugacy
rigidity. Among other ingredients, his proofs draw on genus estimates based on the
Riemann--Hurwitz formula and orbifold methods on Riemann surfaces,
with arguments tailored to dimension one.
Our proof studies the monodromy action on the rooted preimage trre
of a point outside the branch locus of an iterate of $f$.
The key step is to show that the monodromy orbit of a pair of
distinct leaves is determined precisely by the first iterate under
which the two leaves have the same image. We prove this by combining
local monodromy around the postcritical hypersurfaces with the
independent action of suitable subgroups on distinct subtrees.
Consequently, every monodromy-invariant partition of the leaves at
depth $m$ is the partition into fibers of $f^j$ for some $0\leq j\leq m$.
This gives the higher-dimensional decomposition rigidity theorem
(Theorem~\ref{thm:sc-normal-factors}).
To show that the simplicity conditions underlying our
argument hold on a dense Zariski open subset of $\End_d^k$, we use,
in particular, the theorem of
Gauthier--Taflin--Vigny~\cite{GauthierTaflinVignyPCF} that
postcritically finite (PCF) endomorphisms are not Zariski dense,
in sharp contrast with the situation in dimension
one~\cite[Theorem~A]{DeMarcoPCFDensity}.
See Section~\ref{subsec:strategy} for an outline of the proof of
Theorem~\ref{thm:main-semiconjugacy}.

\medskip

Following Sibony~\cite{sibonybook}, a polynomial endomorphism
$f:\Cbb^k\to\Cbb^k$ of degree $d$ is \emph{regular} if it extends
holomorphically to an endomorphism of $\Pbb^k$.
Equivalently, the resultant of the leading homogeneous parts of the
components of $f$ is nonzero. By abuse of notation, we
continue to denote the projective extension by $f$.

No regular polynomial endomorphism is simple, since the hyperplane
at infinity is a totally invariant component of its ramification
divisor. Nevertheless, the same centralizer conclusion holds
generically within this family.

Let $\Poly_d^k$ denote the parameter space of such maps.
Thus $\Poly_d^k$ is a dense affine
Zariski open subset, defined over $\Qbb$, of an affine coefficient
space of dimension $k\binom{k+d}{d}$.

\begin{mainthm}\label{thm:regular-polynomial-centralizer}
There exists a dense Zariski open subset
$U^{\mathrm{poly}}_{d,k}\subset\Poly_d^k$, defined over $\Qbb$,
such that, for every $f\in U^{\mathrm{poly}}_{d,k}(\Cbb)$,
\[
 C_{\mathrm{rat}}(f^\infty)=\langle f\rangle.
\]
\end{mainthm}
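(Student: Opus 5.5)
Since no regular polynomial endomorphism is simple, Theorem~\ref{thm:main-semiconjugacy} does not apply verbatim. The plan is to rerun its monodromy argument relative to the hyperplane at infinity $H_\infty$. The ramification divisor of $f$ is $R_f=R_f^{\mathrm{aff}}+(d-1)H_\infty$, and $H_\infty$ is totally invariant. Over $\Cbb^k$, however, $f$ restricts to a finite étale-away-from-$R^{\mathrm{aff}}$ self-cover of $\Cbb^k$. So I will define a class of \emph{simple regular polynomial} maps. The conditions are imposed only on the affine part: $R_f^{\mathrm{aff}}$ is irreducible with simple ramification, $f|_{R_f^{\mathrm{aff}}}$ is birational onto its image, and the affine postcritical hypersurfaces $f^n(R_f^{\mathrm{aff}})$ are pairwise distinct and in general position. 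We also require a genericity condition on the leading homogeneous part $f_\infty$ acting on $H_\infty\cong\Pbb^{k-1}$, namely that $f_\infty$ is simple in the sense of Definition~\ref{def:simple} when $k-1\geq2$, and general for $k-1=1$.

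The first step is Zariski density over $\Qbb$. For a polynomial map we cannot use the Gauthier--Taflin--Vigny theorem directly on $\Poly_d^k$. Instead I would exhibit one explicit map over $\Qbb$ satisfying the conditions, for instance a small perturbation $z\mapsto (z_1^d,\dots,z_k^d)+\ell(z)$ checked directly. Alternatively, one can argue that PCF regular polynomial maps are non-dense, by restricting the GTV argument to the closed subfamily.

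The second step is the monodromy rigidity. Take $x\in\Cbb^k$ outside the affine branch locus of $f^m$ and consider the preimage tree in $\Cbb^k$, all $d^{k}$ preimages being affine. Local monodromy around the affine postcritical hypersurfaces $f^n(R^{\mathrm{aff}})$ provides transpositions inside single subtrees. This is the same mechanism as before, and it never uses loops around $H_\infty$. Hence the key orbit statement, that pairs of leaves are classified by the first level at which they merge, goes through. We then obtain the analogue of Theorem~\ref{thm:sc-normal-factors}: any factorization $f^m=a\circ b$ through normal varieties is $f^{m-j}\circ f^j$ up to birational change.

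With this in hand, the argument of Theorem~\ref{thm:main-semiconjugacy} gives, for $h\in C_{\mathrm{rat}}(f^m)$, $h=f^s\circ\mu$ with $\mu$ birational commuting with $f^m$. Since $f^m$ is a morphism, $\mu\in\PGL_{k+1}(\Cbb)$.

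It remains to kill $\alpha:=\mu$. Here the polynomial structure helps. The automorphism $\alpha$ must preserve the totally invariant hypersurface $H_\infty$, because $H_\infty$ is the unique component of $R_{f^m}$ that is totally invariant, and $R_{f^m}$ is preserved by $\alpha$. So $\alpha$ is affine, $\alpha(z)=Az+b$. Then $\alpha|_{H_\infty}$ commutes with $f_\infty^m$. For generic $f_\infty$, the automorphism group commuting with an iterate of $f_\infty$ is trivial, which forces $A$ to be a scalar $\lambda$. This follows by induction on $k$, using Theorem~\ref{thm:main} (or its linear part) in dimension $k-1$, or Pakovich's result when $k-1=1$. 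Comparing top-degree terms then gives $\lambda^{d^m}=\lambda$.

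Finally, I will impose an additional symmetry-breaking condition on the affine part, analogous to strong simplicity: no nontrivial map $z\mapsto\lambda z+b$ preserves the finite set of data formed by the affine postcritical configuration. This condition must be open, dense and defined over $\Qbb$. It forces $\alpha=\id$.

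The main obstacle I expect is the second step. One must verify that monodromy generated only by affine postcritical loops still acts as needed on leaves whose ancestors lie near $H_\infty$. One must also check that the finite-level fundamental group computation does not require loops around $H_\infty$. I would first try to prove transitivity of the sibling groups directly from irreducibility of $R^{\mathrm{aff}}$ and simple ramification, via a Zariski-type connectedness of $\Cbb^k\setminus\mathrm{branch}$.
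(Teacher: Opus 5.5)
Your step ``since $f^m$ is a morphism, $\mu\in\PGL_{k+1}(\Cbb)$'' is a genuine gap, and it is exactly where the totally invariant hyperplane causes trouble. In the paper, this implication (the last assertion of Theorem~\ref{thm:main-semiconjugacy}) uses two facts. First, the hypersurfaces $E$ contracted by $\mu^{-1}$ satisfy $(f^m)^{-1}(E)=E$. Second, Corollary~\ref{cor:no-totally-invariant-hypersurface} then forces $E=\varnothing$. That corollary is false for every regular polynomial map, because $H_\infty$ is totally invariant. Nothing you impose prevents $\mu$ from contracting $H_\infty$ or having indeterminacy on it. Without extra hypotheses the implication is simply false: for $a\geq1$, the map $(x,y)\mapsto(x,x^ay)$ is birational, nonlinear, and commutes with the regular map $(x^2,y^2)$. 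The same corollary is used in Lemma~\ref{lem:sc-ramification}, where $B_h$ may now equal $H_\infty$; that case can be recovered from $\pi_1(\Cbb^k)=1$.

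What you need is the content of Lemma~\ref{lem:unique-infinity}: $H_\infty$ is the only totally invariant hypersurface, and $f_\Pi^N$ has no nonempty proper totally invariant algebraic subset. Then $\Supp(\mu^*\Pi)=\Pi$, so $I_\mu\subset\Pi$ is totally invariant under $f_\Pi^m$, hence empty. Your affine non-PCF condition gives the first property. Simplicity of $f_\infty$ does not give the second, since it says nothing about totally invariant subvarieties of $\Pi$ of codimension at least two. This is precisely the first step of the paper's proof, which applies it directly to an arbitrary $g\in C_{\mathrm{rat}}(f^N)$ to show that $g$ is already a regular polynomial endomorphism.

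The second gap is genericity inside $\Poly_d^k$. Your affine analogue of (S4) is not a Zariski-open condition, so checking one explicit map produces no dense open subset. In dimension one, PCF maps are Zariski dense even though they form a countable union of proper subvarieties. Because $C_{\mathrm{rat}}(f^\infty)$ involves every $m$, you cannot replace (S4) by distinctness of finitely many $B_i$. Theorem~\ref{thm:pcf-sparsity} concerns $\End_d^k$. It does not by itself prevent the closure of the PCF locus from containing the lower-dimensional subvariety $\Poly_d^k$, so you would need a sparsity statement inside $\Poly_d^k$, which you only gesture at. Similarly, your final symmetry-breaking condition is never specified: you do not identify a finite, $m$-independent configuration that $\alpha$ must preserve. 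Its density in $\Poly_d^k$ is only asserted, and Proposition~\ref{prop:generic-rigidity} is again a statement on $\End_d^k$. By contrast, the monodromy step you flag as the main obstacle is harmless: $\Cbb^k$ is simply connected, so meridians of the affine branch components normally generate.

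The paper avoids all of these inputs and never reruns monodromy for polynomial maps. After the reduction above, $G_f\circ g=eG_f$ gives equality of equilibrium measures when $e\geq2$. For $k\geq3$, Theorem~\ref{thm:main} applied to $f_\Pi$ on $\Pi\cong\Pbb^{k-1}$ gives $g_\Pi=f_\Pi^m$. Measure rigidity (Theorem~\ref{thm:measure-rigidity-input}) then gives $g=\sigma f^m$ with $\sigma$ affine and trivial on $\Pi$, and generic measure rigidity applied to $\sigma f$ forces $\sigma=\id$. The case $e=1$ is treated through $gf$, and for $k=2$ the planar measure-rigidity statement is used directly. All genericity is thus pulled back from $\End_d(\Pi)$ through the surjective restriction map $f\mapsto f_\Pi$, or supplied by the cited measure-rigidity and Dinh--Sibony results.
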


Its proof combines Theorem~\ref{thm:main} with the image--fiber
principle and measure rigidity from the author's earlier
work~\cite{ZhangMeasureRigidity}. 

\subsubsection{Applications}
As in~\cite{pakovich-deg4,pakovich2026periodiccurvesgeneralendomorphisms},
we classify $k$-dimensional periodic algebraic correspondences in
$\Pbb^k\times\Pbb^k$.
Their classification has played an important role in arithmetic
and complex dynamics. Medvedev--Scanlon~\cite{MedvedevScanlon2014}
classified invariant subvarieties under products of polynomial maps
in one variable. Pakovich extended the classification of invariant curves
from polynomial maps to nonspecial rational
maps~\cite{PakovichInvariantCurves}. He also proved that, for
general rational maps of the same degree, the existence of a
periodic curve dominating both factors forces the maps to be
conjugate~\cite{pakovich-deg4,pakovich2026periodiccurvesgeneralendomorphisms}.
This implication is an ingredient in Ji--Xie's proof that the
multiplier spectrum morphism is generically injective
\cite{JiXieMultiplierSpectrum}.

A \emph{dominant correspondence} is an integral subvariety
$Z\subset\Pbb^k\times\Pbb^k$ of dimension $k$ whose two projections
are dominant. For $\Phi=f\times g$, it is \emph{periodic} if
$\Phi^m(Z)=Z$ for some $m\geq1$.

\begin{mainthm}
\label{thm:periodic-correspondences}
Let $f\in U_{d,k}(\Cbb)$ be strongly simple, and let
$g:\Pbb^k\to\Pbb^k$ be any endomorphism of degree at least two.
Let $Z\subset\Pbb^k\times\Pbb^k$ be a dominant correspondence.
The correspondence $Z$ is periodic under $f\times g$ if and only if
it has one of the forms
\begin{equation}\label{eq:periodic-correspondence-graphs}
 Z=\bigl\{(x,y)\mid y=(\alpha\circ f^r)(x)\bigr\}
 \quad\text{or}\quad
 Z=\bigl\{(x,y)\mid x=(f^r\circ\alpha^{-1})(y)\bigr\},
\end{equation}
where $r\geq0$ and $\alpha\in\PGL_{k+1}(\Cbb)$ satisfies
$g=\alpha\circ f\circ\alpha^{-1}$.
In particular, every periodic dominant correspondence is invariant under $f\times g$.
\end{mainthm}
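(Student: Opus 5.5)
We prove the ``if'' direction by direct computation and reduce the ``only if'' direction to Theorem~\ref{thm:main-semiconjugacy}. For the ``if'' direction, suppose $g=\alpha\circ f\circ\alpha^{-1}$ and $Z$ is the graph of $\phi=\alpha\circ f^r$. Then $g\circ\phi=\alpha\circ f^{r+1}=\phi\circ f$, so $(f\times g)(Z)=Z$. The transposed form $x=f^r\circ\alpha^{-1}(y)$ is handled symmetrically, since $f\circ f^r\circ\alpha^{-1}=f^r\circ\alpha^{-1}\circ g$. This also gives the final assertion that every periodic dominant correspondence is invariant, once the classification is established.

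For the ``only if'' direction, suppose $\Phi^m(Z)=Z$ with $\Phi=f\times g$. Let $Y\to Z$ be the normalization, with projections $h_1=\pi_1|_Z$ and $h_2=\pi_2|_Z$, both dominant and generically finite. The restriction $\Phi^m|_Z:Z\to Z$ is a surjective finite endomorphism of $Z$ and lifts to a finite endomorphism $u:Y\to Y$. We then have $f^m\circ h_1=h_1\circ u$ and $g^m\circ h_2=h_2\circ u$.

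Applying Theorem~\ref{thm:main-semiconjugacy} to $h_1$, which is a finite morphism from $Y$ since $Z\subset\Pbb^k\times\Pbb^k$ is projective and $h_1$ is generically finite, we get an isomorphism $\mu:Y\to\Pbb^k$ with $h_1=f^s\circ\mu$ and $u=\mu^{-1}f^m\mu$. Finiteness of $h_1$ is not automatic, since the fibers of $\pi_1|_Z$ may be positive-dimensional. We therefore apply the rational version, obtaining a birational $\mu$, and transport everything to $\Pbb^k$ by setting $\psi=h_2\circ\mu^{-1}$. This gives
\[
 g^m\circ\psi=\psi\circ f^m,\qquad Z=\overline{\{(f^s(x),\psi(x))\}}.
\]

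The key remaining step is to understand $\psi$, a dominant rational map satisfying $g^m\circ\psi=\psi\circ f^m$. Degrees give $\deg(\psi)\,d^{m}=d_g^{m}\deg(\psi)$ at the level of topological degrees, so $\deg g=d$. The idea is to express $\psi$ through iterates of $f$ and then use the semiconjugacy in the other direction. I would study the correspondence from the $g$-side by symmetry. Theorem~\ref{thm:main-semiconjugacy} cannot be applied to $h_2$ because $g$ is arbitrary, not simple. Instead, consider the composite correspondence $Z^{t}\circ Z\subset\Pbb^k\times\Pbb^k$, which is periodic under $f\times f$. Each component of it is a periodic correspondence for $f$ with itself, and applying the first reduction to it shows each component is of the form $\overline{\{(f^a\mu'(x),f^b\mu'(x))\}}$. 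Combining this with the above description, I would deduce that $\psi$ has topological degree at most that of $f^s$ and that the fiber partition of $\psi$ is a monodromy-invariant partition. By the decomposition rigidity Theorem~\ref{thm:sc-normal-factors}, the fiber partition of $\psi$ coincides with that of $f^j$. Hence $\psi=\beta\circ f^j$ with $\beta$ birational. Cancelling the common iterate $f^{\min(s,j)}$ in the description of $Z$, which is possible since $Z$ is irreducible and the map $x\mapsto(f^s x,f^j\beta x)$ factors through $f^{\min}$, leaves $Z$ as a graph of $\beta\circ f^r$ or a transposed graph with $\beta$ birational. Then $g^m\circ\beta=\beta\circ f^m$ together with the rigidity statement in Theorem~\ref{thm:main-semiconjugacy} (for $Y=\Pbb^k$ and $u$ a morphism) forces $\beta=\alpha\in\PGL_{k+1}(\Cbb)$ with $g^m=\alpha f^m\alpha^{-1}$.

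The last step, and the one I expect to be the main obstacle, is upgrading $g^m=\alpha f^m\alpha^{-1}$ to $g=\alpha f\alpha^{-1}$. Set $\tilde g=\alpha^{-1}g\alpha$. Then $\tilde g$ commutes with $\tilde g^m=f^m$, so $\tilde g\in C_{\mathrm{rat}}(f^\infty)=\langle f\rangle$ by strong simplicity (Theorem~\ref{thm:main}). Comparing degrees gives $\tilde g=f$, which finishes the proof.
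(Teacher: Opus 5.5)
Most of your outline matches the paper: the ``if'' direction, the normalization $Y$ with the lift $u$, the rational form of Theorem~\ref{thm:main-semiconjugacy} for the first projection, $\psi:=h_2\mu^{-1}$ with $g^m\psi=\psi f^m$, the cancellation and linearity of $\beta$, and the upgrade from $g^m=\alpha f^m\alpha^{-1}$ to $g=\alpha f\alpha^{-1}$ via Theorem~\ref{thm:main}. The gap is the central step. To use Theorem~\ref{thm:sc-normal-factors} you need $f^N=R\psi$ for some $N\geq1$ and some dominant rational $R$, i.e.\ $f^N$ must be constant on general fibers of $\psi$. Without this, the relation $\psi(x)=\psi(x')$ on $T_N(y)$ does not come from a factorization of $f^N$, and Proposition~\ref{prop:sc-monodromy-partitions} says nothing about $\psi$. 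You only assert this (``I would deduce\ldots''), and both claims offered in support are false. Take $g=f$ and $Z=\Gamma_f$. Then $s=0$ and $\psi=f$, so $\deg_{\mathrm{top}}\psi=d^k>1=\deg_{\mathrm{top}}f^s$. Also $Z^t\circ Z=\{(x,x')\mid f(x)=f(x')\}$, and each off-diagonal component is mapped into $\Delta$ by $f\times f$. So it is not periodic, and it is not of the form $\{(f^a z,f^b z)\}$, since every such set is $\Gamma_{f^r}$ or $\Gamma_{f^r}^{\mathrm{op}}$ and hence invariant.

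The missing argument is the paper's Theorem~\ref{thm:rational-dynamical-quotients}, applied to $H=\psi$ and $v=g^m$. It works with the fiber relation of $\psi$ itself rather than with $Z^t\circ Z$. Let $\Ecal_\psi$ be the closure of $\{(x,x')\mid\psi(x)=\psi(x')\}$ taken over a locus where $\psi$ is finite \'etale. The identity $g^m\psi=\psi f^m$ and the finiteness of $f^m\times f^m$ show that each component of $\Ecal_\psi$ is mapped onto a component, so every component is preperiodic, not necessarily periodic. By Lemma~\ref{lem:periodic-self-correspondences}, the periodic components are $\Gamma_{f^r}$ or $\Gamma_{f^r}^{\mathrm{op}}$. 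Containment in $\Ecal_\psi$ then gives $\psi f^r=\psi$, hence $r=0$ by comparing topological degrees. Thus $\Delta$ is the only periodic component, and $(f^{mn}\times f^{mn})(\Ecal_\psi)\subseteq\Delta$ for some $n\geq1$. This says $f^{mn}$ is constant on general fibers of $\psi$. Hence the closure of $\{(\psi(x),f^{mn}(x))\}$ projects birationally onto the target of $\psi$, and $f^{mn}=R\psi$. Only now does Theorem~\ref{thm:sc-normal-factors} give $\psi=\beta f^j$ with $\beta$ birational, after which your remaining steps go through. Minor point: the degree comparison should read $d^{km}=(\deg g)^{km}$, not $d^m=(\deg g)^m$.
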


In particular,for the diagonal $\Delta:=\{(x,x)\mid x\in\Pbb^k\}$, the
classification gives
\[
 \Delta\text{ is preperiodic under }f\times g
 \quad\Longleftrightarrow\quad f=g;
\]
see Corollary~\ref{cor:preperiodic-diagonal}.
This equivalence fails in dimension one. Indeed, every quadratic $f:\Pbb^1\to\Pbb^1$ admits
a nontrivial involution $\sigma\in\PGL_2(\Cbb)$ with
$\sigma^2=\id$ and $f\circ\sigma=f$.
Set $g:=\sigma\circ f\ne f$ and
$\Gamma_\sigma:=\{(x,\sigma(x))\mid x\in\Pbb^1\}$.
Then
\[
 (f\times g)(\Delta)=\Gamma_\sigma,
 \qquad (f\times g)(\Gamma_\sigma)=\Gamma_\sigma,
\]
so $\Delta$ is preperiodic under $f\times g$.

Pakovich's one-dimensional classification
is stated for two general rational maps $f$ and $g$.
To treat arbitrary $g$, we classify dominant rational dynamical
quotients of $f$ (Theorem~\ref{thm:rational-dynamical-quotients}).

\bigskip
We next give an application of Theorem~\ref{thm:main} that was
pointed out to the author by Johan Taflin.

Let $f\in\End_d^k$, and let $X\subset\Pbb^k$ be an irreducible
subvariety. We say that $X$ is \emph{$f$-preperiodic} if
$f^a(X)=f^b(X)$ for some integers $a>b\geq0$.
Following the formulation of DeMarco--Mavraki
\cite{DeMarcoMavrakiPreperiodic} of the notion introduced by
Ghioca--Tucker~\cite{GhiocaTucker2021}, we say that $X$ is
\emph{$f$-special} if there exist an irreducible subvariety
$Z\subset\Pbb^k$ containing $X$, an integer $n\geq1$, and a
polarizable endomorphism $\psi:Z\to Z$ such that
\[
 f^n(Z)=Z,\qquad
 \left.f^n\right|_Z\circ\psi=\psi\circ\left.f^n\right|_Z,
 \qquad X\text{ is }\psi\text{-preperiodic}.
\]
Here polarizability means that there exist an ample line bundle $L$
on $Z$ and an integer $q>1$ such that
$\psi^*L\simeq L^{\otimes q}$.

\begin{cor}\label{cor:special-preperiodic}
Let $f\in U_{d,k}(\Cbb)$, where $U_{d,k}$ is the dense Zariski open
subset supplied by Theorem~\ref{thm:main}. Then an irreducible
hypersurface $X\subset\Pbb^k$ is $f$-special if and only if it is
$f$-preperiodic.
\end{cor}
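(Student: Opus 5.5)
Preperiodic implies special: if $f^a(X)=f^b(X)$ with $a>b$, take $Z=\Pbb^k$, $n=1$ and $\psi=f$. Then $\psi$ is polarizable, since $f^*\Ocal(1)\simeq\Ocal(d)$ with $d\ge2$. It commutes with $f$, and $X$ is $\psi$-preperiodic. The work is in the converse.

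Suppose $X$ is $f$-special, with data $Z\supset X$, $n\ge1$ and a polarizable $\psi:Z\to Z$. Since $X$ is a hypersurface, either $Z=X$ or $Z=\Pbb^k$.

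\emph{Case $Z=X$.} Then $f^n(X)=X$, so $X$ is periodic and in particular preperiodic. Nothing more is needed.

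\emph{Case $Z=\Pbb^k$.} Then $\psi$ is an endomorphism of $\Pbb^k$ commuting with $f^n$. Hence $\psi\in C_{\mathrm{rat}}(f^\infty)$. By Theorem~\ref{thm:main}, $\psi=f^s$ for some $s\ge0$. Polarizability with $q>1$ excludes $s=0$, because $\id^*L\simeq L$ cannot be isomorphic to $L^{\otimes q}$ for $L$ ample and $q>1$. So $s\ge1$. Since $X$ is $\psi$-preperiodic, $f^{sa}(X)=f^{sb}(X)$ for some $a>b\ge0$. Because $sa>sb$, this shows $X$ is $f$-preperiodic.

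One point needs checking: that Theorem~\ref{thm:main} applies to $\psi$. Here $\psi$ is a morphism of $\Pbb^k$, hence a dominant rational self-map (it is finite, by polarizability), so it lies in $\operatorname{Rat}(\Pbb^k)$. The identity $\psi\circ f^n=f^n\circ\psi$ places it in $C_{\mathrm{rat}}(f^n)$. With that, the only substantive input is Theorem~\ref{thm:main}, and the remaining steps are bookkeeping.
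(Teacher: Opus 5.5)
Your proof is correct and follows the paper's argument: the dichotomy $Z=X$ or $Z=\Pbb^k$, periodicity in the first case, and Theorem~\ref{thm:main} forcing $\psi=f^s$ in the second. Two steps the paper leaves implicit are made explicit in your version: ruling out $s=0$ via polarizability with $q>1$, and noting that $\psi$ is finite, hence dominant, so it lies in $C_{\mathrm{rat}}(f^n)$.
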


\begin{proof}
Every $f$-preperiodic subvariety is $f$-special: one may take
$Z=\Pbb^k$, $\psi=f$, and $n=1$.
Conversely, suppose that an irreducible hypersurface $X\subset\Pbb^k$
is $f$-special, and choose $Z,\psi,n$ as in the definition.
Since $X\subset Z\subset\Pbb^k$ and $\dim X=k-1$, either $Z=X$
or $Z=\Pbb^k$. In the first case, $f^n(X)=X$, so $X$ is
$f$-periodic. In the second case, $\psi$ is an endomorphism of
$\Pbb^k$ commuting with $f^n$. By Theorem~\ref{thm:main},
$\psi=f^r$ for some $r\geq1$. Thus preperiodicity under $\psi$
implies preperiodicity under $f$.
\end{proof}

For such $f$, this shows that the conjecture of DeMarco--Mavraki
\cite[Conjecture~4.1]{DeMarcoMavrakiPreperiodic} and that of Cantat
and DeMarco--Mavraki \cite[Conjecture~2.7]{DeMarcoCIRMLectures}
are equivalent for hypersurfaces. For further details, we refer the
reader to the recent work of Gauthier--Taflin~\cite{GauthierTaflin2026},
where the latter conjecture is proved for endomorphisms of $\Pbb^2$.

\subsection{Further context and related work}
A classical rigidity problem in dynamics asks whether a typical
system has the smallest possible centralizer. Smale explicitly
formulated this question for $C^r$-diffeomorphisms
\cite{Smale1998}. For a closed smooth manifold $M$, he asked
whether, for every $r\geq1$, the set of
$T\in\operatorname{Diff}^r(M)$ whose centralizer is
$\langle T\rangle=\{T^n\mid n\in\Zbb\}$ is dense in the $C^r$
topology. Bonatti, Crovisier, and Wilkinson proved the
$C^1$-generic (in the Baire sense) form of this statement: if $M$ is
connected, such diffeomorphisms form a residual subset of
$\operatorname{Diff}^1(M)$
\cite{BonattiCrovisierWilkinson2009}.

For rational maps of the projective line, the study of commuting maps
goes back to Julia~\cite{Julia1922}, Fatou~\cite{Fatou1923}, and
Ritt~\cite{Ritt1923}. Ritt proved that two commuting rational maps of
degree at least two either have a common iterate or are both
\emph{special}~\cite{Ritt1923}. Here a rational map $F$ of degree
$d\geq2$ is called special if it is conjugate to $z\mapsto z^{\pm d}$
or to a Chebyshev map $\pm T_d$, or if it is a Latt\`es map
\cite{MilnorLattes}; otherwise it is called \emph{nonspecial}.
Eremenko later gave a new proof of this theorem using modern methods
of complex dynamics~\cite{Eremenko1990}. For a nonspecial rational
map $F$, Pakovich proved that the centralizer $C_{\mathrm{rat}}(F)$ is virtually
cyclic: there exist finitely many $X_1,\ldots,X_s\in C_{\mathrm{rat}}(F)$ such that
$C_{\mathrm{rat}}(F)=\{X_i\circ F^n\mid 1\leq i\leq s,\ n\geq0\}$
\cite[Theorem~1.2]{Pak20}; see \cite{Pak21} for a finer structural
description. Beaumont recently extended virtual cyclicity to the
iterated centralizer: $C_{\mathrm{rat}}(F^\infty)$ is virtually cyclic for every
nonlinear nonspecial rational map, and
$C_{\mathrm{rat}}(F^\infty)=C_{\mathrm{rat}}(F^N)$ for some $N\geq1$ whenever $F$ is not conjugate
to a power map
\cite[Theorem~1.3 and Corollary~1.4]{beaumont2025centralizersendomorphismsprojectiveline}.

In higher dimensions, Dinh--Sibony proved that commuting endomorphisms
$f,g$ of $\Pbb^k$ with degrees at least two and
$(\deg f)^a\neq(\deg g)^b$ for all $a,b\geq1$ are induced by affine
maps on a common orbifold; in particular, both maps are PCF
\cite[Theorem~1.1, Corollaire~1.3]{DinhSibony2002}.
In dimension two, Kaufmann classified the complementary case in which
the degrees are multiplicatively dependent but the two maps have no
common iterate: the pair consists either of Latt\`es endomorphisms of
$\Pbb^2$ or of homogeneous polynomial endomorphisms of $\Cbb^2$
inducing Latt\`es maps on the line at infinity
\cite[Theorem~1.1]{Kaufmann2018}.
See also Dinh's work~\cite{DinhCommuting}.
Together, these results imply that commuting endomorphisms of
$\Pbb^2$ of degrees at least two without a common iterate are PCF.
Silverman studied the affine automorphism groups of the
two-dimensional folding maps; the $A_2$- and $B_2$-families yield
explicit regular polynomial endomorphisms $F_n$ satisfying
$C_{\mathrm{rat}}(F_n)\supsetneq\langle F_n\rangle$
\cite{SilvermanCommutingAffinePlane}.

More recently, the author used measure rigidity and the image--fiber
principle to study iterated centralizers of regular polynomial
endomorphisms of $\Cbb^2$. In that result, centralizers are taken
among regular polynomial endomorphisms. If $f$ is not affinely
conjugate to a homogeneous map and the endomorphism $f_\Pi$ induced
on the line at infinity is not conjugate to a power map, then its
iterated centralizer stabilizes at the centralizer of $f^N$ for some
$N\geq1$; if, in addition, $f_\Pi$ is nonspecial, then the iterated
centralizer is of the form $\mathcal F\circ\langle f\rangle$ for a finite
set $\mathcal F$ \cite[Theorem~1.8]{ZhangMeasureRigidity}.
We also use the generic measure rigidity theorem of that work
\cite[Theorem~1.4]{ZhangMeasureRigidity} to prove
Theorem~\ref{thm:regular-polynomial-centralizer}.

\subsection{Outline of the proof of Theorem~\ref{thm:main-semiconjugacy}}
\label{subsec:strategy}
Let $\Rcal_f$ be the ramification divisor of $f$ and $R_f$ its support.
We regard $\Rcal_f$ as a closed subscheme.
Put $B_f:=f(R_f)$. We say that $f$ is simple if it satisfies the following four
conditions (Definition~\ref{def:simple}):
\begin{enumerate}
\item[\textup{(S1)}] $\Rcal_f$ is integral;
\item[\textup{(S2)}] $f|_{R_f}:R_f\to B_f$ is birational;
\item[\textup{(S3)}]
$B_f\subsetneq f^{-1}\bigl(f(B_f)\bigr)$;
\item[\textup{(S4)}] $f$ is not postcritically finite (i.e.,
$\bigcup_{i\geq1}f^i(R_f)$ is not Zariski closed).
\end{enumerate}
Proposition~\ref{prop:simple-open} shows that these conditions hold
on a dense Zariski open subset defined over $\Qbb$.
This definition is inspired by the work of
Ingram--Ramadas--Silverman~\cite{pcfsilverman}.

For the remainder of this subsection, we assume that $f$ is simple.
Fix an integer $m\geq1$. For $i\geq0$, put $B_i:=f^i(B_f)$, and set
$D_m:=B_0\cup\cdots\cup B_{m-1}$ and
$U_m:=\Pbb^k\setminus D_m$. Fix $y\in U_m$. The vertices at depth
$s$ of the rooted preimage tree are $T_s(y):=f^{-s}(y)$; see
Definition~\ref{def:tree}. The group $G_m(y)$ denotes the monodromy
group acting on its leaves $T_m(y)$. By \textup{(S1)} and
\textup{(S2)}, Lemma~\ref{lem:generic-fold} shows that, at a general
point of $R_f$, the map $f$ is analytically a fold
$(x_1,\ldots,x_k)\mapsto(x_1,\ldots,x_{k-1},x_k^2)$; in particular,
its local monodromy is a transposition.

\begin{figure}[!htbp]
  \centering
  \resizebox{0.8\textwidth}{!}{%
  \begin{tikzpicture}[
    every node/.style={font=\small},
    blu/.style={circle,draw=blue,fill=blue!6,text=blue,
      minimum size=8mm,inner sep=0pt},
    gry/.style={circle,draw=gray,fill=gray!20,
      minimum size=8mm,inner sep=0pt},
    edge/.style={draw=black,thin},
    pair/.style={<->,>={Stealth[length=5pt]},red,thin}
  ]
    \node[gry,fill=white,draw=black] (y) at (-0.5,0) {$y$};

    \node[blu] (v)  at (-4,-2) {$v$};
    \node[gry] (a1) at (-0.5,-2) {};
    \node[gry] (a2) at ( 1.5,-2) {};
    \node[gry] (a3) at ( 3.5,-2) {};

    \node[blu] (up) at (-4,-4) {$u_+$};
    \node[gry] (b1) at (-1.5,-4) {};
    \node[gry] (b2) at ( 1.5,-4) {};
    \node[blu] (um) at ( 4,-4) {$u_-$};

    \foreach \i/\x in {1/-7,2/-5,3/-3,4/-1}
      \node[blu] (p\i) at (\x,-6.5) {$x_\i^+$};
    \foreach \i/\x in {1/1,2/3,3/5,4/7}
      \node[blu] (m\i) at (\x,-6.5) {$x_\i^-$};

    \foreach \t in {v,a1,a2,a3} \draw[edge] (y) -- (\t);
    \foreach \t in {up,b1,b2,um} \draw[edge] (v) -- (\t);
    \foreach \i in {1,2,3,4} \draw[edge] (up) -- (p\i);
    \foreach \i in {1,2,3,4} \draw[edge] (um) -- (m\i);

    \draw[pair] (up.north) to[bend left=18] (um.north);
    \foreach \i/\b in {1/22,2/32,3/42,4/52}
      \draw[pair] (p\i.south) to[bend right=\b] (m\i.south);

    \node[anchor=east] at (-8.3,0) {depth $0$};
    \node[anchor=east] at (-8.3,-2) {depth $1$};
    \node[anchor=east] at (-8.3,-4) {depth $m-j=2$};
    \node[anchor=east] at (-8.3,-6.5) {depth $m=3$};

    \node at (-0.5,-10) {$
      (x_1^+\,x_1^-)(x_2^+\,x_2^-)
      (x_3^+\,x_3^-)(x_4^+\,x_4^-)$};
  \end{tikzpicture}%
  }
  \caption{Monodromy around $B_1$ for $j=1$, $m=3$, and $k=d=2$.
  Red arrows indicate exchanged vertices. The permutation below the
  tree describes the action on the highlighted leaves; only one
  affected subtree is shown.}
  \label{fig:meridian-j1-m3}
\end{figure}

Here a \emph{meridian} around a branch component means a based loop
obtained by going from $y$ to a small transverse disk at a general
smooth point of the component, circling it once positively, and
returning to $y$; see Definition~\ref{def:meridian}.

Fix $0\leq j<m$ and put $r:=m-j-1$. Around a general point of the
branch component $B_r$, the monodromy of a meridian on $T_m(y)$ is a
product of disjoint transpositions: at each relevant ramification
point, the factorization
\[
 f^m=f^r\circ f\circ f^j
\]
is locally \'etale--fold--\'etale by \textup{(S4)}; see
Lemmas~\ref{lem:ram-level-separation} and~\ref{lem:generic-fold}.
The two leaves exchanged by each transposition first coalesce
after $j+1$ iterations:
\[
 f^{j+1}(x)=f^{j+1}(x'),\qquad f^j(x)\ne f^j(x').
\]
As $j$ varies, the monodromy permutations induced by these meridians
normally generate $G_m(y)$.
Figure~\ref{fig:meridian-j1-m3} illustrates this monodromy action
for $j=1$ and $m=3$.

The key point is to control how these permutations act
throughout the tree. At depth one, $G_1(y)=S_{d^k}$.
For $m\geq2$, we study the kernel
\[
 K_m:=\ker\bigl(G_m(y)\to G_1(y)\bigr).
\]
Condition \textup{(S3)} provides elements of $K_m$ that move leaves
below some vertices of $T_1(y)$ and fix all leaves below the others.
Using these elements and their conjugates, we prove by induction
that, for any distinct $v,w\in T_1(y)$, $K_m$ acts transitively on
pairs consisting of one leaf below $v$ and one below $w$
(Lemma~\ref{lem:two-coordinate-extraction}). As a consequence, each set
\[
 \Omega_j:=\bigl\{\{x,x'\}\subset T_m(y)\mid
 f^j(x)=f^j(x'),\quad f^{j-1}(x)\ne f^{j-1}(x')\bigr\},
 \qquad 1\leq j\leq m,
\]
is a single $G_m(y)$-orbit.

This gives the classification of invariant partitions in
Proposition~\ref{prop:sc-monodromy-partitions}: for every
$G_m(y)$-invariant equivalence relation $\sim$ on $T_m(y)$, there
is a unique integer $0\leq j\leq m$ such that
\[
 x\sim x'\quad\Longleftrightarrow\quad f^j(x)=f^j(x').
\]
To see this, suppose that $\sim$ is not the equality relation,
and choose the largest $j$ such that $x\sim x'$ for some
$\{x,x'\}\in\Omega_j$. Since $\Omega_j$ is a single
orbit, invariance implies that every pair in $\Omega_j$ is in this
relation. Transitivity then gives $x\sim x'$ whenever
$f^j(x)=f^j(x')$. Conversely, if $f^j(x)\ne f^j(x')$, then
$\{x,x'\}\in\Omega_t$ for some $t>j$, so $x\not\sim x'$ by
the maximality of $j$.

This classification
gives the higher-dimensional decomposition theorem
(Theorem~\ref{thm:sc-normal-factors}). Suppose that
\[
 f^m=P\circ Q,\qquad Q:\Pbb^k\dashrightarrow Z,
 \qquad P:Z\dashrightarrow\Pbb^k,
\]
where $Z$ is normal, integral and projective and both maps are
dominant rational. Then there are an integer $0\leq j\leq m$ and
a birational map $\alpha:Z\dashrightarrow\Pbb^k$ such that
\[
 Q=\alpha^{-1}\circ f^j,\qquad P=f^{m-j}\circ\alpha.
\]
If both factors are finite, then $\alpha$ is an isomorphism.

To prove this, we first restrict to suitable open subsets to obtain
a factorization of finite \'etale covers. The relation
\[
 x\sim_Q x'\quad\Longleftrightarrow\quad Q(x)=Q(x')
\]
on $T_m(y)$ is therefore monodromy-invariant.
Proposition~\ref{prop:sc-monodromy-partitions} gives an integer
$0\leq j\leq m$, independent of the general base point $y$, such that
\[
 Q(x)=Q(x')\quad\Longleftrightarrow\quad f^j(x)=f^j(x').
\]
Consequently, $Q$ and $f^j$ have the same general fibers and the
conclusion follows.

We now deduce Theorem~\ref{thm:main-semiconjugacy} from this
decomposition theorem.
By Stein factorization, we may suppose that $h$ is finite.
Write $F=f^m$, so that $F\circ h=h\circ u$.
The map $(h,u)$ takes $Y$ into the fiber product
$\Pbb^k\times_{\Pbb^k}Y$ formed using $F$ and $h$.
Normalize its image to obtain finite maps
\[
 p:Y\to Y_1,\qquad h_1:Y_1\to\Pbb^k,
 \qquad v:Y_1\to Y,
\]
with $h=h_1\circ p$ and $u=v\circ p$.

If $p$ is birational, a comparison of branch loci in the fiber
product shows that the branch locus $B_h$ of $h$ is totally invariant under $F$.
Simplicity excludes nonempty totally invariant hypersurfaces, so
$h$ is finite \'etale and hence an isomorphism.
Otherwise, setting $u_1=p\circ v$ gives
\[
 F\circ h_1=h_1\circ u_1,\qquad
 \deg_{\mathrm{top}}h_1<\deg_{\mathrm{top}}h.
\]
Induction on $\deg_{\mathrm{top}}h$, together with
Theorem~\ref{thm:sc-normal-factors}, gives $h=f^s\circ\mu$ and $u=\mu^{-1}\circ F\circ\mu$.

\medskip
The action on the rooted tree considered here is related to the
theory of iterated monodromy groups; see
Nekrashevych~\cite{Nekrashevych2005,Nekrashevych2011,Nekrashevych2014Models}
for the general theory and
Bartholdi--Nekrashevych~\cite{BartholdiNekrashevych2006,BartholdiNekrashevych2008}
for applications to PCF polynomials in one variable.
In the PCF setting, the branch loci of all iterates lie in a fixed
postcritical locus, so the finite-level actions fit together into
a self-similar action on the infinite preimage tree.

In parameter space, Gorbovickis--Taflin~\cite{GT24} use monodromy
to prove the irreducibility of spaces of endomorphisms with marked
periodic points and associated eigendirections. This irreducibility
is a key ingredient in their proof of the algebraic independence
of multiplier functions.

\subsection*{Organization}
Section~\ref{sec:simple-locus} establishes the generic ramification
properties and constructs the simple locus.
Section~\ref{sec:finite-level-monodromy} studies finite-level
monodromy and its invariant partitions.
Section~\ref{sec:semiconjugacy-rigidity} proves decomposition and
semiconjugacy rigidity, including Theorem~\ref{thm:main-semiconjugacy}.
Section~\ref{sect:proof} combines this with generic symmetry
rigidity to prove Theorem~\ref{thm:main} and treats regular polynomial
endomorphisms to prove Theorem~\ref{thm:regular-polynomial-centralizer}.
Finally, Section~\ref{sec:periodic-correspondences} classifies
periodic dominant correspondences and rational dynamical quotients
and proves Theorem~\ref{thm:periodic-correspondences}.

\subsection*{Convention}
We write compositions multiplicatively, so that $fg$ means $f\circ g$.

A property is said to hold for a general point, or generically,
on a variety if it holds on a dense Zariski open subset.

\subsection*{Acknowledgements}
I would like to thank Johan Taflin for sharing his insights on these
questions.

\section{Generic ramification and simple endomorphisms}
\label{sec:simple-locus}

The purpose of this section is to construct a dense Zariski open subset
$U_{\mathrm{simp}}\subset\End_d^k$, defined over $\Qbb$, whose complex
points are simple endomorphisms; see Proposition~\ref{prop:simple-open}.

Let $f:\Pbb^k\to \Pbb^k$ be an endomorphism of degree $d$. We write $\Rcal_f$
for the \emph{ramification divisor} (with multiplicities), so that
$K_{\Pbb^k}\sim f^*K_{\Pbb^k}+\Rcal_f$. Let $H$ be a hyperplane of $\Pbb^k$; then
$\Rcal_f\sim (k+1)(d-1)H$, where $\sim$ means linear equivalence. In particular, it has degree $(k+1)(d-1)$.
We write $R_f:=\Supp(\Rcal_f)$ for its support.

We write $\Bcal_f:=f_*\Rcal_f$ for the \emph{branch cycle} and
$B_f:=\Supp(\Bcal_f)=f(R_f)$ for its support, the \emph{branch
hypersurface}. By the projection formula,
\[
 \deg\Bcal_f
 =\Rcal_f\cdot(f^*H)^{k-1}
 =(k+1)(d-1)d^{k-1}=:M.
\]
Since $k\geq2$, put
$q:=M/d=(k+1)(d-1)d^{k-2}$. Unless otherwise specified, inverse
images of subvarieties are understood set-theoretically; pullbacks of
divisors with multiplicities are denoted by $f^*$.

The \emph{postcritical set} of $f$ is
\[
        \operatorname{PC}(f):=\bigcup_{m\geq1}f^m(R_f).
\]
We say that $f$ is \emph{postcritically finite} (PCF) if
$\operatorname{PC}(f)$ is Zariski closed.

\begin{defi}[\emph{Simple endomorphism}]\label{def:simple}
An endomorphism $f\in\End_d^k$ is called \emph{simple} if it satisfies
the following conditions:
\begin{enumerate}
\item[\textup{(S1)}] $\Rcal_f$ is integral;
\item[\textup{(S2)}] the morphism $f|_{R_f}:R_f\to B_f$ is birational;
\item[\textup{(S3)}] the branch hypersurface satisfies $B_f\subsetneq f^{-1}\bigl(f(B_f)\bigr)$;
\item[\textup{(S4)}] $f$ is not postcritically finite.
\end{enumerate}
\end{defi}

For $d\geq3$, the genericity of \textup{(S1)}, \textup{(S2)}, and
\textup{(S3)} follows from Ingram--Ramadas--Silverman's
arguments~\cite{pcfsilverman}. Here we use
Olechnowicz--Weinreich's theorem for \textup{(S1)} and give proofs
of \textup{(S2)} and \textup{(S3)} that also cover the quadratic case.
Ingram--Ramadas--Silverman also conjectured that \textup{(S4)} holds generically
\cite{pcfsilverman}; this was subsequently proved by
Gauthier--Taflin--Vigny~\cite{GauthierTaflinVignyPCF}.

\begin{lem}\label{lem:ramification-integral}
The locus
\[
 \Ucal_1:=\{f\in\End_d^k\mid\Rcal_f\text{ is integral}\}
\]
is dense Zariski open and defined over $\Qbb$.
\end{lem}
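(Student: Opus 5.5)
The plan has two parts: show that $\Ucal_1$ is Zariski open and defined over $\Qbb$, and show that it is nonempty. Density then follows because $\End_d^k$ is irreducible, being an open subset of $\Pbb^{N_{d,k}}$.

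For openness, I write the ramification divisor in families. Over $\End_d^k$, with $f=[F_0:\cdots:F_k]$, the divisor $\Rcal_f$ is cut out by the Jacobian determinant
\[
 J_f:=\det\bigl(\partial F_i/\partial X_j\bigr)_{0\leq i,j\leq k}.
\]
This is a form of degree $(k+1)(d-1)$ whose coefficients are polynomials with integer coefficients in the coefficients of $f$. It is never identically zero for an endomorphism, since $f$ is finite and hence separable in characteristic zero. We therefore get a morphism
\[
 \End_d^k\to\Pbb\bigl(H^0(\Pbb^k,\Ocal(D))\bigr),\qquad D=(k+1)(d-1),
\]
defined over $\Qbb$.

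For a hypersurface, being integral means that the defining form is irreducible and reduced, which here is the same as irreducible. The locus of reducible forms of degree $D$ is the union over $1\leq a\leq D-1$ of the images of the multiplication maps
\[
 \Pbb(H^0(\Ocal(a)))\times\Pbb(H^0(\Ocal(D-a)))\to\Pbb(H^0(\Ocal(D))).
\]
Each image is closed because the source is projective, and each map is defined over $\Qbb$. Pulling back the complement of this closed set along the Jacobian map shows that $\Ucal_1$ is Zariski open and defined over $\Qbb$.

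The main obstacle is nonemptiness: we need one endomorphism of degree $d$ whose Jacobian determinant is irreducible. The paper indicates that this is exactly the theorem of Olechnowicz--Weinreich, which I would cite directly.

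If I had to argue directly, I would look for an explicit family. Maps like $[X_0^d+\ell_0:\cdots]$ built from power maps are useless at first order, because the Jacobian of a power map is the reducible form $\prod X_i^{d-1}$. Instead I would degenerate differently. One route is to check that the ramification divisor of a general member of a linear system of maps is irreducible, via a Bertini-type argument on the incidence variety
\[
 \{(f,x)\mid J_f(x)=0\}.
\]
This incidence variety is irreducible: for fixed $x$, the condition $J_f(x)=0$ is a single nontrivial equation in $f$, giving an irreducible determinantal condition, since the map $f\mapsto(\text{value and derivative data of }f\text{ at }x)$ is linear and surjective onto jets. With the incidence variety irreducible, irreducibility of the general fiber $\Rcal_f$ follows by a monodromy/Stein-factorization argument, provided one also rules out generic non-reducedness. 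Non-reducedness can be ruled out by exhibiting a single $f$ and a point where $J_f$ vanishes to order exactly one.

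I would present the citation as the proof and mention the incidence argument only as a remark.
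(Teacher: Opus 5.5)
Your proof is correct. It differs from the paper's mainly in how openness and definability over $\Qbb$ are obtained.

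\begin{itemize}
\item \emph{The paper's route.} It takes both nonemptiness and openness of $\Ucal_1$ from Olechnowicz--Weinreich. It then gets definability over $\Qbb$ from Galois invariance, $\Rcal_{f^\sigma}=(\Rcal_f)^\sigma$ for $\sigma\in\Aut(\Cbb/\Qbb)$, using implicitly that an $\Aut(\Cbb/\Qbb)$-stable Zariski closed subset descends to $\Qbb$.
\item \emph{Your route.} You prove openness and $\Qbb$-rationality directly. $\Ucal_1$ is the preimage, under the $\Qbb$-morphism $f\mapsto[J_f]$, of the complement of the reducible locus. That locus is a finite union of images of proper $\Qbb$-morphisms. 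It also contains the non-reduced forms $g^2h$, so integrality of $\Rcal_f$ is exactly irreducibility of $J_f$, as you say.
\end{itemize}

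What your version buys is that the external input shrinks to the one genuinely hard statement: some degree-$d$ endomorphism has irreducible Jacobian determinant. It also gives an explicit description of $\Ucal_1$ without appealing to descent. The paper's version is shorter but relies on the cited theorem for openness as well.

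Your optional incidence-variety remark would not prove nonemptiness as sketched. Irreducibility of $\{(f,x)\mid J_f(x)=0\}$ does not imply that a general fiber is irreducible. For example, $\{(t,[x:y:z])\in\Cbb\times\Pbb^2\mid x^2=ty^2\}$ is irreducible, since $x^2-ty^2$ is irreducible in $\Cbb[t,x,y,z]$, yet every fiber with $t\neq0$ is a pair of lines. What you would need is geometric irreducibility of the generic fiber. Equivalently, the function field of $\End_d^k$ must be algebraically closed in that of the incidence variety, i.e.\ the monodromy must not permute components. That is exactly the step your sketch leaves open and what the cited theorem supplies. Keep the citation as the proof, and either drop the remark or label it clearly as heuristic.
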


\begin{proof}
By \cite[Theorem~1.1]{OlechnowiczWeinreich2026}, the locus $\Ucal_1$
is nonempty and Zariski open, hence dense.
For every $\sigma\in\Aut(\Cbb/\Qbb)$, one has
$\Rcal_{f^\sigma}=(\Rcal_f)^\sigma$.
Thus $\Ucal_1$ is invariant under $\Aut(\Cbb/\Qbb)$, hence defined
over $\Qbb$.
\end{proof}

\begin{rem}
\label{rem:generic-ramification-literature}
For $d\geq3$, Ingram--Ramadas--Silverman~\cite[Theorem~14]{pcfsilverman}
proved that $\Rcal_f$ is of general type for a general
$f\in\End_d^k$.
Luo--Meng~\cite[Theorem~1.1(2)]{LuoMeng2026} showed that $\Rcal_f$
is singular for every $f$ when $k\geq4$. They also proved that the
pair $(\Pbb^k,\Rcal_f)$ is log canonical for a general $f$ in every
dimension~\cite[Theorem~1.1(3)]{LuoMeng2026}.

We can in fact show that $\dim\Sing(\Rcal_f)\leq k-4$ for a general
$f\in\End_d^k$.
\end{rem}

\begin{lem}
\label{lem:ramification-birational}
There exists a dense Zariski open subset
$\Ucal_2\subset \Ucal_1$, defined over $\Qbb$,
such that, for every $f\in \Ucal_2(\Cbb)$, the morphism
$f|_{R_f}:R_f\to B_f$ is birational.
\end{lem}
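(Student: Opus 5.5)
The plan is to show that the birationality condition is Zariski open in $\Ucal_1$, defined over $\Qbb$, and nonempty. Openness and rationality are handled by a standard family argument, and nonemptiness by one explicit example.

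\textbf{Openness.} Consider the universal family over $\Ucal_1$. Let $\mathcal R\subset\Ucal_1\times\Pbb^k$ be the universal ramification divisor. It is cut out by the Jacobian determinant of the $k+1$ defining forms, so it is defined over $\Qbb$, and its fibres are integral by the choice of $\Ucal_1$. Let $\mathcal B\subset\Ucal_1\times\Pbb^k$ be the image of $\mathcal R$ under $F:(f,x)\mapsto(f,f(x))$. The map $F|_{\mathcal R}:\mathcal R\to\mathcal B$ is finite and surjective over $\Ucal_1$.

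Since $R_f$ is integral, $f|_{R_f}$ is birational if and only if its degree onto $B_f$ equals $1$. This holds if and only if $\deg B_f=M$, where $M$ is computed with $B_f$ taken reduced: indeed $f_*R_f=\deg(f|_{R_f})\cdot B_f$, and the projection formula gives $\deg f_*R_f=M$. Equivalently, the generic fibre cardinality of $\mathcal R_f\to\mathcal B_f$ is $1$.

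I would use upper semicontinuity to make this open. The function $f\mapsto\deg(f|_{R_f})$ is upper semicontinuous in flat finite families. One way to see this: the locus of points of $\mathcal R$ whose $F$-fibre inside $\mathcal R$ has length $\ge2$ is closed, and degree $\ge2$ for a given $f$ means this locus contains all of $R_f$. The set of $f$ for which the fibre of that closed locus has dimension $k-1$ is closed, by upper semicontinuity of fibre dimension (Chevalley). Its complement $\Ucal_2$ is therefore Zariski open.

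\textbf{Rationality over $\Qbb$.} The construction commutes with $\Aut(\Cbb/\Qbb)$, exactly as in Lemma~\ref{lem:ramification-integral}. So $\Ucal_2$ is Galois-invariant and hence defined over $\Qbb$.

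\textbf{Nonemptiness.} Since $\End_d^k$ is irreducible, an open subset is dense once it is nonempty. So it suffices to exhibit one $f\in\Ucal_1$ with $f|_{R_f}$ birational. I expect this to be the main obstacle.

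The first route to try is a degeneration or deformation argument at a general point. Take a general point $p\in R_f$ with image $q=f(p)$. Birationality fails only if every point of $B_f$ has at least two preimages on $R_f$. To rule this out, perturb $f$ by adding $\epsilon\,\ell^{\,d}$ in one coordinate, with $\ell$ a linear form vanishing at a second ramification point $p'$ over $q$ but not at $p$. The perturbation moves $f(p')$ relative to $f(p)$ to first order. This shows the incidence variety
\[
\{(f,p,p') : p\ne p'\in R_f,\ f(p)=f(p')\}
\]
has dimension at most $\dim\End_d^k+k-2$. Hence for general $f$ the pairs $(p,p')$ cover a locus of dimension less than $k-1$ in $R_f$, giving degree $1$.

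Making this first-order transversality rigorous is the key computation. One must show that the tangent conditions $f(p)=f(p')$ impose $k$ independent conditions on the parameters, jointly with $p,p'$ remaining ramification points. This should follow from the fact that homogeneous forms of degree $d\ge2$ separate two points together with their first-order jets. If this computation gets messy, the fallback is to exhibit an explicit example directly, for instance $[X_0^d+\epsilon X_1^d:\cdots]$ with $\epsilon$ small, and check the branch degree equals $M$.
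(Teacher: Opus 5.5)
Your openness argument is correct. You use upper semicontinuity of fibre length for the finite map $\mathcal R\to\Ucal_1\times\Pbb^k$, then upper semicontinuity of fibre dimension for the proper map from the closed locus $\Lambda$ to $\Ucal_1$. It shows that the birational locus in $\Ucal_1$ is exactly open, which the paper never needs: it simply removes the closure of a constructible bad set. The Galois argument for $\Qbb$-rationality is also fine. All the content of the lemma lies in nonemptiness, and there the proposal has a genuine gap, precisely in the quadratic case.

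\textbf{The gap.} Your first route is the same incidence count as the paper's. Over a fixed pair $p\neq p'$, you need $f(p)=f(p')$, $p\in R_f$ and $p'\in R_f$ to cut out codimension $k+2$ in $\End_d^k$. You justify this by saying that degree-$d$ forms separate two points together with their first-order jets. For $d\geq3$ this is true, and your argument then works. For $d=2$ it is false. On the line through $p,p'$, binary quadrics form a $3$-dimensional space, while two $1$-jets impose $4$ conditions.

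Concretely, take $p=[1:0:\cdots:0]$ and $p'=[0:1:0:\cdots:0]$. The coefficient of $X_0X_1$ in $F_i$ governs both $\partial F_i/\partial X_1$ at $p$ and $\partial F_i/\partial X_0$ at $p'$. After imposing $f(p)=f(p')$, the two differentials therefore have the shape $[\,c\mid A\,]$ and $[\,c\mid B\,]$ with a common column $c$. They cannot be varied independently, and it is not automatic that $\det df_p=\det df_{p'}=0$ has codimension two.

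\textbf{What is missing.} The step the paper supplies is this. On $\{c\neq0\}$, the two determinants are nonzero polynomials in the disjoint variable blocks $A$ and $B$, so they cut out codimension two fibrewise over $c$. The locus $\{c=0\}$ already has codimension $k\geq2$. Without this, or an equivalent argument, nonemptiness of $\Ucal_2$ is not established for $d=2$.

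\textbf{The fallback does not help.} Any explicit example must lie in $\Ucal_1$, i.e.\ have an integral ramification divisor, and this is hard to verify by hand. The natural test maps, such as the power map, have reducible ramification. Moreover, $[X_0^d+\epsilon X_1^d:\cdots]$ is not an actual specified map.
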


\begin{proof}
Consider the incidence
\[
\widetilde I_{\mathrm{bir}}
:=
\left\{
(f,p,q,y)\in
 \End_d^k\times
\bigl(\Pbb^k\times\Pbb^k\setminus\Delta\bigr)
\times\Pbb^k
\ \middle|\
f(p)=f(q)=y,\quad p,q\in R_f
\right\}.
\]
We first estimate the fibers of the projection
\[
\widetilde I_{\mathrm{bir}}
\longrightarrow
\bigl(\Pbb^k\times\Pbb^k\setminus\Delta\bigr)
\times\Pbb^k.
\]

Fix $(p,q,y)$. After linear changes of coordinates in the source
and target, we may assume $p=[1:0:\cdots:0]$,
$q=[0:1:0:\cdots:0]$, and $y=[1:0:\cdots:0]$. Write
$f=[F_0:\cdots:F_k]$. The conditions $f(p)=f(q)=y$ are
$F_i(p)=F_i(q)=0$ for $i=1,\ldots,k$.
They are $2k$ independent linear conditions on the coefficients of
$f$, since they independently fix the coefficients of $X_0^d$
and $X_1^d$ in $F_1,\ldots,F_k$. Moreover, $F_0(p)$ and
$F_0(q)$ are nonzero.

Suppose first that $d\geq3$. The first derivatives of $F_i/F_0$
at $p$ are controlled by the coefficients of $X_0^{d-1}X_j$, $1\leq j\leq k$,
whereas the first derivatives at $q$ are controlled by the
coefficients of $X_1^{d-1}X_j$, $j\in\{0,2,\ldots,k\}$.
For $d\geq3$, these two collections of monomials are disjoint.
They all vanish at $p$ and $q$, so their coefficients may be
varied freely without changing $f(p)=f(q)=y$. Consequently, the
two matrices $df_p$ and $df_q$ may be varied independently and
arbitrarily. The two determinantal conditions
$\det(df_p)=\det(df_q)=0$ therefore have codimension two.

It remains to consider $d=2$. For $1\leq i\leq k$, write the
part of $F_i$ which contributes to the first derivatives at $p$
or $q$ as
\[
 c_iX_0X_1
 +\sum_{j=2}^k a_{ij}X_0X_j
 +\sum_{j=2}^k b_{ij}X_1X_j.
\]
 Put $c=(c_1,\ldots,c_k)^{\mathsf t}$, $A=(a_{ij})$, and $B=(b_{ij})$,
where $A,B\in\operatorname{Mat}_{k\times(k-1)}(\Cbb)$. Since
$F_i(p)=F_i(q)=0$ for $i\geq1$, the quotient rule gives
\[
 df_p=F_0(p)^{-1}[\,c\mid A\,],
 \qquad
 df_q=F_0(q)^{-1}[\,c\mid B\,].
\]
On the locus $c\neq0$, the polynomial $\det[\,c\mid A\,]$ is
a nonzero equation in the variables $A$, while
$\det[\,c\mid B\,]$ is a nonzero equation in the disjoint set of
variables $B$. Their simultaneous vanishing therefore has
codimension two. On the locus $c=0$, both determinants vanish
automatically, but $c=0$ itself has codimension $k\geq2$.
Thus, also when $d=2$, the simultaneous conditions
$p,q\in R_f$ have codimension at least two after
$f(p)=f(q)=y$ has been imposed.

It follows that every fiber of the displayed projection has
codimension at least $2k+2$ in $\End_d^k$. Since the parameter space of
$(p,q,y)$ has dimension $3k$, we obtain
\[
 \dim\widetilde I_{\mathrm{bir}}
 \leq \dim\End_d^k+3k-(2k+2)
 =\dim\End_d^k+k-2.
\]
Forgetting $y$ identifies $\widetilde I_{\mathrm{bir}}$ with
\[
 I_{\mathrm{bir}}
 :=
 \left\{
 (f,p,q)\in
 \End_d^k\times\bigl(\Pbb^k\times\Pbb^k\setminus\Delta\bigr)
 \ \middle|\
 p,q\in R_f,\quad f(p)=f(q)
 \right\},
\]
since $y$ is uniquely determined by $f,p,q$. Hence
$\dim I_{\mathrm{bir}}\leq\dim\End_d^k+k-2$.

Let
\[
 E:=
 \left\{
 f\in\End_d^k
 \ \middle|\
 \dim(I_{\mathrm{bir}})_f=k-1
 \right\}.
\]
By the constructibility of fiber dimension for morphisms of finite
type, $E$ is constructible. Since the projection
$I_{\mathrm{bir}}\to\End_d^k$ is defined over $\Qbb$, so is $E$. Moreover,
\[
 \dim E+(k-1)
 \leq\dim I_{\mathrm{bir}}
 \leq\dim\End_d^k+k-2,
\]
so $\dim E\leq\dim\End_d^k-1$. Thus
$\overline E\subsetneq\End_d^k$ is a
proper closed subset defined over $\Qbb$. Then
$\Ucal_2:=\Ucal_1\cap
\bigl(\End_d^k\setminus\overline E\bigr)$ has the required properties.
\end{proof}

\begin{rem}
\label{rem:birationality-irs}
For $d\geq3$, the generic birationality of $f|_{R_f}$ also follows
from~\cite[Propositions~21 and~23(a)]{pcfsilverman}, taking $\ell=1$.
\end{rem}

\begin{lem}
\label{lem:generic-branch-support}
There exists a dense Zariski open subset
$\Ucal_3\subset \Ucal_2$, defined over $\Qbb$, such
that every $f\in \Ucal_3(\Cbb)$ satisfies \textup{(S3)}.
\end{lem}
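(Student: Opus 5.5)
The plan is to show that the failure locus of \textup{(S3)} is Zariski closed in $\End_d^k$ and defined over $\Qbb$, and then to exhibit one endomorphism outside it. Since $B_f\subset f^{-1}(f(B_f))$ always holds, \textup{(S3)} fails exactly when $f^{-1}(f(B_f))=B_f$ set-theoretically.

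\emph{Step 1: rewrite the failure of \textup{(S3)} as an identity of divisors.} Pass from sets to divisors via the branch cycle $\Bcal_f=f_*\Rcal_f$. The claim to prove is that, for every $f\in\End_d^k$, the divisor $f^*f_*\Bcal_f$ has support exactly $f^{-1}(f(B_f))$. This holds because $f_*$ and $f^*$ of an effective divisor have supports $f(\cdot)$ and $f^{-1}(\cdot)$. Consequently \textup{(S3)} fails if and only if $\Supp(f^*f_*\Bcal_f)\subset\Supp(\Bcal_f)$.

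To obtain a closed condition, use an equivalent formulation in terms of divisors of fixed degree. Both $f^*f_*\Bcal_f$ and $\Bcal_f$ are effective divisors; by the projection formula, $f^*f_*\Bcal_f$ has degree $d\cdot d^{k-1}M=d^kM$. I would consider the incidence
\[
 \{(f,D)\ :\ D=f^*f_*\Bcal_f,\ \Supp D\subset\Supp\Bcal_f\}.
\]
Alternatively, and more simply, consider the condition that $f^*f_*\Bcal_f$ divides some fixed power of $\Bcal_f$, namely
\[
 (f^*f_*\Bcal_f)\le N\,\Bcal_f \qquad\text{with } N=d^kM .
\]
This is equivalent to the support inclusion, because multiplicities of $f^*f_*\Bcal_f$ are bounded by its degree $d^kM$.

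\emph{Step 2: algebraicity.} The maps $f\mapsto\Rcal_f$ (the Jacobian determinant), $f\mapsto f_*D$ (norm of the defining form), and $f\mapsto f^*D$ (composition) are morphisms over $\Qbb$ from $\End_d^k$ to projective spaces of hypersurfaces of the appropriate degrees. The relation ``$D_1\le ND_2$'', i.e.\ ``the form $F_1$ divides $F_2^N$'', is a closed condition on pairs of forms of fixed degrees. Hence the failure locus $Z\subset\End_d^k$ is Zariski closed and defined over $\Qbb$. Then $\Ucal_3:=\Ucal_2\setminus Z$ is open, and it is dense provided $Z\ne\End_d^k$, since $\End_d^k$ is irreducible.

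\emph{Step 3: an explicit example.} Take $f=A\circ p_d$ with $A\in\PGL_{k+1}$ general. Then $R_f=\bigcup_i H_i$ with $H_i=\{X_i=0\}$, so $B_f=\bigcup_i A(H_i)$. Moreover $f^{-1}(f(B_f))=p_d^{-1}\bigl(p_d(B_f)\bigr)$ contains the hyperplanes $\zeta\cdot A(H_i)$ for every diagonal matrix $\zeta$ of $d$-th roots of unity. For general $A$, the hyperplane $A(H_0)$ is not invariant under a nontrivial $\zeta$, and its translates are not among the $A(H_j)$. So \textup{(S3)} holds for $f$, and $f\notin Z$.

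The main obstacle is Step 1/2: the subtle point is making the support inclusion a genuinely closed condition uniformly over all of $\End_d^k$, including degenerate $f$ such as $A\circ p_d$, where $\Rcal_f$ is nonreduced. The bounded-power divisibility formulation handles this, and I would check it carefully.
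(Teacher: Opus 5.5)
Your proposal is correct, but it takes a different route from the paper. The paper works only on $\Ucal_2$, where \textup{(S1)} and \textup{(S2)} make $\Bcal_f=B_f$ reduced and irreducible. If \textup{(S3)} fails, then $f^*B_1=eB_f$, and $e\geq2$ is excluded by comparing $\deg R_f$ with $\deg B_f$. So a failure of \textup{(S3)} means that the equation $\Phi_f$ of the branch cycle is a pullback $C(F_0,\ldots,F_k)$ of a form $C$ of degree $q=M/d$. This pullback locus $Z_{\mathrm{pb}}$ is closed, being the image of a closed incidence in $\End_d^k\times\Pbb(W_q)$. It is proper because every form in the image of $(Lp_d)^*$ is $\Delta_d$-invariant, whereas $\Phi_{Lp_d}$ is not for general $L$.

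You instead show that the exact failure locus of \textup{(S3)} is closed in all of $\End_d^k$, without using \textup{(S1)} or \textup{(S2)}. You do this by rewriting the support inclusion $f^{-1}(f(B_f))\subset B_f$ as the bounded divisibility $f^*f_*\Bcal_f\leq d^kM\,\Bcal_f$. Your test map $Ap_d$ and the count of $\Delta_d$-translates of $A(H_0)$ are essentially the paper's invariance argument, applied directly to \textup{(S3)}.

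What your version buys is a stronger statement: the \textup{(S3)} locus itself is open in the whole parameter space, and properness is checked by verifying \textup{(S3)} at one map. By contrast, $Z_{\mathrm{pb}}$ only contains the failure locus inside $\Ucal_2$. The price is that you need a second pushforward $f\mapsto f_*\Bcal_f$ to be a morphism over $\Qbb$, and a divisibility incidence in very large degree. The pushforward is a norm in the finite flat family $(f,x)\mapsto(f,f(x))$; it is straightforward here since $d\mid M$ gives $\Ocal(M)=f^*\Ocal(q)$. The paper needs only the branch-cycle map $\beta$ and the substitution map $\gamma$, but it relies on the structure already secured in $\Ucal_2$.
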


\begin{proof}
Put $W_N:=H^0(\Pbb^k,\Ocal(N))$, and choose a nonzero equation
$\Phi_f\in W_M$ of the branch cycle $\Bcal_f$. This choice is unique
up to a nonzero scalar, and the associated projective class defines the
$\Qbb$-morphism
\[
 \beta:\End_d^k\longrightarrow\Pbb(W_M),
 \qquad f\longmapsto[\Phi_f].
\]
If
$f=[F_0:\cdots:F_k]$, substitution defines a morphism over $\Qbb$
\[
 \begin{aligned}
 \gamma:\End_d^k\times\Pbb(W_q)&\longrightarrow\Pbb(W_M),\\
 (f,[C])&\longmapsto[C(F_0,\ldots,F_k)].
 \end{aligned}
\]
The
incidence subset
\[
 \Ical:=\left\{(f,[C])\mid
 [\Phi_f]=[C(F_0,\ldots,F_k)]\right\}
 \subset\End_d^k\times\Pbb(W_q)
\]
is closed and defined over $\Qbb$. Its image $Z_{\mathrm{pb}}$ in $\End_d^k$ is closed and defined over
$\Qbb$.

We show that $Z_{\mathrm{pb}}$ is proper. For
$L\in\PGL_{k+1}$, let
\[
 p_d=[X_0^d:\cdots:X_k^d],\qquad
 f_L:=L p_d,\qquad
 \Delta_d:=\mu_d^{k+1}/\mu_d\subset\PGL_{k+1}.
\]
Every form in the image of $f_L^*$ is invariant under
$\Delta_d$. If $H_i:=\{X_i=0\}$, then
\[
 \Bcal_{f_L}
 =(d-1)d^{k-1}\sum_{i=0}^k[L(H_i)].
\]
For a general $L\in\PGL_{k+1}$, the hyperplane arrangement $\mathscr H_L:=\bigcup_{i=0}^k L(H_i)$
is invariant under no nontrivial element of $\Delta_d$. To see this,
put
\[
 \mathcal N:=
 \left\{
 \left[\operatorname{diag}(\lambda_0,\ldots,\lambda_k)P_\sigma\right]
 \in\PGL_{k+1}(\Cbb)
 \ \middle|\
 \lambda_i\in\Cbb^\times,\ \sigma\in S_{k+1}
 \right\},
\]
where $P_\sigma$ is the permutation matrix associated with $\sigma$.
This is precisely the stabilizer of the coordinate arrangement
$\bigcup_iH_i$. Equivalently, $\mathcal N$ is the stabilizer of
\[
 [X_0\cdots X_k]\in
 \Pbb\bigl(H^0(\Pbb^k,\Ocal(k+1))\bigr)
\]
under the natural action of $\PGL_{k+1}$. Hence $\mathcal N$ is closed.
For
$\delta\in\Delta_d$, one has
\[
   \delta(\mathscr H_L)=\mathscr H_L
   \quad\Longleftrightarrow\quad
   L^{-1}\delta L\in\mathcal N.
\]

Fix $1\neq\delta\in\Delta_d$ and consider the Zariski closed set
\[
   Z_\delta:=
   \left\{
      L\in\PGL_{k+1}\mid L^{-1}\delta L\in\mathcal N
   \right\}.
\]
We claim that it is proper.
Choose a diagonal
representative $D=\operatorname{diag}(\lambda_0,\ldots,\lambda_k)$
of $\delta$. Since $\delta\neq1$ in $\PGL_{k+1}$, there exist $a\neq b$ with
$\lambda_a\neq\lambda_b$. For $t\in\Cbb$, put
$L_t=I+tE_{ab}$. Since $E_{ab}^2=0$, we have
$L_t^{-1}=I-tE_{ab}$ and therefore
\[
\begin{aligned}
   L_t^{-1}DL_t
   =(I-tE_{ab})D(I+tE_{ab}) 
   =D+t(\lambda_a-\lambda_b)E_{ab}.
\end{aligned}
\]
For $t\neq0$, this matrix has all its diagonal entries nonzero and also
has a nonzero off-diagonal $(a,b)$-entry. It is therefore not
monomial, and hence $L_t^{-1}\delta L_t\notin\mathcal N$. Thus
$L_t\notin Z_\delta$, proving that $Z_\delta$ is proper.

The finite union $\bigcup_{1\neq\delta\in\Delta_d} Z_\delta$
is a proper closed subset of $\PGL_{k+1}$. Every $L$ outside this
union has the required property: no nontrivial element of $\Delta_d$
preserves $\mathscr H_L$.

Consequently, $[\Phi_{f_L}]$ is not invariant under $\Delta_d$ for
general $L$, whereas every point of
$\Pbb(\operatorname{im}(f_L^*))$ is invariant under $\Delta_d$.
Thus $Z_{\mathrm{pb}}$ is proper. Set
$\Ucal_3:=\Ucal_2\setminus Z_{\mathrm{pb}}$; this is a dense Zariski open
subset defined over $\Qbb$.

Let $f\in \Ucal_3(\Cbb)$.
Suppose that \textup{(S3)} fails, and put $B_1:=f(B_f)$. Then
$f^{-1}(B_1)=B_f$. Since $B_f$ and $B_1$ are integral, $f^*B_1=eB_f$
for some integer $e\geq1$. If $e\geq2$, then $f$ is ramified at the
general point of $B_f$, so $B_f=R_f$, contradicting
\[
 \deg R_f=(k+1)(d-1)
 \quad\text{and}\quad
 \deg B_f=(k+1)(d-1)d^{k-1}.
\]
Thus $e=1$. Taking degrees gives $\deg B_1=q$. If $C\in W_q$ is an
equation of $B_1$, then
\[
 \operatorname{div}(f^*C)=f^*B_1=B_f=\Bcal_f
 =\operatorname{div}(\Phi_f),
\]
and hence $[\Phi_f]=[f^*C]$, contrary to
$f\notin Z_{\mathrm{pb}}$.
\end{proof}

\begin{rem}
\label{rem:branch-support-irs}
For $k\geq3$ and $d\geq3$, the generic validity of \textup{(S3)} can also be deduced
from the argument of~\cite[Lemma~29 and Proposition~23(a)]{pcfsilverman},
with $\ell=2$, together with \textup{(S2)}.
\end{rem}

\begin{thm}[Gauthier--Taflin--Vigny~{\cite[Theorem~B]{GauthierTaflinVignyPCF}}]
\label{thm:pcf-sparsity}
There exists a dense Zariski open subset
$\Ucal_4\subset\End_d^k$, defined over $\Qbb$, such that
no $f\in \Ucal_4(\Cbb)$ is PCF.
\end{thm}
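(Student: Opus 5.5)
The plan is to deduce the statement directly from the non-density theorem of Gauthier--Taflin--Vigny, and to check rationality by the same Galois argument as in Lemma~\ref{lem:ramification-integral}. Let $\mathrm{PCF}_d^k\subset\End_d^k(\Cbb)$ be the set of postcritically finite endomorphisms. Let $Z\subset\End_d^k$ be its Zariski closure.

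The first step invokes \cite[Theorem~B]{GauthierTaflinVignyPCF}, which says that $\mathrm{PCF}_d^k$ is not Zariski dense in $\End_d^k$ when $k\geq2$. Hence $Z$ is a proper Zariski closed subset. Since $\End_d^k$ is irreducible, its complement $\Ucal_4:=\End_d^k\setminus Z$ is dense Zariski open. By construction, no complex point of $\Ucal_4$ is PCF.

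The second step shows that $Z$ is defined over $\Qbb$. For $\sigma\in\Aut(\Cbb/\Qbb)$ and $f\in\End_d^k(\Cbb)$, applying $\sigma$ to coefficients commutes with composition, so $(f^m)^\sigma=(f^\sigma)^m$. As in Lemma~\ref{lem:ramification-integral}, one also has $R_{f^\sigma}=(R_f)^\sigma$. It follows that
\[
\operatorname{PC}(f^\sigma)=\bigcup_{m\geq1}(f^\sigma)^m(R_{f^\sigma})=\operatorname{PC}(f)^\sigma .
\]
The field automorphism $\sigma$ acts on $\Pbb^k(\Cbb)$ as a homeomorphism for the Zariski topology. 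It maps Zariski closed sets to Zariski closed sets, and hence commutes with taking Zariski closure. Therefore $\operatorname{PC}(f)$ is Zariski closed if and only if $\operatorname{PC}(f^\sigma)$ is, so $\mathrm{PCF}_d^k$ is $\Aut(\Cbb/\Qbb)$-stable. Taking Zariski closures, which again commute with $\sigma$, shows that $Z$ is $\Aut(\Cbb/\Qbb)$-stable.

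A Zariski closed subset of a variety over $\Qbb$ that is stable under $\Aut(\Cbb/\Qbb)$ is defined over $\Qbb$. This is the standard descent fact in characteristic zero: $\Cbb^{\Aut(\Cbb/\Qbb)}=\Qbb$, so the defining ideal descends. Consequently $\Ucal_4$ is defined over $\Qbb$.

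The only substantive input is the non-density of the PCF locus, which is exactly the cited theorem. The rest is formal, so I do not expect any real obstacle. The one point to handle carefully is the Galois-compatibility of the Zariski closure defining PCF. I would settle it using the observation that $\sigma$ is a Zariski homeomorphism of $\Pbb^k(\Cbb)$.
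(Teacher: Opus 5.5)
Your proposal is correct and follows essentially the same route as the paper: you take the non-density of the PCF locus directly from Gauthier--Taflin--Vigny, and you obtain rationality of $\Ucal_4$ from the $\Aut(\Cbb/\Qbb)$-invariance of the PCF locus, hence of its Zariski closure. The only difference is that you spell out the Galois-compatibility and descent details, which the paper compresses into a single sentence.
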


The PCF locus is invariant under $\Aut(\Cbb/\Qbb)$; hence its Zariski
closure, and therefore its open complement, is defined over $\Qbb$.

Combining the preceding results, set
\[
U_{\mathrm{simp}}:=\bigcap_{i=1}^4\Ucal_i.
\]

\begin{prop}\label{prop:simple-open}
The subset $U_{\mathrm{simp}}\subset\End_d^k$ is dense Zariski open
and defined over $\Qbb$. Every
$f\in U_{\mathrm{simp}}(\Cbb)$ is simple.
\end{prop}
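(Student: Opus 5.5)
The plan is to assemble Proposition~\ref{prop:simple-open} from the four lemmas already proved. We will check three things in turn: openness, density and field of definition, and then the simplicity conditions.

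\emph{Openness and density.} By Lemma~\ref{lem:ramification-integral}, Lemma~\ref{lem:ramification-birational}, Lemma~\ref{lem:generic-branch-support} and Theorem~\ref{thm:pcf-sparsity}, each $\Ucal_i$ is a dense Zariski open subset of $\End_d^k$. In fact $\Ucal_3\subset\Ucal_2\subset\Ucal_1$, so
\[
 U_{\mathrm{simp}}=\Ucal_3\cap\Ucal_4 .
\]
The space $\End_d^k$ is a Zariski open subset of a projective space, hence irreducible. In an irreducible space any two nonempty open subsets meet, and their intersection is again dense and open. So $U_{\mathrm{simp}}$ is dense Zariski open.

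\emph{Field of definition.} Each $\Ucal_i$ is defined over $\Qbb$: for $\Ucal_1,\Ucal_2,\Ucal_3$ this is stated in the lemmas, and for $\Ucal_4$ it is the remark after Theorem~\ref{thm:pcf-sparsity}. Equivalently, each complement is a closed subset defined over $\Qbb$. A finite union of such closed sets is again defined over $\Qbb$, and hence so is $U_{\mathrm{simp}}$.

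\emph{Simplicity.} Let $f\in U_{\mathrm{simp}}(\Cbb)$. Condition (S1) holds because $f\in\Ucal_1$. Condition (S2) holds because $f\in\Ucal_2$. Condition (S3) holds because $f\in\Ucal_3$; this is exactly the conclusion of Lemma~\ref{lem:generic-branch-support}. Condition (S4) holds because $f\in\Ucal_4$, so $f$ is not PCF.

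There is no real obstacle here, since the substantive work is done in the cited lemmas and in \cite{GauthierTaflinVignyPCF}. The only point worth noting is that density of the intersection uses the irreducibility of $\End_d^k$.
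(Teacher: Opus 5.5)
Your argument is correct and matches the paper: the paper gives no separate proof, since the proposition follows at once from defining $U_{\mathrm{simp}}:=\bigcap_{i=1}^4\Ucal_i$, where each $\Ucal_i$ is a dense Zariski open subset defined over $\Qbb$ that guarantees condition \textup{(S$i$)}. One small correction: in any topological space the intersection of two dense open subsets is dense and open, so you do not need the irreducibility of $\End_d^k$ for that step.
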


The following consequence of simplicity will be used to exclude
contracted hypersurfaces in rational semiconjugacies.

\begin{cor}
\label{cor:no-totally-invariant-hypersurface}
Let $f\in\End_d^k$ be simple. For every $m\geq1$, there is no
nonempty hypersurface $D\subset\Pbb^k$ satisfying
\[
 (f^m)^{-1}(D)=D.
\]
\end{cor}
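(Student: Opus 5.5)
The plan is to reduce to a single totally invariant irreducible component and then compare ramification indices. Because the ramification divisor of a simple map is integral, every hypersurface has ramification index at most $2$ under $f$, whereas total invariance forces ramification index $d^n$.

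\emph{Step 1: reduce to one irreducible component.} Suppose $D\neq\emptyset$ is a hypersurface with $(f^m)^{-1}(D)=D$. Since $f^m$ is finite and surjective, we get $f^m(D)=D$, so $f^m$ permutes the finitely many irreducible components of $D$. Choose $N\geq1$ such that $F:=f^{n}$, with $n:=mN$, fixes every component. Let $E$ be a component. Then $F^{-1}(E)$ is a union of components of $D$, each of which $F$ maps onto $E$. Since $F$ fixes every component, the only such component is $E$, so $F^{-1}(E)=E$.

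\emph{Step 2: compute the multiplicity of $F^*E$.} Write $F^*E=eE$ as divisors. Since $F^*$ multiplies degrees by $d^{n}$ (it pulls back $\Ocal(1)$ to $\Ocal(d^n)$), comparing degrees gives $e=d^{n}\geq2$. On the other hand, $e$ is the ramification index of $F$ along $E$. By the chain rule this index is the product
\[
 e_f(E)\,e_f(f(E))\cdots e_f(f^{n-1}(E))
\]
of the ramification indices of $f$ along the successive images. Here $f^i(E)=E$ for $i$ a multiple of... in fact all $f^i(E)$ are hypersurfaces, since $f$ is finite.

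\emph{Step 3: bound each factor using (S1).} The coefficient of a prime divisor $P$ in $\Rcal_f$ equals $e_f(P)-1$, because $f$ is a finite map in characteristic zero. By (S1), $\Rcal_f$ is integral, so $e_f(P)=2$ if $P=R_f$ and $e_f(P)=1$ otherwise. Hence the product in Step 2 is at most $2^{n}$, with equality only if $d=2$ and $f^i(E)=R_f$ for all $0\leq i<n$. In particular $E=R_f$.

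\emph{Step 4: derive a contradiction.} Since $E=R_f$ and $F(E)=E$ with $F=f^n$, we need $f(R_f)=B_f$ to satisfy $f^{n-1}(B_f)=R_f$ when $n\geq2$, or $B_f=R_f$ when $n=1$. When $n\geq2$, Step 3 gives $f(E)=R_f$ directly, i.e. $B_f=R_f$ again. But $\deg B_f=(k+1)(d-1)d^{k-1}>(k+1)(d-1)=\deg R_f$, using $k\geq2$ and, by (S2), that $f|_{R_f}$ is birational onto $B_f$. This contradiction proves the corollary.

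The main point requiring care is Step 3, specifically the identification of the multiplicity of $F^*E$ with the product of ramification indices along the orbit of $E$, together with the fact that integrality of $\Rcal_f$ caps each index at $2$. Both are standard for finite morphisms of smooth varieties in characteristic zero, computed at a general point of each hypersurface.
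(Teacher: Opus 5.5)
Your argument is correct, but it takes a different route from the paper's.

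The paper never computes ramification indices. After reducing to an irreducible $D$, it observes that $(f^m)^*D=d^mD$ with $d^m>1$ puts $D$ inside $R_{f^m}=\bigcup_{j<m}f^{-j}(R_f)$. Irreducibility of $R_f$ then gives $f^j(D)=R_f$ for some $j<m$, hence $f^m(R_f)=f^j(f^m(D))=R_f$. This makes $\operatorname{PC}(f)$ a finite union of hypersurfaces, contradicting \textup{(S4)}. That proof uses only the irreducibility part of \textup{(S1)} together with the non-PCF condition.

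You instead use the reducedness part of \textup{(S1)} quantitatively. Multiplicativity of ramification indices along the orbit gives $d^n=2^a$, where $a=\#\{0\leq i<n\mid f^i(E)=R_f\}\leq n$. For $d\geq3$ this is already impossible, from reducedness alone, since $2^a\leq 2^n<d^n$. For $d=2$ it forces every $f^i(E)$ to equal $R_f$, hence $f(R_f)=R_f$. You rule this out by the degree comparison $\deg B_f=(k+1)(d-1)d^{k-1}>\deg R_f$ coming from \textup{(S2)}; you could equally invoke \textup{(S4)} here, since $f(R_f)=R_f$ makes $f$ PCF.

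What each route buys: your version shows the corollary already follows from \textup{(S1)} and \textup{(S2)}, without the deep Gauthier--Taflin--Vigny input behind \textup{(S4)}, at the cost of a slightly longer case analysis. The paper's version is shorter and would still work if $\Rcal_f$ were irreducible but nonreduced.

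Two expository fixes. The dangling sentence at the end of Step~2 (``Here $f^i(E)=E$ for $i$ a multiple of\ldots'') should be deleted. Step~4 can simply say that in both cases, $n=1$ and $n\geq2$, one gets $f(R_f)=R_f$, i.e.\ $B_f=R_f$, before the degree comparison.
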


\begin{proof}
Suppose that such a reduced hypersurface $D$ exists. After replacing
$m$ by a positive multiple, we may assume $D$ is irreducible.
Since $(f^m)^*D=d^mD$ and $d^m>1$, we have
$D\subset R_{f^m}=\bigcup_{j=0}^{m-1}f^{-j}(R_f)$.
So $f^j(D)=R_f$ for some
$0\leq j<m$. Using $f^m(D)=D$, we obtain
$f^m(R_f)=f^{m+j}(D)=f^j(f^m(D))=f^j(D)=R_f$.
Therefore $\operatorname{PC}(f)=\bigcup_{r\geq1}f^r(R_f)
=\bigcup_{r=1}^{m}f^r(R_f)$ is Zariski closed. This says that $f$
is PCF, contradicting \textup{(S4)}.
\end{proof}

\begin{rem}
For a general endomorphism, the conclusion of
Corollary~\ref{cor:no-totally-invariant-hypersurface} follows from
Dinh--Sibony~\cite[Lemma~1.69]{DSbook}.
Here we need this conclusion for every simple endomorphism.
\end{rem}

\section{Finite-level monodromy and tree actions}
\label{sec:finite-level-monodromy}
We study the finite-level monodromy action on the rooted preimage
tree.
Throughout this section and Section~\ref{sec:semiconjugacy-rigidity},
let $f\in\End_d^k$ be simple.

For $j\geq0$, put
\[
        \Rcal_j:=(f^j)^*\Rcal_f,
        \qquad
        R_j:=\Supp(\Rcal_j)=f^{-j}(R_f),
        \qquad
        B_j:=f^j(B_f).
\]
If $g$ is another surjective endomorphism of $\Pbb^k$, then the chain
rule gives $\Rcal_{fg}=\Rcal_g+g^*\Rcal_f$, and hence
$B_{fg}=B_f\cup f(B_g)$. In particular, by induction,
$\Rcal_{f^m}=\sum_{j=0}^{m-1}\Rcal_j$ and
$B_{f^m}=\bigcup_{j=0}^{m-1}B_j$. 

With this notation, for $s\geq0$ put
\[
 D_0:=\varnothing,
 \qquad
 D_s:=B_{f^s}=B_0\cup\cdots\cup B_{s-1}\quad(s\geq1),
 \qquad
 U_s:=\Pbb^k\setminus D_s.
\]
Fix an integer $m\geq1$ and a base point $y\in U_m$.

\begin{defi}[\emph{Rooted preimage tree}]\label{def:tree}
The \emph{root} is $y$. For $0\leq s\leq m$, the
\emph{vertices of depth $s$} are
$T_s(y):=f^{-s}(y)$.
Vertices at different depths are regarded as distinct, even when
they represent the same point of $\Pbb^k$.
The \emph{parent} of $x\in T_s(y)$, $s\geq1$, is
$f(x)\in T_{s-1}(y)$.
If $v\in T_s(y)$, its \emph{descendants at depth $m$} form the set
$\Desc_m(v):=(f^{m-s})^{-1}(v)=T_{m-s}(v)\subset T_m(y)$.
\end{defi}

For $z\in U_s$, let
$\rho_{s,z}:\pi_1(U_s,z)\to\Sym(T_s(z))$
be the \emph{monodromy representation} of the \'etale cover
$f^s:f^{-s}(U_s)\to U_s$, and let
$G_s(z):=\im(\rho_{s,z})$ be the corresponding \emph{monodromy group}.
For the fixed base point $y$, we abbreviate $\rho_s:=\rho_{s,y}$.
Note that $T_0(z)=\{z\}$ and $G_0(z)=\{1\}$.

\subsection{Meridians}
Let $M$ be a connected complex manifold, let $H\subset M$ be a
hypersurface, and let $D$ be an irreducible component of $H$. Put
$M^\circ:=M\setminus H$. Let $D^{\mathrm{sm}}$ be the smooth locus of $D$. At a point
$p\in D^{\mathrm{sm}}\setminus\bigcup_{D'\neq D}D'$, a
\emph{transverse disk to $D$ at $p$} is a holomorphic map
$\iota:\Delta_2:=\{z\in\Cbb\mid |z|<2\}\to M$ such that
\[
        \iota(0)=p,
        \qquad
        \iota^{-1}(H)=\{0\},
        \qquad
        d\iota_0(T_0\Delta_2)\not\subset T_pD.
\]
The counterclockwise circle $\iota|_{S^1}$ is the
\emph{positive transverse circle at $p$}.

\begin{defi}[\emph{Meridian}]\label{def:meridian}
Fix $q\in M^\circ$, choose a path $c$ in $M^\circ$ from $q$ to
$\iota(1)$, and let $\delta:=\iota|_{S^1}$. The based loop
$c\delta c^{-1}$ is called a \emph{meridian} of $D$ at $p$.
Changing $c$ conjugates the corresponding element of
$\pi_1(M^\circ,q)$.
\end{defi}

For any covering under consideration, the connecting path $c$
identifies its fiber over $q$ with the local sheets over $\iota(1)$ by
analytic continuation; we suppress this identification from the
notation. Replacing $c$ conjugates the meridian and changes this
identification by monodromy.

The smooth locus of an irreducible complex hypersurface, after removing
its intersections with the other components, is path connected. Hence
the meridians of one irreducible component form a single conjugacy class.
If $H\subset H'$ are hypersurfaces in $M$, then the inclusion
$M\setminus H'\hookrightarrow M\setminus H$ induces a surjection on
fundamental groups. Moreover, if
$H=D_1\cup\cdots\cup D_r$, then the kernel of $\pi_1(M^\circ,q)\to \pi_1(M,q)$
is the normal subgroup generated by meridians around the components $D_i$. In
particular, since $\Pbb^k$ is simply connected, $\pi_1(U_m,y)$ is normally generated
by meridians around $B_0,\ldots,B_{m-1}$. See
\cite[Definition~1.33 and Propositions~1.34 and~2.2]{Cogolludo2011}
or \cite[Section 2.1]{AguilarAguilar} for these standard facts.

\begin{lem}\label{lem:generic-fold}
At a general point $p\in R_f$, there are analytic coordinates centered
at $p$ and $f(p)$ in which
\[
        f(x_1,\ldots,x_k)=(x_1,\ldots,x_{k-1},x_k^2).
\]
In particular, the local monodromy around $B_f$ is a transposition.
\end{lem}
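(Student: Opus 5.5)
We prove the lemma by combining (S1) and (S2) with a local computation at a general point of $R_f$. The steps are: find a good open set of points, read off the multiplicity $2$ and the rank drop of $df$ there, normalize to the fold, and then compute the loop around $B_f$.

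Let $R_f^\circ\subset R_f$ be the dense Zariski open subset of points $p$ satisfying four conditions:
\begin{enumerate}
\item $p$ is a smooth point of $R_f$;
\item $f|_{R_f}$ is an isomorphism near $p$ onto a neighborhood of $f(p)$ in $B_f$, which is possible by (S2);
\item $f(p)$ is a smooth point of $B_f$;
\item $f^{-1}(f(p))\cap R_f=\{p\}$.
\end{enumerate}
Condition (iv) is also available from (S2): birationality of $f|_{R_f}$ means the general fiber of $R_f\to B_f$ is a single point.

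By (S1), $\Rcal_f$ is reduced. Near $p$, the Jacobian determinant $J=\det df$ is therefore a local defining function of the smooth hypersurface $R_f$ vanishing to order exactly one. It follows that $f$ has local topological degree $2$ at $p$: the local degree is $1+\operatorname{ord}$ for a map that is generically a fold along a smooth reduced ramification divisor.

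To justify this cleanly, I would first argue that $df_p$ has rank exactly $k-1$. If the rank dropped by two or more, then $J$ would vanish to order at least $2$ at $p$, which happens only on a proper closed subset of $R_f$. So after shrinking $R_f^\circ$, the rank is $k-1$ everywhere on it.

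Next, since $f|_{R_f}$ is a local immersion at $p$ by (ii), the restriction $df_p|_{T_pR_f}$ is injective. Hence $\ker df_p$ is a line transverse to $T_pR_f$.

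Now choose coordinates $(x_1,\ldots,x_k)$ at $p$ with $R_f=\{x_k=0\}$, and coordinates at $f(p)$ with $f(x',0)=(x',0)$ along $R_f$, where $x'=(x_1,\ldots,x_{k-1})$. This is possible by (ii). Write
\[
 f=(x'+x_k a(x),\ g(x)).
\]
After the change of variables $x'\mapsto x'+x_k a(x)$, which is a local biholomorphism because the kernel is transverse, we may take $f=(x',g(x',x_k))$ with $g(x',0)=0$. Then $J=\partial g/\partial x_k$ vanishes exactly to first order along $x_k=0$. This gives
\[
 g=x_k^2\,u(x',x_k),\qquad u(p)\neq0,
\]
and replacing $x_k$ by $x_k\sqrt{u}$ yields the normal form $f(x_1,\ldots,x_k)=(x_1,\ldots,x_{k-1},x_k^2)$.

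For the monodromy, take a transverse circle around $B_f$ at $f(p)$. By (iv), the other $d^k-2$ preimages of $f(p)$ lie outside $R_f$, so $f$ is étale there and those sheets are fixed by the loop. The two sheets near $p$ are $x_k=\pm\sqrt{w}$, which are exchanged. The local monodromy is therefore a transposition.

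The main obstacle is the step establishing that $df_p$ has rank $k-1$ with kernel transverse to $R_f$, that is, excluding a cusp-like behavior where the kernel is tangent to $R_f$. This is handled by the combination of reducedness (S1) with the local immersion property from (S2), as described above.
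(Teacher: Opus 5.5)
Your proposal is correct and follows essentially the paper's argument. Both use (S2) to take coordinates $x_i=f^*y_i$ ($i<k$) with $x_k$ a local equation of $R_f$, use reducedness from (S1) to force $f^*y_k=u\,x_k^2$ (your Jacobian computation $\partial g/\partial x_k=\text{unit}\cdot x_k$ is the paper's ``ramification multiplicity $e-1=1$''), and use the single ramified preimage from (S2) to get one transposition.

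The rank/transversality step you flag as the main obstacle is not actually needed, and your justification for it is misattributed. The substitution $x'\mapsto x'+x_ka(x)$ has Jacobian $\bigl(\begin{smallmatrix} I & a(p)\\ 0 & 1\end{smallmatrix}\bigr)$ at $p$, so it is a local biholomorphism regardless of where $\ker df_p$ lies. Rank $k-1$ with kernel spanned by $\partial/\partial x_k$ then follows from the normal form itself.
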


We say that $f$ is a \emph{(simple) fold} at $p$.

\begin{proof}
By \textup{(S1)}, $\Rcal_f$ is reduced, and by \textup{(S2)},
$f|_{R_f}:R_f\to B_f$ is birational. Let $p\in R_f$ be a general
point and put $q=f(p)$. We may choose $p$ so that $R_f$ is smooth at
$p$, $B_f$ is smooth at $q$, and $f|_{R_f}:R_f\to B_f$ is an
isomorphism in a neighborhood of $p$ and $q$.
Choose analytic local coordinates $(y_1,\ldots,y_k)$ at $q$ such that
$B_f=\{y_k=0\}$. Since $f|_{R_f}$ is locally an isomorphism at $p$,
the functions $f^*y_1,\ldots,f^*y_{k-1}$ restrict to a system of
local coordinates on $R_f$ near $p$. Let $x_k$ be a local equation of
$R_f$ at $p$, and set $x_i=f^*y_i$, $i=1,\ldots,k-1$. Then
$(x_1,\ldots,x_{k-1},x_k)$ is a system of analytic local coordinates
at $p$.

Since $p$ is general, we may also assume that $R_f$ is the only
irreducible component of $f^{-1}(B_f)$ passing through $p$. Since
$y_k=0$ is a local equation for $B_f$, we have
$f^*y_k=u x_k^e$ for some analytic unit $u$ and some integer $e\geq2$.
The multiplicity of the ramification divisor along $R_f$ is $e-1$.
Since $\Rcal_f$ is reduced, $e=2$. After shrinking the neighborhood,
$u$ admits a square root. Replacing $x_k$ by $\sqrt{u}\,x_k$, we get
$f^*y_k=x_k^2$, which is the desired local normal form.
By \textup{(S2)}, a general point of $B_f$ has exactly one ramified
preimage, so the monodromy of a meridian is a single transposition.
\end{proof}

\begin{lem}\label{lem:G1-full}
The monodromy group $G_1(y)$ is the full symmetric group $S_{d^k}$.
\end{lem}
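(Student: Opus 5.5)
The plan is the classical argument: a transitive permutation group generated by transpositions is the full symmetric group. So it suffices to show that $G_1(y)$ is transitive on $T_1(y)=f^{-1}(y)$, which has $d^k$ points since $y\notin B_f$, and that $G_1(y)$ is generated by transpositions.

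For generation, I use the facts recalled after Definition~\ref{def:meridian}. Since $\Pbb^k$ is simply connected, $\pi_1(U_1,y)$ is normally generated by meridians around the components of $D_1=B_0=B_f$. By \textup{(S1)}, $R_f$ is irreducible, so $B_f=f(R_f)$ is irreducible. Hence all meridians of $B_f$ form a single conjugacy class, and $\pi_1(U_1,y)$ is generated by the conjugates of one meridian $\gamma$. By Lemma~\ref{lem:generic-fold}, $\rho_1(\gamma)$ is a transposition, and conjugates of transpositions are transpositions. Therefore $G_1(y)=\rho_1(\pi_1(U_1,y))$ is generated by transpositions.

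For transitivity, the point is that $f^{-1}(U_1)$ is connected. It is the complement in $\Pbb^k$ of the proper closed subset $f^{-1}(B_f)$, hence a nonempty Zariski open subset of the irreducible variety $\Pbb^k$. Such a set is connected in the analytic topology. Since $f:f^{-1}(U_1)\to U_1$ is a finite étale, hence topological, covering with connected total space, the monodromy action on the fiber $T_1(y)$ is transitive.

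Finally, consider the graph on $T_1(y)$ whose edges are the transpositions in $G_1(y)$. A transitive group generated by transpositions makes this graph connected: every orbit is a union of connected components, and the orbit is everything. Transpositions along a connected graph generate the full symmetric group on its vertex set, so $G_1(y)=S_{d^k}$.

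There is no serious obstacle here. The only point needing care is that a meridian is sent to exactly one transposition and not to a product of several. This uses \textup{(S2)}, through Lemma~\ref{lem:generic-fold}: a general point of $B_f$ has a single ramified preimage, which is a simple fold, and all its other $d^k-2$ preimages are unramified, so they are fixed by the local monodromy.
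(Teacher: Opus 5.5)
Your proof is correct and follows the paper's argument: transitivity from the connectedness of $f^{-1}(U_1)$, and generation of $\pi_1(U_1,y)$ by meridians of the irreducible $B_f$, each acting as a single transposition by Lemma~\ref{lem:generic-fold}. One small fix in the final step: the transposition graph is connected because each orbit is \emph{contained in} a connected component, which holds since $G_1(y)$ is generated by the edge transpositions. Saying that each orbit is a union of components is true but does not give connectedness.
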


\begin{proof}
The covering space $f^{-1}(U_1)$ is a dense
Zariski open subset of the irreducible variety $\Pbb^k$, so it is
connected. Thus $G_1(y)\subset S_{d^k}$ acts transitively.
By Lemma~\ref{lem:generic-fold}, the monodromies of meridians around
$B_f$ are transpositions. Since these meridians generate
$\pi_1(U_1,y)$, the transitive group $G_1(y)$ is generated by
transpositions, hence $G_1(y)=S_{d^k}$.
\end{proof}

For $0\leq r\leq m$, the inclusion $i:U_m\hookrightarrow U_r$ induces
a surjection $i_*:\pi_1(U_m,y)\twoheadrightarrow\pi_1(U_r,y)$ by the
surjectivity recalled above. Path lifting gives, for
every $\gamma\in\pi_1(U_m,y)$ and $x\in T_m(y)$,
\begin{equation}
 f^{m-r}\bigl(\rho_m(\gamma)(x)\bigr)
 =\rho_r(i_*\gamma)\bigl(f^{m-r}(x)\bigr).
 \label{eq:tree-equivariance}
\end{equation}
Because $f^{m-r}:T_m(y)\twoheadrightarrow T_r(y)$ is surjective,
Equation~\eqref{eq:tree-equivariance} shows that
$\ker(\rho_m)\subseteq\ker(\rho_r\circ i_*)$.
Hence the formula
\begin{equation}
 \theta_{m,r}:G_m(y)\longrightarrow G_r(y),
 \qquad
 \theta_{m,r}\bigl(\rho_m(\gamma)\bigr):=\rho_r(i_*\gamma)
 \label{eq:theta-mr}
\end{equation}
is well defined. It is surjective because $i_*$ is surjective. We use
$\theta_{m,r}$ to let $G_m(y)$ act on $T_r(y)$, and write
$(G_m(y))_v:=\{\sigma\in G_m(y)\mid\sigma(v)=v\}$ for the stabilizer of a
vertex $v\in T_r(y)$.

\begin{lem}\label{lem:stabilizer-action}
Let $0\leq r\leq m$ and $v\in T_r(y)$. The group $G_m(y)$ acts
transitively on $T_r(y)$, and the image of $(G_m(y))_v$ on
$\Desc_m(v)=T_{m-r}(v)$ is precisely $G_{m-r}(v)$. 
\end{lem}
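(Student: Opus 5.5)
Transitivity follows from connectedness of the cover. The open set $f^{-r}(U_m)$ is a dense Zariski open subset of the irreducible variety $\Pbb^k$, hence connected. Since $f^r:f^{-r}(U_r)\to U_r$ is an étale cover and $f^{-r}(U_m)\subset f^{-r}(U_r)$, the monodromy of the restricted cover over $U_m$ acts transitively on $T_r(y)$. Its image in $\Sym(T_r(y))$ is $\rho_r(i_*\pi_1(U_m,y))$, which equals $\theta_{m,r}(G_m(y))$ by \eqref{eq:theta-mr}. So $G_m(y)$ acts transitively on $T_r(y)$.

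For the stabilizer, I will use a path-lifting argument. Fix $v\in T_r(y)$. The key observation is that $f^r$ restricts to a finite étale cover $f^{-r}(U_m)\to U_m$. Moreover, since $D_m=D_r\cup f^r(D_{m-r})$ (from $B_{f^m}=B_{f^r}\cup f^r(B_{f^{m-r}})$), the space $f^{-r}(U_m)$ is contained in $V:=U_{m-r}$. Hence it is a Zariski open subset of $U_{m-r}$ containing $v$, with complement a hypersurface.

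By the lifting correspondence, the stabilizer of $v$ in $\pi_1(U_m,y)$ is the image of $\pi_1(f^{-r}(U_m),v)$ under $f^r_*$, after transporting base points appropriately. For a loop $\tilde\gamma$ at $v$ in $f^{-r}(U_m)$, set $\gamma=f^r\circ\tilde\gamma$. The monodromy of $\gamma$ on $T_m(y)$ restricted to $\Desc_m(v)=(f^{m-r})^{-1}(v)$ is computed as follows: lift $\gamma$ through $f^m=f^r\circ f^{m-r}$, which amounts to lifting $\tilde\gamma$ through $f^{m-r}$. The lifts starting in $\Desc_m(v)$ are exactly the $f^{m-r}$-lifts of $\tilde\gamma$, so the permutation of $\Desc_m(v)$ induced by $\rho_m(\gamma)$ is $\rho_{m-r,v}(j_*\tilde\gamma)$, where $j:f^{-r}(U_m)\hookrightarrow U_{m-r}$. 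Therefore the image of $(G_m(y))_v$ on $\Desc_m(v)$ equals $\rho_{m-r,v}(j_*\pi_1(f^{-r}(U_m),v))$.

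Finally, $j_*$ is surjective, because removing a hypersurface from the connected manifold $U_{m-r}$ induces a surjection on $\pi_1$ (as recalled in the Meridians subsection). This gives exactly $G_{m-r}(v)$.

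The main obstacle is the bookkeeping in the lifting correspondence, namely matching elements of $G_m(y)$ fixing $v$ with loops that lift to closed loops at $v$. The key point is that $\sigma=\rho_m(\gamma)$ fixes $v$ under $\theta_{m,r}$ precisely when the $f^r$-lift of $\gamma$ starting at $v$ is closed. One also has to check that $\ker\rho_m$ causes no ambiguity, which holds because we are only computing the image set.
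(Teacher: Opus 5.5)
Your proposal is correct and follows essentially the same route as the paper. Both arguments identify $(G_m(y))_v$ with the images of loops whose $f^r$-lift from $v$ closes up in $W=f^{-r}(U_m)$, compute the action on $\Desc_m(v)$ by lifting $\widetilde\gamma$ through $f^{m-r}$, and conclude from $W\subset U_{m-r}$ (with hypersurface complement) that $\pi_1(W,v)\to\pi_1(U_{m-r},v)$ is surjective. The only cosmetic differences are that you get transitivity directly from connectedness of $f^{-r}(U_m)$ rather than via $f^{-r}(U_r)$ and surjectivity of $\theta_{m,r}$, and that you derive $W\subset U_{m-r}$ from $D_m=D_r\cup f^r(D_{m-r})$ rather than from $f^r(B_s)=B_{s+r}$.
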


\begin{proof}
The covering space $f^{-r}(U_r)$ is a dense Zariski open subset of
$\Pbb^k$, hence is connected. Thus $G_r(y)$ acts transitively on $T_r(y)$,
and the surjectivity of $\theta_{m,r}$ gives the first assertion.

Next, note that $v\in U_{m-r}$. Indeed, if $v\in B_s$ for some
$0\leq s<m-r$, then $y=f^r(v)\in f^r(B_s)=B_{s+r}\subset D_m$,
contrary to $y\in U_m$. Put $W:=f^{-r}(U_m)$. A loop
$\gamma\in\pi_1(U_m,y)$ fixes $v$ if and only if its lift by
$f^r:W\to U_m$ starting at $v$ is a loop
$\widetilde\gamma\in\pi_1(W,v)$. For $x\in\Desc_m(v)$, lifting
$\gamma$ by $f^m$ is the same as first lifting it to
$\widetilde\gamma$ and then lifting $\widetilde\gamma$ by $f^{m-r}$.
It follows that the image of $(G_m(y))_v$ on $\Desc_m(v)$ is the monodromy
image of $f^{m-r}:f^{-(m-r)}(W)\to W$.

For $0\leq s<m-r$, the inclusion
$f^r(B_s)=B_{s+r}\subset D_m$ gives
$W=f^{-r}(U_m)\subset U_{m-r}$.
The relative complement $U_{m-r}\setminus W$ is a hypersurface (or is
empty), so the same surjectivity argument makes
$\pi_1(W,v)\to\pi_1(U_{m-r},v)$ surjective. The preceding monodromy
image is therefore exactly $G_{m-r}(v)$, proving the second assertion.
\end{proof}

\subsection{\texorpdfstring{Level-$j$ fold pairs}{Level-j fold pairs}}

Before defining fold pairs, we record two consequences of simplicity
that control the ramification and branch loci at different levels.

\begin{lem}\label{lem:ram-level-separation}
The irreducible hypersurfaces $B_i$, $i\geq0$, together with all
irreducible components of the hypersurfaces $R_j$, $j\geq0$, are
pairwise distinct.
\end{lem}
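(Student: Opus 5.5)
The plan is to reduce every possible coincidence to a periodicity relation $B_n=B_0$ with $n\geq1$, and then to contradict \textup{(S4)}.

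\emph{Irreducibility.} By \textup{(S1)}, $R_f$ is an irreducible hypersurface. Since $f$ is finite, the image of an irreducible hypersurface under any iterate of $f$ is again an irreducible hypersurface. Hence each $B_i=f^{i+1}(R_f)$ is an irreducible hypersurface. Likewise $R_j=f^{-j}(R_f)$ is a hypersurface of pure dimension $k-1$, because $f^j$ is finite and surjective. For any irreducible component $C$ of $R_j$, the image $f^j(C)$ is an irreducible hypersurface contained in $R_f$, so
\[
 f^j(C)=R_f .
\]
Distinct components of a single $R_j$ are distinct by definition. So only coincidences across different members of the list need to be excluded.

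\emph{Key observation.} Suppose $B_n=B_0$ for some $n\geq1$. Then $B_{n+r}=f^r(B_n)=B_r$ for every $r\geq0$, so the sequence $(B_r)$ is periodic from the start. Consequently
\[
 \operatorname{PC}(f)=\bigcup_{r\geq0}B_r=\bigcup_{r=0}^{n-1}B_r
\]
is Zariski closed. Thus $f$ would be PCF, contradicting \textup{(S4)}. This is essentially the argument already used in Corollary~\ref{cor:no-totally-invariant-hypersurface}. It remains to produce such an $n$ in each of the three cases.

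\emph{Case $B_i=B_j$ with $i<j$.} Applying $f^{j-i}$ gives $B_j=f^{j-i}(B_i)$, so $f^{j-i}(B_i)=B_i$. Then every $B_r$ with $r\geq i$ lies in $\bigcup_{r=i}^{j-1}B_r$, so $\operatorname{PC}(f)$ is a finite union of the $B_r$ and is closed. This is again a contradiction with \textup{(S4)}. Equivalently, one can run the key observation on the tail of the sequence starting at $B_i$.

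\emph{Case: a common component $C$ of $R_j$ and $R_{j'}$ with $j<j'$.} Then $f^j(C)=R_f$ and $f^{j'}(C)=R_f$. Hence
\[
 f^{j'-j}(R_f)=f^{j'-j}\bigl(f^j(C)\bigr)=f^{j'}(C)=R_f .
\]
Applying $f$ once more gives $B_{j'-j}=f(R_f)=B_0$, and $j'-j\geq1$, so the key observation applies.

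\emph{Case $B_i=C$ for a component $C$ of $R_j$.} Then $B_{i+j}=f^j(B_i)=f^j(C)=R_f$. Applying $f$ gives $B_{i+j+1}=f(R_f)=B_0$, and $i+j+1\geq1$, so the key observation applies.

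I do not expect a genuine obstacle. The only point needing care is the indexing: one must check that each case really yields $B_n=B_0$ with $n\geq1$, and not merely $n\geq0$, so that the periodicity is nontrivial. It is also essential that images of irreducible hypersurfaces under the finite map $f$ remain irreducible hypersurfaces, so that each equality of irreducible sets above is legitimate.
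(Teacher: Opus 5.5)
Your proposal is correct and follows essentially the same route as the paper. You get irreducibility from \textup{(S1)} and finiteness, then turn each of the three possible coincidences into a periodicity of $R_f$ (equivalently $B_n=B_0$ with $n\geq1$), which makes $\operatorname{PC}(f)$ Zariski closed and contradicts \textup{(S4)}. The only difference is cosmetic: you route all three cases through the single relation $B_n=B_0$, whereas the paper phrases the first two cases as preperiodicity of $R_f$ and reduces the third to the first.
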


\begin{proof}
Each $B_i=f^{i+1}(R_f)$ is irreducible because $R_f$ is integral and
$f$ is finite.
If $B_i=B_j$ for some $j>i$, then
$f^{j+1}(R_f)=f^{i+1}(R_f)$, so $R_f$ is preperiodic and $f$ is PCF,
contradicting \textup{(S4)}.
Thus the $B_i$ are pairwise distinct.

Assume that an irreducible hypersurface $C$ is a common component of
$R_i$ and $R_j$ with $i<j$. Since $R_f$ is integral by \textup{(S1)},
$f^i(C)=f^j(C)=R_f$. Hence $f^{j-i}(R_f)=R_f$, again contradicting
\textup{(S4)}.

It remains to separate the two families. Let $C$ be an irreducible
component of $R_j$. Since $f^j$ is finite and $R_f$ is integral, one
has $f^j(C)=R_f$. If $C=B_i$ for some $i\geq0$, then
\[
        R_f=f^j(C)=f^j(B_i)=B_{i+j}.
\]
Applying $f$ gives $B_0=f(R_f)=B_{i+j+1}$, contradicting the pairwise
distinctness of the $B_i$ because $i+j+1>0$.
\end{proof}

\begin{lem}\label{lem:reduced-ramification-levels}
The divisor $\Rcal_j=(f^j)^*\Rcal_f$ is reduced for every $j\geq0$.
\end{lem}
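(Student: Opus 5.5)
We want to show $(f^j)^*\Rcal_f$ is reduced, i.e. that $f^j$ is unramified at a general point of each irreducible component $C$ of $R_j=f^{-j}(R_f)$. The case $j=0$ is \textup{(S1)}. For $j\geq1$, we argue via the chain rule. Since $\Rcal_f$ is reduced and integral, the multiplicity of $C$ in $(f^j)^*\Rcal_f$ equals the ramification index $e_C$ of $f^j$ along $C$. This index is the local degree of $f^j$ at a general point of $C$ in the direction transverse to $C$.

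By the chain rule $\Rcal_{f^j}=\sum_{i=0}^{j-1}\Rcal_i$, the map $f^j$ is ramified along $C$ only if $C$ is an irreducible component of some $R_i$ with $0\leq i<j$. But $C$ is a component of $R_j$, and Lemma~\ref{lem:ram-level-separation} says that components of $R_i$ and $R_j$ are pairwise distinct for $i\neq j$. Hence $C$ is not a component of $R_{f^j}$. So $f^j$ is étale at a general point of $C$, and $e_C=1$.

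Concretely, at a general point $p\in C$, the map $f^j$ is a local biholomorphism onto a neighborhood of a general point of $R_f$. Since $f^j(C)=R_f$ and $f$ is finite, a general point of $C$ maps to a general point of $R_f$. In such a neighborhood, $R_f$ is cut out by a reduced local equation $x_k$ (by \textup{(S1)}, or by the fold normal form of Lemma~\ref{lem:generic-fold}). Its pullback under a local biholomorphism is again a reduced local equation, namely that of $C$. Therefore $(f^j)^*\Rcal_f$ has multiplicity one along $C$.

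The only point needing care is the identification of the multiplicity of $C$ in $(f^j)^*\Rcal_f$ with the local pullback of a reduced equation at a general point. This is routine because divisors can be compared at the generic point of each component. The essential input is the level separation in Lemma~\ref{lem:ram-level-separation}, which relies on \textup{(S4)}; I expect no real obstacle beyond invoking it correctly. If one prefers a purely divisorial check, the ramification of $f^j$ along $C$ is $\operatorname{mult}_C\Rcal_{f^j}=\sum_{i<j}\operatorname{mult}_C\Rcal_i=0$, and then $\operatorname{mult}_C (f^j)^*\Rcal_f=e_C\cdot\operatorname{mult}_{R_f}\Rcal_f=1\cdot1$.
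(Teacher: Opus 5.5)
Your proof is correct and is essentially the paper's argument. You identify the multiplicity of $C$ in $(f^j)^*\Rcal_f$ with the ramification index of $f^j$ along $C$ (using \textup{(S1)}), observe that $f^j$ can ramify along $C$ only if $C$ lies in $\Supp\Rcal_{f^j}=R_0\cup\cdots\cup R_{j-1}$, and conclude with Lemma~\ref{lem:ram-level-separation}; the paper argues by contradiction where you argue directly, but the content is the same.
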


\begin{proof}
The case $j=0$ follows from \textup{(S1)}. Suppose that $j\geq1$ and
that $\Rcal_j$ is nonreduced along an irreducible component $C$. At
the generic points of $C$ and $R_f$, let $s=0$ and $t=0$ be local
equations. Since $\Rcal_f$ is reduced, $t\circ f^j=u s^e$ for a unit
$u$, and the coefficient of $C$ in $\Rcal_j$ is $e$; nonreducedness
gives $e>1$, so $f^j$ is ramified along $C$. Hence $C$ is contained
in the ramification divisor $\Rcal_{f^j}$, whose support is contained
in $R_0\cup\cdots\cup R_{j-1}$. Thus $C$ is also an irreducible
component of some $R_s$ with $s<j$, contradicting
Lemma~\ref{lem:ram-level-separation}.
\end{proof}

Fix $0\leq j<m$, and put $r:=m-j-1$. Set $E_{m,j}:=f^{m}(R_j)=B_r$.
We decompose $f^m=f^r f f^j$. There is a dense
Zariski open subset $E_{m,j}^{\circ}\subset E_{m,j}$ such that, for
every $a\in E_{m,j}^{\circ}$ and every $x\in R_j\cap f^{-m}(a)$, one
has
\begin{equation}\label{eq:etale-fold-etale}
x
\xrightarrow[\text{\'etale}]{f^j}
p:=f^j(x)\in R_f
\xrightarrow[\text{fold}]{f}
b:=f^{j+1}(x)\in B_f
\xrightarrow[\text{\'etale}]{f^r}
a\in E_{m,j}.
\end{equation}
To obtain $E_{m,j}^{\circ}$, delete from $B_r$ its singular locus and
its intersections with the other $B_i$, $0\leq i<m$ and $i\neq r$,
together with the
$f^m$-images of the loci in $R_j$ where $f^j$ is ramified, where $f$
is not a fold at $f^j(x)$, or where $f^r$ is ramified at
$f^{j+1}(x)$. Lemmas~\ref{lem:ram-level-separation} and
\ref{lem:generic-fold} show that the resulting open subset is dense.
Thus the first and last maps in \eqref{eq:etale-fold-etale} are locally
\'etale and the middle map is a fold. In particular, $f^m$ is
a fold at every such $x$.
Consequently, the monodromy of a meridian at $a$ is a product of
disjoint transpositions, one for each such fold point; see
Figure~\ref{fig:meridian-j1-m3}.

\begin{defi}[\emph{Level-$j$ fold pair}]\label{def:level-pair}
A \emph{level-$j$ fold pair at depth $m$} is an unordered pair
$\{x,x'\}\subset T_m(y)$ such that $(x\ x')$ is a transposition factor
in $\rho_m(\mu)$ for some meridian $\mu$ at a point of
$E_{m,j}^\circ$. We denote the set of all such pairs by
$\Fcal_{m,j}(y)$.

If $j=m-1$, so that $E_{m,j}=B_0$, the pair is called a
\emph{top-level fold pair}, and the image $\rho_m(\mu)\in G_m(y)$ of any
such meridian is called a \emph{top-meridian element at depth $m$}.
\end{defi}

Since meridians around $B_{m-j-1}$ are conjugate, shrinking
$E^\circ_{m,j}$ to a smaller dense Zariski open subset does not
change $\Fcal_{m,j}(y)$. Moreover, $\Fcal_{m,j}(y)$ is
$G_m(y)$-invariant.

Let $e=\{x,x'\}\in\Fcal_{m,j}(y)$. Transporting the local description
\eqref{eq:etale-fold-etale} to $y$, we obtain
\[
        f^{j+1}(x)=f^{j+1}(x'),
        \qquad
        f^s(x)\neq f^s(x')\quad(0\leq s\leq j).
\]

\subsection{Orbits and invariant equivalence relations}

The aim of this subsection is to prove the following proposition.

\begin{prop}
\label{prop:sc-monodromy-partitions}
Let $m\geq1$ and $y\in U_m$. For every
$G_m(y)$-invariant equivalence relation $\sim_R$
on $T_m(y)$, there is a unique $0\leq j\leq m$ such that
\[
 x\sim_R x'
 \quad\Longleftrightarrow\quad
 f^j(x)=f^j(x')
 \qquad(x,x'\in T_m(y)).
\]
\end{prop}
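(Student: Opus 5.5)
The proof reduces the statement to a single orbit claim. For $1\leq j\leq m$ put
\[
 \Omega_j:=\bigl\{\{x,x'\}\subset T_m(y)\mid f^j(x)=f^j(x'),\ f^{j-1}(x)\neq f^{j-1}(x')\bigr\}.
\]
The sets $\Omega_1,\ldots,\Omega_m$ partition the unordered pairs of distinct leaves, since $f^m(x)=y$ for every leaf. The claim is that each $\Omega_j$ is a single $G_m(y)$-orbit.

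Granting this, the proposition follows. If $\sim_R$ is equality, take $j=0$. Otherwise let $j\geq1$ be maximal with some pair of $\Omega_j$ related. Invariance makes every pair in $\Omega_j$ related. Then $f^j(x)=f^j(x')$ with $x\neq x'$ puts $\{x,x'\}$ in some $\Omega_t$ with $t\leq j$. For $t<j$, choose $x''$ with $f^{j-1}(x'')\neq f^{j-1}(x)$ and $f^j(x'')=f^j(x)$. Such a vertex exists because $d^k\geq2$ and descendants of any vertex are nonempty. Both $\{x,x''\}$ and $\{x',x''\}$ lie in $\Omega_j$, so transitivity gives $x\sim_R x'$. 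Conversely, if $f^j(x)\neq f^j(x')$, the pair lies in $\Omega_t$ with $t>j$ and is unrelated by maximality. Uniqueness of $j$ holds because the relations $f^j(x)=f^j(x')$ for $0\leq j\leq m$ are pairwise distinct. Indeed, each vertex of depth $m-j$ has $d^{kj}\geq 2$ descendants, so these partitions have distinct block sizes.

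For the orbit claim, note that $\{x,x'\}\in\Omega_j$ means $w:=f^{j}(x)=f^j(x')$ is a vertex at depth $m-j$, while $f^{j-1}(x)=:v$ and $f^{j-1}(x')=:v'$ are distinct children of $w$. By Lemma~\ref{lem:stabilizer-action}, $G_m(y)$ is transitive on depth $m-j$. The stabilizer of $w$ acts on $\Desc_m(w)$ through $G_j(w)$. It therefore suffices to prove that, for every $n\geq1$ and every base point $z\in U_n$, $G_n(z)$ acts transitively on pairs $\{x,x'\}$ of leaves whose parents at depth $1$ are distinct. This is applied with $n=j$ and $z=w$.

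$G_1(z)=S_{d^k}$ by Lemma~\ref{lem:G1-full}, so $G_n(z)$ is $2$-transitive on $T_1(z)$. It remains to show that the kernel $K_n=\ker(G_n(z)\to G_1(z))$ acts transitively on $\Desc_n(v)\times\Desc_n(v')$ for each pair of distinct $v,v'\in T_1(z)$. By Lemma~\ref{lem:stabilizer-action}, each factor is acted on transitively by $G_{n-1}(v)$ and $G_{n-1}(v')$ respectively.

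The main obstacle is showing that these two actions can be realized independently inside $K_n$. I would argue by induction on $n$, using (S3) to produce meridians around a component $B_i$, $i\geq1$, whose monodromy acts nontrivially below some depth-one vertices and trivially below others. A point of $B_f$ whose $f$-image lies in $f(B_f)$ but which is itself outside the nonempty other part of $f^{-1}(f(B_f))$ gives a fold pair lying under only some of the children.

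Commutators of such elements with conjugates that move one subtree while fixing another should lie in $K_n$ and act only under $v$. Using the normal generation of $G_{n-1}(v)$ by meridian images, one then obtains the full $G_{n-1}(v)$-action under $v$ while fixing the leaves under $v'$, and symmetrically. Independence of the distinct components $B_i$ (Lemma~\ref{lem:ram-level-separation}) and of the fold pairs across levels is what I expect to make this separation work. This step is the most delicate and I expect it to require a careful Goursat-type argument on the product of the subtree actions.
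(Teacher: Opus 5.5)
Your overall architecture matches the paper's. The sets $\Omega_j$, the maximal-$j$ argument with a third vertex, and the reduction of the orbit claim to transitivity of $K_n$ on $\Desc_n(v)\times\Desc_n(v')$ (via Lemma~\ref{lem:stabilizer-action} and $G_1=S_{d^k}$) are exactly Lemma~\ref{lem:sc-pair-orbits} and the proof of the proposition.

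The gap is in that last step, which carries all the content, and the mechanism you sketch does not close it. The (S3) construction, a meridian at a general point of $B_1$ as in Lemma~\ref{lem:kernel}(2), gives $\eta\in K_n$. Below each $v\in S$ it acts as a meridian of $B_0$ for the lower cover $f^{n-1}$, i.e.\ as a lower top-meridian element $a_v$; below each $v\notin S$ it acts trivially. Conjugating by $\sigma\in G_n(z)$ with $\sigma(s)=v$, $\sigma(t)=v'$ (where $s\in S$, $t\notin S$) already yields an element of $K_n$ with image $(a_v,1)$ in $L_v\times L_{v'}$. No commutators, and no isolation of a single subtree, are needed. Conjugating further by the image $P$, which surjects onto $L_v$, gives $H_v\times 1\subseteq P$ with $H_v=\langle\!\langle a_v\rangle\!\rangle_{L_v}$.

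That is all this construction yields. Normal generation of $G_{n-1}(v)$ by meridians of $B_0,\dots,B_{n-2}$ does not upgrade $H_v$ to all of $G_{n-1}(v)$. For that you would need elements of $K_n$ acting below $v$ as meridians of $B_i$ with $i\geq1$ and trivially below $v'$. Producing them would require an (S3)-type condition $f^{-1}(B_{i+1})\supsetneq B_i$ at every level, which is not a hypothesis. Neither Lemma~\ref{lem:ram-level-separation} nor a Goursat argument supplies it.

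The missing idea is that the full group is not needed: it suffices that $H_v$ be transitive on $T_{n-1}(v)$. This is not automatic, because normal subgroups of the imprimitive group $G_{n-1}(v)$ can be intransitive; $K_{n-1}$ itself is one. So transitivity must be built into the induction. The paper does this with the paired statements $\Tfrak_r$ (the normal closure of a top-meridian element is transitive) and $\Cfrak_r$ (the orbit graph of a top-level fold pair is connected). It proves $\Tfrak_{r-1}\Rightarrow$ two-coordinate extraction at level $r$ $\Rightarrow\Cfrak_r\Rightarrow\Tfrak_r$.

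Within your framework you can do the same by strong induction on the proposition itself:
\begin{enumerate}
\item By Lemma~\ref{lem:kernel}(1), $L_v=G_{n-1}(v)$. Since $H_v$ is normal in $L_v$, its orbits form a $G_{n-1}(v)$-invariant partition of $T_{n-1}(v)$.
\item By the proposition at level $n-1$ with base point $v\in U_{n-1}$, this partition is the fiber partition of some $f^{j'}$.
\item A transposition factor of $a_v$ is a top-level fold pair, so it lies in $\Omega_{n-1}$ and its two leaves share an $H_v$-orbit. This forces $j'=n-1$, i.e.\ $H_v$ is transitive.
\end{enumerate}
Then $H_v\times H_{v'}\subseteq P$ acts transitively on $\Desc_n(v)\times\Desc_n(v')$, which is exactly what your reduction requires.
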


\begin{defi}[\emph{Orbit graph $\Gamma_m(e;y)$}]
\label{def:orbit-graph}
For an unordered pair $e=\{a,b\}$ of distinct points in $T_m(y)$, define the
\emph{orbit graph} $\Gamma_m(e;y)$ to be the graph with vertex set
$T_m(y)$ and edge set
$G_m(y)\cdot e=\{\sigma(e)\mid\sigma\in G_m(y)\}$.
\end{defi}

We first analyze the kernel of the action on $T_1(y)$.
Assume $m\geq2$, fix $y\in U_m$, and put
\[
        K_m:=\ker(\theta_{m,1}:G_m(y)\to G_1(y)).
\]
The group $K_m$ acts trivially on $T_1(y)$ and hence preserves
$\Desc_m(v)$ for every $v\in T_1(y)$. For $v\in T_1(y)$, let
$L_v:=K_m|_{\Desc_m(v)}$ be the permutation group induced by $K_m$ on
$\Desc_m(v)$.

\begin{lem}\label{lem:kernel}
For $m\geq2$ the following assertions hold.
\begin{enumerate}
\item For every $v\in T_1(y)$,
      \[
             L_v=G_{m-1}(v)
      \]
      as permutation groups on
      $\Desc_m(v)=T_{m-1}(v)$.
\item There exist $\eta\in K_m$ and a nonempty proper subset
      $S\subsetneq T_1(y)$ such that
      \begin{enumerate}
      \item $\eta$ acts trivially on $\Desc_m(v)$ for every
            $v\in T_1(y)\setminus S$;
      \item $\eta$ acts on $\Desc_m(v)$ as a top-meridian element of
            the lower cover $f^{m-1}$ for every $v\in S$.
      \end{enumerate}
\end{enumerate}
\end{lem}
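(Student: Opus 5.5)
Both assertions come from one device. Take a meridian in $U_m$ around a component $B_i$ with $i\geq1$, at a point of $B_i$ over which $f$ is \'etale. Its monodromy on $T_1(y)$ is trivial, so it lies in $K_m$. On each $\Desc_m(v)$ it acts as the lifted loop based at $v$, which is either contractible in $U_{m-1}$ or a meridian of the lower cover $f^{m-1}$. Throughout I use that $f^{-1}(U_m)\subset U_{m-1}$, as in the proof of Lemma~\ref{lem:stabilizer-action}.

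\emph{Step 1: the inclusion $L_v\subseteq G_{m-1}(v)$.} We have $K_m\subset(G_m(y))_v$ for every $v\in T_1(y)$. By Lemma~\ref{lem:stabilizer-action}, the image of $(G_m(y))_v$ on $\Desc_m(v)$ is $G_{m-1}(v)$, which gives the inclusion.

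\emph{Step 2: the reverse inclusion.} The group $\pi_1(U_{m-1},v)$ is generated, not merely normally generated, by all meridians $c\delta c^{-1}$ around $B_0,\dots,B_{m-2}$, with all connecting paths $c$. By the surjectivity recalled before Definition~\ref{def:meridian}, I may require the paths $c$ to lie in $W:=f^{-1}(U_m)$. I may also take the transverse disks at general points $p\in B_{i-1}$ with $1\leq i\leq m-1$. Such $p$ lie off $R_f$ by Lemma~\ref{lem:ram-level-separation}. Moreover, $f(p)$ is a general point of $B_i$ whose $d^k$ preimages are distinct and avoid $R_f$; shrinking the disk, $f$ is \'etale near all of them and $f(\delta)$ is a positive transverse circle to $B_i$ in $U_m$. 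Set $\gamma:=f(c)\,f(\delta)\,f(c)^{-1}\in\pi_1(U_m,y)$.
\begin{itemize}
\item For each $w\in T_1(y)$, lift $f(c)$ from $w$ to some endpoint $w'$ over $f(p)$. The lift of $f(\delta)$ from $w'$ is a small closed circle, since $f$ is \'etale near $w'$. Returning along the same path, the lift of $\gamma$ from $w$ is closed. Hence $\rho_m(\gamma)\in K_m$.
\item For the lift based at $v$, we have $w'=p$ and the lift of $\gamma$ is exactly $c\delta c^{-1}$.
\end{itemize}
By the factorization of lifts used in Lemma~\ref{lem:stabilizer-action}, $\rho_m(\gamma)$ acts on $\Desc_m(v)$ as $\rho_{m-1,v}(c\delta c^{-1})$. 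Thus every generator of $G_{m-1}(v)$ lies in $L_v$, proving (1).

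\emph{Step 3: construction of $\eta$ in (2).} Since $m\geq2$, $B_1\subset D_m$. Let $a\in B_1$ be general, and let $\gamma=c\delta c^{-1}$ be a meridian of $B_1$ at $a$.

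Genericity of $a$ will ensure the following.
\begin{enumerate}
\item[(i)] $a$ avoids $f(B_0\cap R_f)$. This is a proper subset of $B_1$ because $B_0\neq R_f$ by Lemma~\ref{lem:ram-level-separation}.
\item[(ii)] $a$ avoids $B_j$ for $j\neq1$, and avoids $f(B_j)=B_{j+1}$ for $1\leq j\leq m-2$.
\item[(iii)] $a\notin B_0$, so $f$ is \'etale over $a$.
\end{enumerate}

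\emph{Triviality on $T_1(y)$.} By the same argument as in Step 2, $\eta:=\rho_m(\gamma)$ acts trivially on $T_1(y)$, so $\eta\in K_m$.

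\emph{Definition of $S$.} Transport the fiber $f^{-1}(a)$ to $T_1(y)$ along $f(c)$; more precisely, along the path $c$ used to define the meridian at $a$. Let $S$ be the set of $v\in T_1(y)$ whose associated preimage $q_v\in f^{-1}(a)$ lies in $B_0$.
\begin{itemize}
\item $S\neq\varnothing$, because $f(B_0)=B_1$.
\item $S\neq T_1(y)$, by (S3). Indeed, $f^{-1}(B_1)$ has an irreducible component other than $B_0$. Since $f$ is finite, that component surjects onto $B_1$, so the general point $a$ has a preimage off $B_0$.
\end{itemize}

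\emph{The action on $\Desc_m(v)$.}
\begin{itemize}
\item If $v\notin S$, then $q_v\notin D_{m-1}$ by (ii). The lifted small circle bounds a disk in $U_{m-1}$, so $\eta$ acts trivially on $\Desc_m(v)$.
\item If $v\in S$, then $q_v$ is a general point of $B_0$ by (i) and (ii), and $f$ is \'etale there. The lifted circle is therefore a positive transverse circle to $B_0$, winding once. Hence $\eta$ acts on $\Desc_m(v)$ as a meridian of $B_0$ for $f^{m-1}$, that is, as a top-meridian element at depth $m-1$.
\end{itemize}

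\emph{Main obstacle.} The delicate point is the bookkeeping of genericity in Steps 2 and 3. The chosen point must be a smooth point of exactly one branch component, \'etale for $f$ over it, and its lifted circles must hit only the intended components. I expect all of this to follow from Lemma~\ref{lem:ram-level-separation} by removing finitely many proper closed subsets. The one genuinely new input is (S3), used once to guarantee $S\neq T_1(y)$.
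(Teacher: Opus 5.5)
Your proposal is correct and follows the paper's proof. Part (2) is the same construction: a meridian of $B_1$ at a general point, $S$ defined by which preimages lie in $B_0$, and properness of $S$ from \textup{(S3)}. Part (1) uses the same device of pushing forward a meridian of $B_a$ based in $W=f^{-1}(U_m)$ to a meridian of $B_{a+1}$ that lies in $K_m$ and acts on $\Desc_m(v)$ as the original meridian.

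The only variation is the last step of (1). The paper takes one such meridian per component and concludes from $L_v\triangleleft G_{m-1}(v)$, which holds because $K_m\triangleleft G_m(y)$ and $(G_m(y))_v$ maps onto $G_{m-1}(v)$. You instead realize every meridian with connecting path in $W$ and use the surjectivity of $\pi_1(W,v)\to\pi_1(U_{m-1},v)$ to see that these generate; this works equally well.
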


\begin{proof}
(1) Fix $v\in T_1(y)$ and $0\leq a\leq m-2$. Choose a general point
$p\in B_{a+1}$ lying on no other component of $D_m$, and choose
$z\in B_a$ with $f(z)=p$. Since $p\notin B_0$, the map $f$ is
\'etale at $z$. A transverse disk to $B_{a+1}$ at $p$ meeting no other component of $D_m$ therefore
lifts near $z$ to a transverse disk to $B_a$.
Put $W:=f^{-1}(U_m)\subset U_{m-1}$. Let
$\widetilde\mu_a\in\pi_1(W,v)$ be a meridian of $B_a$ associated with
the lifted transverse disk, and put
$\mu_{a+1}:=f_*(\widetilde\mu_a)\in\pi_1(U_m,y)$.
 Its image in
$\pi_1(U_1,y)$ is trivial since the transverse disk at $p$ is
disjoint from $B_0$. Hence $\rho_m(\mu_{a+1})\in K_m$, and its
restriction to $\Desc_m(v)$ is the lower monodromy of a meridian of
$B_a$. These meridians normally generate $G_{m-1}(v)$. Moreover,
$K_m\triangleleft G_m(y)$, and Lemma~\ref{lem:stabilizer-action} identifies
the image of $(G_m(y))_v$ on $\Desc_m(v)$ with $G_{m-1}(v)$. Hence
$L_v\triangleleft G_{m-1}(v)$, and therefore $L_v=G_{m-1}(v)$,
proving (1).

(2) Choose a general point $p\in B_1$, a small transverse disk
$\Delta$ at $p$ meeting no other component of $D_m$, and let
$\mu_1\in\pi_1(U_m,y)$ be the resulting meridian based at $y$.
For each $v\in T_1(y)$, let $z_v\in f^{-1}(p)$ be the value at $p$
of the local inverse branch of $f$ corresponding to $v$, and define
$S:=\{v\in T_1(y)\mid z_v\in B_0\}$.
The set $S$ is nonempty because $f(B_0)=B_1$. It is proper by
condition \textup{(S3)}: there is an irreducible component of
$f^{-1}(B_1)$ distinct from $B_0$. This component dominates $B_1$
and therefore meets the fiber over a general $p$ outside $B_0$.

Put $\eta:=\rho_m(\mu_1)$. Since $\Delta$ is disjoint from $B_0$,
the image of $\mu_1$ in $\pi_1(U_1,y)$ is trivial, and hence
$\eta\in K_m$. If $v\in S$, the
lift of $\mu_1$ from $v$ is a meridian of $B_0$, so its action on
$\Desc_m(v)$ is a lower top-meridian element. If $v\notin S$, then
$z_v\notin D_{m-1}$. Indeed, $z_v\notin B_0$, and if
$z_v\in B_a$ for some $1\leq a\leq m-2$, then
$p=f(z_v)\in B_{a+1}$, contrary to the choice of $p$. The lifted
small circle is therefore null-homotopic in $U_{m-1}$, so $\eta$ acts
trivially on $\Desc_m(v)$. This proves (2).
\end{proof}

\begin{lem}
\label{lem:two-coordinate-extraction}
Let $r\geq2$, $y\in U_r$, and let
$v,w\in T_1(y)$ be distinct. The image $P$ of
\[
        K_r\longrightarrow L_v\times L_w
\]
contains a product $H_v\times H_w$, where
$H_v\triangleleft L_v$ and $H_w\triangleleft L_w$ are transitive
normal subgroups.
\end{lem}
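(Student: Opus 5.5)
The plan is a Goursat-type argument on the coordinate projections of $P\subset L_v\times L_w$. The two groups will be the \emph{fibre subgroups}
\[
 H_v:=\{\alpha\in L_v\mid (\alpha,1)\in P\},\qquad
 H_w:=\{\beta\in L_w\mid (1,\beta)\in P\}.
\]
Then $H_v\times H_w\subset P$ automatically. Since $P$ projects onto $L_v$ and onto $L_w$, and $\{(\alpha,1)\in P\}$ is normal in $P$, we get $H_v\triangleleft L_v$ and $H_w\triangleleft L_w$. So everything reduces to showing that $H_v$ is transitive on $\Desc_r(v)=T_{r-1}(v)$, and symmetrically for $H_w$.

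\emph{Step 1: $H_v$ contains a lower top-meridian element.} Take $\eta\in K_r$ and $S\subsetneq T_1(y)$ from Lemma~\ref{lem:kernel}(2). By Lemma~\ref{lem:G1-full}, $G_r(y)$ acts on $T_1(y)$ through $S_{d^k}$, which is $2$-transitive. Since $S$ is nonempty and proper, there is $\sigma\in G_r(y)$ with $v\in\sigma(S)$ and $w\notin\sigma(S)$. The conjugate $\eta':=\sigma\eta\sigma^{-1}$ lies in $K_r$ because $K_r$ is normal. It acts trivially on $\Desc_r(w)$, and on $\Desc_r(v)$ it acts as a top-meridian element of $f^{r-1}$ based at $v$; conjugation only moves the base meridian within its conjugacy class. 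Hence $H_v$ contains a lower top-meridian element.

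\emph{Step 2: $H_v$ contains the normal closure $N_{r-1}(v)$.} By Lemma~\ref{lem:kernel}(1), $L_v=G_{r-1}(v)$. All meridians of $B_0$ are conjugate in $\pi_1(U_{r-1},v)$, so every top-meridian element at depth $r-1$ is $G_{r-1}(v)$-conjugate to the one found in Step 1. Since $H_v\triangleleft L_v$, it follows that $H_v\supseteq N_{r-1}(v)$, the normal subgroup of $G_{r-1}(v)$ generated by top-meridian elements.

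\emph{Step 3 (main obstacle): $N_s(z)$ is transitive on $T_s(z)$ for every $s\geq1$ and $z\in U_s$.} I would prove this by induction on $s$.

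For $s=1$, top-meridian elements are transpositions by Lemma~\ref{lem:generic-fold}, and they generate $G_1(z)=S_{d^k}$, which is transitive.

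For $s\geq2$, the image of $N_s(z)$ in $G_1(z)$ is generated by the images of the top-meridian elements, which are transpositions. It is therefore all of $S_{d^k}$, so $N_s(z)$ is transitive on $T_1(z)$. It then suffices to show that, for one (hence every) $u\in T_1(z)$, the stabilizer part $N_s(z)\cap K_s$ acts transitively on $\Desc_s(u)$. To produce such elements I would use commutators:
\begin{itemize}
\item Take $\tau$ a top-meridian element whose transposition on $T_1(z)$ swaps $u$ with some $u'$.
\item Take $\eta''\in K_s$, a conjugate of $\eta$ as in Step 1, acting as a lower top-meridian element on $\Desc_s(u)$ and trivially on $\Desc_s(u')$, and choose it to commute with $\tau$ on the remaining subtrees. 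For example, the descendants of those subtrees are disjoint from the support of $\tau$, except for a controlled set.
\end{itemize}
Then $[\tau,\eta'']\in N_s(z)\cap K_s$. On $\Desc_s(u)$ it acts as a lower top-meridian element, up to a factor transported from $\Desc_s(u')$ that I expect to be trivial. Conjugating by the stabilizer of $u$, whose image on $\Desc_s(u)$ is $G_{s-1}(u)$ by Lemma~\ref{lem:stabilizer-action}, the restrictions to $\Desc_s(u)$ of elements of $N_s(z)\cap K_s$ contain $N_{s-1}(u)$. This is transitive by the induction hypothesis.

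Controlling the restriction of $\eta''$ and $\tau$ to the subtrees other than those of $u,u'$, so that the commutator really localizes, is the delicate point. If direct localization fails, I would instead iterate Step 1 with several conjugates of $\eta$ to isolate a single subtree.

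Applying Step 3 with $s=r-1$ and $z=v$ (trivially if $r-1=0$ is excluded, since $r\geq2$) gives that $H_v$ is transitive on $\Desc_r(v)$. The symmetric argument, with the roles of $v$ and $w$ exchanged in Step 1, gives the same for $H_w$, which proves the lemma.
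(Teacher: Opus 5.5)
Your proposal is correct. Steps~1--2 are the paper's reduction: the paper takes $H_v:=\langle\!\langle a_v\rangle\!\rangle_{L_v}$ and checks $H_v\times1\subset P$ by conjugating with elements $(\ell,b)\in P$, and your fibre subgroup contains this one.

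The genuine difference is in Step~3, which is the statement the paper calls $\Tfrak_s$. The paper does not use a commutator. It runs the bootstrap
\[
 \Tfrak_{s-1}\ \Rightarrow\ \text{the lemma at level } s\ \Rightarrow\ \Cfrak_s\ \Rightarrow\ \Tfrak_s .
\]
Here $\Cfrak_s$ is obtained by showing that the orbit graph of a top-level fold pair contains the complete bipartite graph between $\Desc_s(w)$ and $\Desc_s(w')$ for all $w\neq w'$. The last implication holds because both endpoints of every edge in that orbit lie in one orbit of the normal closure of $\tau$.

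Your commutator argument proves $\Tfrak_{s-1}\Rightarrow\Tfrak_s$ directly, and the point you flag as delicate is not an issue. Put $b:=\eta''|_{\Desc_s(u)}$ and $\psi:=\tau|_{\Desc_s(u')}\colon\Desc_s(u')\to\Desc_s(u)$. The element $c:=\tau\eta''\tau^{-1}\eta''^{-1}$ lies in $N_s(z)\cap K_s$ and satisfies
\[
 c|_{\Desc_s(u)}=\psi\,(\eta''|_{\Desc_s(u')})\,\psi^{-1}\,b^{-1}=b^{-1}.
\]
This uses only two facts: $\tau$ interchanges $\Desc_s(u)$ and $\Desc_s(u')$, and $\eta''$ is trivial on $\Desc_s(u')$. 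So the transported factor you expected to be trivial is indeed trivial.

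What $c$ does on the other subtrees is irrelevant. You only need the restriction of the normal subgroup $N_s(z)\cap K_s$ to $\Desc_s(u)$ to be transitive, so neither localization nor your fallback plan is needed.

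Your route is more elementary and self-contained: it uses one explicit element, no orbit graphs, and $\Tfrak_s$ no longer depends on the lemma at level $s$. The paper's route avoids explicit element computations and sets up the orbit-graph viewpoint that it reuses immediately in Lemma~\ref{lem:sc-pair-orbits}.
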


\begin{proof}
For $r\geq1$, the following assertions are understood to hold for
every base point $y\in U_r$.
\begin{description}
\item[$\Tfrak_r$] For every top-meridian element $\tau\in G_r(y)$,
the normal closure
$\langle\!\langle\tau\rangle\!\rangle_{G_r(y)}$ acts transitively on
$T_r(y)$.
\item[$\Cfrak_r$] For every top-level fold pair
$e\in\Fcal_{r,r-1}(y)$, the orbit graph $\Gamma_r(e;y)$ is connected.
\end{description}

We first show that, if $\Tfrak_{r-1}$ holds, then the conclusion
of the lemma holds for $r$.
Choose $\eta\in K_r$ and
$\varnothing\neq S\subsetneq T_1(y)$ as in
Lemma~\ref{lem:kernel}(2), and choose
$s\in S$ and $t\in T_1(y)\setminus S$. Since
$G_1(y)=S_{d^k}$ and $\theta_{r,1}$ is surjective, there is
$\sigma\in G_r(y)$ such that $\sigma(s)=v$ and $\sigma(t)=w$.
Normality of $K_r$ gives $\eta_v:=\sigma\eta\sigma^{-1}\in K_r$.
Its image in $L_v\times L_w$ has the form $(a_v,1)$, where $a_v$ is
a lower top-meridian element. Put
$H_v:=\langle\!\langle a_v\rangle\!\rangle_{L_v}$.
By Lemma~\ref{lem:kernel}(1), $L_v=G_{r-1}(v)$, and the assertion
$\Tfrak_{r-1}$ shows that $H_v$ is transitive on $\Desc_r(v)$.

The projection $P\to L_v$ is surjective. Hence, for every
$\ell\in L_v$, choose $b\in L_w$ with $(\ell,b)\in P$. Then
$(\ell,b)(a_v,1)(\ell,b)^{-1}
=(\ell a_v\ell^{-1},1)\in P$.
These elements generate $H_v\times1$, so $H_v\times1\subset P$.
Choosing instead an element of $G_r(y)$ sending $s$ to $w$ and $t$
to $v$ gives, by the same argument, a transitive normal subgroup
$H_w\triangleleft L_w$ with $1\times H_w\subset P$. Therefore
$H_v\times H_w\subset P$.

We prove the two connectivity assertions simultaneously.

\emph{Step 1: $\Tfrak_1$ and $\Cfrak_1$ hold.}
This follows from $G_1(y)=S_{d^k}$. A meridian around $B_0$ acts as
a transposition, whose normal closure is the full symmetric group,
and the orbit of one fold pair is the complete graph on $T_1(y)$.

\emph{Step 2: $\Tfrak_{r-1}$ implies $\Cfrak_r$.}
Let $r\geq2$ and fix
$e=\{x,x'\}\in\Fcal_{r,r-1}(y)$. Put $v:=f^{r-1}(x)$ and
$v':=f^{r-1}(x')$.
Equation~\eqref{eq:etale-fold-etale} gives $v\neq v'$. Since $G_1(y)$ is the full
symmetric group and $\theta_{r,1}$ is surjective, the $G_r(y)$-orbit
of $e$ contains an edge
$e_{w,w'}=\{z,z'\}$ below every unordered pair of distinct vertices
$w,w'\in T_1(y)$. Given arbitrary
$\alpha\in\Desc_r(w)$ and $\beta\in\Desc_r(w')$,
the preceding argument supplies
$\kappa\in K_r$ with $\kappa(z)=\alpha$ and $\kappa(z')=\beta$.
Thus $\{\alpha,\beta\}\in G_r(y)\cdot e$. The orbit graph therefore
contains the complete bipartite graph between $\Desc_r(w)$ and
$\Desc_r(w')$ for every pair $w\neq w'$, and hence is connected.

\emph{Step 3: $\Cfrak_r$ implies $\Tfrak_r$.}
Let $\tau\in G_r(y)$ be a top-meridian element and let $e$ be one of
its transposition factors. For every $g\in G_r(y)$, the element
$g\tau g^{-1}$ belongs to the normal closure of $\tau$ and has $g(e)$
as a transposition factor. Hence the endpoints of every edge in
$G_r(y)\cdot e$ lie in the same orbit of that normal closure. By
$\Cfrak_r$, the graph is connected, so the normal closure acts
transitively. Starting from Step 1 and alternating Steps 2 and 3
proves $\Tfrak_r$ for all $r\geq1$. The preceding argument then
proves the lemma.
\end{proof}

We next determine the edge sets of the orbit graphs
in Definition~\ref{def:orbit-graph}.
Fix $m\geq1$ and $y\in U_m$.
Let $e=\{x,x'\}\subset T_m(y)$, with $x\ne x'$, and put
$j:=\min\{1\leq r\leq m\mid f^r(x)=f^r(x')\}$.

\begin{lem}
\label{lem:sc-pair-orbits}
The edge set of $\Gamma_m(e;y)$ is $\Omega_j$, where
\begin{equation}\label{eq:sc-pair-orbits}
 \Omega_j:=\bigl\{\{x,x'\}\subset T_m(y)\mid
 f^j(x)=f^j(x'),\quad
 f^{j-1}(x)\ne f^{j-1}(x')\bigr\}.
\end{equation}
\end{lem}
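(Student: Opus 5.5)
We prove that $G_m(y)\cdot e=\Omega_j$ by induction on $m$. The inclusion $G_m(y)\cdot e\subseteq\Omega_j$ is immediate from the equivariance \eqref{eq:tree-equivariance} with $r=0$ applied at each level. Indeed, every $\sigma\in G_m(y)$ commutes with the maps $f^s:T_m(y)\to T_{m-s}(y)$ through $\theta_{m,m-s}$. Moreover, $\theta_{m,m-s}(\sigma)$ acts by a permutation on $T_{m-s}(y)$. Hence $f^s(\sigma x)=f^s(\sigma x')$ holds if and only if $f^s(x)=f^s(x')$. So the invariant $j$ is preserved. The content is the reverse inclusion: $G_m(y)$ acts transitively on $\Omega_j$.

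\emph{Case $j=m$.} Here $v:=f^{m-1}(x)$ and $w:=f^{m-1}(x')$ are distinct vertices of $T_1(y)$. Given any $\{\alpha,\beta\}\in\Omega_m$, the points $f^{m-1}(\alpha)$ and $f^{m-1}(\beta)$ are distinct vertices $v',w'\in T_1(y)$. Since $G_1(y)=S_{d^k}$ (Lemma~\ref{lem:G1-full}) and $\theta_{m,1}$ is surjective, we can choose $\sigma\in G_m(y)$ with $\sigma(v)=v'$ and $\sigma(w)=w'$. Replacing $e$ by $\sigma(e)$, we may assume $v=v'$ and $w=w'$. For $m\geq2$, Lemma~\ref{lem:two-coordinate-extraction} gives transitive subgroups $H_v\times H_w$ inside the image of $K_m$ in $L_v\times L_w$. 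Hence some $\kappa\in K_m$ sends $x\mapsto\alpha$ and $x'\mapsto\beta$. This is exactly the argument of Step~2 in the proof of Lemma~\ref{lem:two-coordinate-extraction}, now for an arbitrary pair rather than a fold pair. The case $m=1$ is the $2$-transitivity of $S_{d^k}$.

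\emph{Case $1\leq j<m$.} Here $u:=f^{j}(x)=f^{j}(x')$ is a single vertex of $T_{m-j}(y)$, and $x,x'\in\Desc_m(u)=T_j(u)$. Take another pair $\{\alpha,\beta\}\in\Omega_j$, and let $u':=f^j(\alpha)$. By Lemma~\ref{lem:stabilizer-action}, $G_m(y)$ is transitive on $T_{m-j}(y)$, so we may move $e$ to assume $u'=u$. Lemma~\ref{lem:stabilizer-action} also says that the stabilizer $(G_m(y))_u$ acts on $\Desc_m(u)$ through the full group $G_j(u)$. Note that $u\in U_j$ as shown there. Inside the tree rooted at $u$, the pairs $\{x,x'\}$ and $\{\alpha,\beta\}$ both first coalesce at the top level $j$, that is, they lie in the set $\Omega_j$ for depth $j$ over the root $u$. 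Applying the case $j=m$ with $(m,y)$ replaced by $(j,u)$ gives $g\in G_j(u)$ with $g\{x,x'\}=\{\alpha,\beta\}$. Lifting $g$ to $(G_m(y))_u$ finishes the argument. Thus no genuine induction is needed beyond the top-level case applied at all depths, which is legitimate because Lemma~\ref{lem:two-coordinate-extraction} is stated for every $r\geq2$ and every base point.

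\textbf{Main obstacle.} The step needing the most care is the top-level case. There one must check that Lemma~\ref{lem:two-coordinate-extraction} really yields simultaneous control of both coordinates. The image $P$ contains $H_v\times H_w$ with each factor transitive, so the pair $(x,x')$ can be moved to any $(\alpha,\beta)\in\Desc_m(v)\times\Desc_m(w)$; unordered pairs are then handled by also using the swap of $v,w$ in $S_{d^k}$. A second point to check is the identification of $\Desc_m(u)$ with $T_j(u)$ compatibly with the maps $f^s$. This ensures that ``first coalescing at level $j$'' over $y$ becomes ``first coalescing at the top level'' over $u$.
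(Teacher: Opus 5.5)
Your proposal is correct and follows essentially the same route as the paper: you reduce via Lemma~\ref{lem:stabilizer-action} to the stabilizer of the common ancestor $u\in T_{m-j}(y)$ acting through $G_j(u)$, match the depth-one ancestors using $G_1(u)=S_{d^k}$, and then apply Lemma~\ref{lem:two-coordinate-extraction} (for $j\geq2$) to move both leaves simultaneously within their subtrees. One minor slip: the invariance of $\Omega_j$ uses \eqref{eq:tree-equivariance} with $r=m-s$ for each level $s$, not with $r=0$, which is what your next sentence actually says.
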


\begin{proof}
The sets $\Omega_1,\ldots,\Omega_m$ partition the set of unordered
pairs of distinct points in $T_m(y)$ and are $G_m(y)$-invariant by
Equation~\eqref{eq:tree-equivariance}.
Since $e\in\Omega_j$, its orbit is contained in $\Omega_j$.
By Lemma~\ref{lem:stabilizer-action},
$G_m(y)$ is transitive on $T_{m-j}(y)$, and the stabilizer of
$v\in T_{m-j}(y)$ induces the full group $G_j(v)$ on $T_j(v)$;
here $v\in U_j$. It therefore suffices to show that $(G_m(y))_v$
acts transitively on the pairs in $\Omega_j$ with common ancestor $v$.
Take two such pairs $\{a,b\}$ and $\{a',b'\}$.
In each pair, the two leaves have distinct ancestors in $T_1(v)$.
Since $G_j(v)\twoheadrightarrow G_1(v)=S_{d^k}$, there is
$\sigma\in G_j(v)$ sending the ancestors of $a,b$ in $T_1(v)$
to those of $a',b'$, respectively.
For $j\geq2$, Lemma~\ref{lem:two-coordinate-extraction} gives an element
$\kappa\in\ker(G_j(v)\to G_1(v))$ such that
$\kappa(\sigma(a))=a'$ and $\kappa(\sigma(b))=b'$.
Thus $\kappa\sigma$ sends $\{a,b\}$ to $\{a',b'\}$.
For $j=1$, these ancestors are the leaves themselves,
so $\sigma$ already sends $\{a,b\}$ to $\{a',b'\}$.
Thus $\Omega_j$ is one orbit.
\end{proof}

\begin{proof}[Proof of Proposition~\ref{prop:sc-monodromy-partitions}]
Use the notation of Lemma~\ref{lem:sc-pair-orbits}.
Let $\sim_R$ be such a relation. If it is the equality relation,
the assertion holds with $j=0$. Otherwise, let $j$ be the largest
index such that $a\sim_R b$ for some $\{a,b\}\in\Omega_j$.
Since $\sim_R$ is symmetric and $G_m(y)$-invariant and
$\Omega_j$ is one orbit, this holds for every $\{a,b\}\in\Omega_j$.
Fix $v\in T_{m-j}(y)$ and take $a,b\in T_j(v)$. If
$f^{j-1}(a)\ne f^{j-1}(b)$, then $\{a,b\}\in\Omega_j$, so $a\sim_R b$.
If $f^{j-1}(a)=f^{j-1}(b)$, choose
$c\in T_j(v)$ with $f^{j-1}(c)\ne f^{j-1}(a)=f^{j-1}(b)$.
Then
$\{a,c\},\{c,b\}\in\Omega_j$, and transitivity of $\sim_R$ gives $a\sim_R b$.
Therefore every fiber of $f^j:T_m(y)\to T_{m-j}(y)$
is contained in one $\sim_R$-class. Conversely, if $f^j(a)\ne f^j(b)$,
then $\{a,b\}\in\Omega_r$ for some $r>j$, so $a\sim_R b$ would
contradict the maximality of $j$.
We have proved that $a\sim_R b$ if and only if $f^j(a)=f^j(b)$
for a unique $0\leq j\leq m$.

\end{proof}

\section{Rational semiconjugacy rigidity and proof of Theorem~\ref{thm:main-semiconjugacy}}
\label{sec:semiconjugacy-rigidity}

\subsection{Decomposition and deck transformations}

In this subsection, we deduce from Section~\ref{sec:finite-level-monodromy}
two rigidity results concerning decompositions of iterates and deck transformations.

For a finite surjective morphism $a:X\to Z$ between normal
varieties, we write $\Rcal_a$ for the effective ramification
Weil divisor. We put
$B_a:=a(\Supp\Rcal_a)$, with reduced structure.
These definitions agree with the earlier notation when the
varieties are smooth.
If $Z$ is smooth, Zariski--Nagata purity
\cite[Exp.~X, Thm.~3.1]{SGA1} implies that $a$ is \'etale
over $Z\setminus B_a$.

\begin{thm}
\label{thm:sc-normal-factors}
Let $f\in\End_d^k$ be simple and let $m\geq1$. Suppose
\[
 f^m=P Q,
 \qquad Q:\Pbb^k\dashrightarrow Z,
 \qquad P:Z\dashrightarrow\Pbb^k,
\]
where $Z$ is normal, integral and projective and both maps are dominant rational.
Then there are an integer $0\leq j\leq m$ and a birational map
$\alpha:Z\dashrightarrow\Pbb^k$ such that
\[
 Q=\alpha^{-1}f^j,
 \qquad P=f^{m-j}\alpha.
\]
Moreover, if both $P$ and $Q$ are finite morphisms, then $\alpha$
is an isomorphism.
\end{thm}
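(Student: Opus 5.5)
The plan is to convert the factorization $f^m=PQ$ into a monodromy-invariant equivalence relation on the leaves $T_m(y)$, apply Proposition~\ref{prop:sc-monodromy-partitions} to identify it with the fibers of some $f^j$, and then recover $\alpha$ from the equality of function fields.

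\textbf{Step 1: reduction to finite étale covers.} Pass to function fields. The factorization gives a tower $\Cbb(\Pbb^k)\subset Q^*\Cbb(Z)\subset\Cbb(\Pbb^k)$, where the big field is pulled back by $f^m$. Since $Z$ is normal and projective, $Z$ is recovered from $\Cbb(Z)$ up to birational equivalence. Choose a dense Zariski open $V\subset U_m$ with the following properties:
\begin{itemize}
\item over $V$, the map $P$ restricts to a finite étale morphism $P^{-1}(V)\to V$ from an open set of $Z$;
\item $Q$ restricts to a finite étale morphism $f^{-m}(V)\to P^{-1}(V)$.
\end{itemize}
This uses generic finiteness and generic smoothness, after removing the indeterminacy loci and their images. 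Fix $y\in V$. Define $x\sim_Q x'$ on $T_m(y)$ by $Q(x)=Q(x')$. The inclusion $V\subset U_m$ gives a surjection $\pi_1(V,y)\to\pi_1(U_m,y)$, so $\pi_1(V,y)$ also surjects onto $G_m(y)$. Lifting paths along $P$ and then $Q$ shows that $\sim_Q$ is invariant under $\pi_1(V,y)$, hence under $G_m(y)$.

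\textbf{Step 2: identifying the fibers.} Proposition~\ref{prop:sc-monodromy-partitions} gives a unique $j$ with $Q(x)=Q(x')\iff f^j(x)=f^j(x')$ on $T_m(y)$. This $j$ is locally constant in $y\in V$, and $V$ is connected, so $j$ is independent of $y$. Hence $Q$ and $f^j$ have the same fibers over a dense open set. In function-field terms, by Galois correspondence for the étale cover $f^m$ over $V$, intermediate covers correspond to $G_m(y)$-invariant partitions (blocks of the monodromy action). The two intermediate fields $Q^*\Cbb(Z)$ and $(f^j)^*\Cbb(\Pbb^k)$ therefore correspond to the same block system, so they coincide. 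This yields a birational map $\alpha:Z\dashrightarrow\Pbb^k$ with $\alpha Q=f^j$. Then
\[
f^{m-j}\alpha Q=f^m=PQ,
\]
and since $Q$ is dominant we may cancel it to get $P=f^{m-j}\alpha$.

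\textbf{Step 3: the finite case.} Suppose $P$ and $Q$ are finite morphisms. Then $f^j$ and $Q$ are finite morphisms from $\Pbb^k$, and $\alpha$ identifies $Z$ with the normalization of $\Pbb^k$ in the function field of $\Pbb^k$ over $Q$. Indeed, $Z$ is normal and $Q$ is finite, so $Z$ is the normalization of $\Pbb^k$ in $Q^*\Cbb(Z)$. The same holds for $\Pbb^k$ via $f^j$, read through the target $\Pbb^k$ of $f^m$ using $P$, which is also finite. Because the two function fields agree and both varieties are normal and finite over the base $\Pbb^k$ via $P$ and $f^{m-j}$ respectively, uniqueness of normalization shows that $\alpha$ extends to an isomorphism.

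\textbf{Main obstacle.} I expect the main obstacle to be the bookkeeping in Step 1. Rational maps $P$ and $Q$ need not be étale anywhere a priori, and the correspondence between invariant partitions and intermediate fields must be justified over an open set where everything is a genuine covering. I would handle this by working entirely with the normalization of $\Pbb^k$ in the function fields and shrinking the base open set as needed. I would also need to check that the equality of block systems at one fiber pins down the subfield, which is the standard Galois correspondence for connected finite étale covers.
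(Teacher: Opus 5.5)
Your proposal is correct and follows essentially the same route as the paper. The paper also restricts to an open $V\subset U_m$ over which $f^{-m}(V)\xrightarrow{Q}Z_V\xrightarrow{P}V$ is a tower of finite \'etale covers, applies Proposition~\ref{prop:sc-monodromy-partitions} to the relation $Q(x)=Q(x')$ on $T_m(y)$, and recovers $\alpha$ from the equality of generic fibers of $Q$ and $f^j$. Two variations differ: the paper builds $\alpha$ from the closure of the image of $(Q,f^j)$ in $Z\times\Pbb^k$ rather than from the Galois correspondence, and it handles the finite case by applying Zariski's Main Theorem to the two projections rather than by uniqueness of normalization.

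There is one slip in Step~3. The claim that $Z$ is the normalization of $\Pbb^k$ in $Q^*\Cbb(Z)$ has the direction reversed, since that field is a subfield of $\Cbb(\Pbb^k)$. Your concluding argument is nonetheless correct: $Z$ and the source $\Pbb^k$ are both normal and finite over the target $\Pbb^k$ via $P$ and $f^{m-j}$, and their function fields are identified by $\alpha^*$ compatibly with $P^*=\alpha^*(f^{m-j})^*$. That argument in fact uses only the finiteness of $P$.
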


\begin{proof}
Choose open subsets $Z^\circ\subset Z$ and
$V^\circ\subset\Pbb^k$ such that
$P:Z^\circ\to V^\circ$ is finite \'etale.
Let $X^\circ\subset\Pbb^k$ be an open subset on which $Q$
is defined and $Q(X^\circ)\subset Z^\circ$. Put
\[
 V:=(U_m\cap V^\circ)\setminus f^m(\Pbb^k\setminus X^\circ),
 \qquad Z_V:=Z^\circ\cap P^{-1}(V).
\]
We obtain a factorization by connected finite \'etale covers
\[
 f^{-m}(V)\xrightarrow{\,Q\,}Z_V
 \xrightarrow{\,P\,}V.
\]
For $y\in V$, the relation $x\sim_Q x'\Longleftrightarrow Q(x)=Q(x')$
on $T_m(y)$ is monodromy-invariant.
By Proposition~\ref{prop:sc-monodromy-partitions} and path lifting,
there is a unique integer $0\leq j\leq m$, independent of $y$, such that
\begin{equation}\label{eq:sc-Q-fibers}
 Q(x)=Q(x')
 \quad\Longleftrightarrow\quad
 f^j(x)=f^j(x')
 \qquad (x,x'\in T_m(y)).
\end{equation}

Let $\Gamma$ be the reduced closure of the image of $(Q,f^j)$ in
$Z\times\Pbb^k$.
The two projections $p_1:\Gamma\to Z$ and
$p_2:\Gamma\to\Pbb^k$ are birational by the equivalence above.
Set $\alpha:=p_2p_1^{-1}:Z\dashrightarrow\Pbb^k$.
By construction, $\alpha Q=f^j$ and $f^{m-j}\alpha=P$,
so the following diagram commutes, where dashed arrows denote rational maps:
\[
\begin{tikzcd}[column sep=large, row sep=large]
 & & Z \arrow[dd,dashed,"\alpha"] \arrow[dr,dashed,"P"] & \\
 \Pbb^k \arrow[r,dashed,"{(Q,f^j)}"]
 \arrow[urr,dashed,bend left=15,"Q"]
 \arrow[drr,bend right=15,"f^j"']
 & \Gamma \arrow[ur,"p_1"] \arrow[dr,"p_2"']
 & & \Pbb^k \\
 & & \Pbb^k \arrow[ur,"f^{m-j}"'] &
\end{tikzcd}
\]

If both $P$ and $Q$ are finite morphisms, the two projections $p_1,p_2$
are isomorphisms by Zariski's Main Theorem
\cite[Corollary 4.4.6]{Liu2002}. Hence $\alpha$ is an isomorphism.
\end{proof}

\begin{thm}
\label{thm:sc-deck-rigidity}
Let $f\in\End_d^k$ be simple and let $m\geq1$. If
$\delta\in\Aut(\Pbb^k)$ satisfies
\[
 f^m\delta=f^m,
\]
then $\delta=\id$.
\end{thm}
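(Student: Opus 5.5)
The plan is to let $\delta$ act as a deck transformation on the rooted preimage tree and show that its permutation of the leaves is trivial.

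Since $f^m\delta=f^m$, the automorphism $\delta$ preserves every fiber of $f^m$. In particular it preserves the branch locus $D_m$ and its preimage $f^{-m}(D_m)$, so it restricts to a deck transformation of the connected finite \'etale cover $f^m:f^{-m}(U_m)\to U_m$. Fix a base point $y\in U_m$. Then $\delta$ permutes $T_m(y)=f^{-m}(y)$. Because $\delta$ commutes with all lifts of paths, this permutation commutes with the whole monodromy group $G_m(y)$.

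We then apply Proposition~\ref{prop:sc-monodromy-partitions} to the relation
\[
x\sim x' \iff x'=\delta^n(x)\ \text{for some } n.
\]
This relation is $G_m(y)$-invariant, since $\delta$ centralizes $G_m(y)$. Hence its classes are exactly the fibers of $f^j$ on $T_m(y)$ for some $0\leq j\leq m$. If $j=0$, then $\delta$ fixes every point of $T_m(y)$. Since $y$ is an arbitrary point of the dense open set $U_m$, the automorphism $\delta$ is the identity on a dense open set, hence $\delta=\id$.

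It remains to exclude $j\geq1$, which is the main obstacle. In that case the cyclic group $\langle\delta\rangle$ acts transitively on each fiber $T_j(v)$ for $v\in T_{m-j}(y)$, and these fibers have $d^{kj}\geq 4$ elements. The argument would go as follows.

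\begin{enumerate}
\item A permutation commuting with a transitive group acts semiregularly: every element of the centralizer that fixes one leaf fixes all leaves. So $\delta$ fixes no leaf unless $\delta$ acts trivially. It follows that the cyclic group generated by $\delta$ acts simply transitively on each fiber $T_j(v)$.
\item The equivalence relation ``same $\langle\delta\rangle$-orbit'' is then the $f^j$-fiber relation, so $f^j\delta=f^j$ on a dense open set, hence everywhere.
\item It therefore suffices to treat $m=j$, with $\langle\delta\rangle$ simply transitive on $T_j(y)$. The point stabilizer of $G_j(y)$ then commutes with $\delta$ and fixes $x$, so it fixes all of $\langle\delta\rangle x=T_j(y)$. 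Thus $G_j(y)$ acts regularly, and $|G_j(y)|=d^{kj}$.
\item This contradicts Lemma~\ref{lem:G1-full} via the surjection $G_j(y)\to G_1(y)=S_{d^k}$. Indeed $|S_{d^k}|=(d^k)!>d^{kj}$ when $j=1$, and for general $j$ a regular group cannot surject onto $S_{d^k}$ while acting on a tree of that size. More simply, $G_1(y)=S_{d^k}$ is not regular on $T_1(y)$, yet $\delta$ induces on $T_1(y)$ a permutation commuting with $S_{d^k}$, and for $d^k\geq3$ the only such permutation is the identity.
\end{enumerate}

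The last observation gives a cleaner route, which is the one to use. Since $f(\delta x)=\ldots$ is not directly available, we use equivariance: $\delta$ permutes $T_m(y)$ compatibly with the tree projection only if it commutes with $f^{m-r}$, which we do not know. So we argue with Proposition~\ref{prop:sc-monodromy-partitions} as above. By step 2 we get $f^j\delta=f^j$, and hence $\delta$ induces a well-defined map on $T_{j-1}$-level vertices $f^{j-1}(x)\mapsto f^{j-1}(\delta x)$. This map commutes with $G_1$ of the corresponding vertex, which is the full symmetric group $S_{d^k}$ with $d^k\geq4$, and it is nontrivial. That is a contradiction, because the centralizer of $S_{d^k}$ in $\operatorname{Sym}(d^k)$ is trivial.

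Verifying that this induced map is well defined is where care is needed. It follows from $f^j\delta=f^j$ only after replacing $\delta$ by an argument at the bottom level $j=1$. Namely, we apply the whole reasoning to $f^j\delta=f^j$ with $m:=j$ and induct on $j$; if this gets tangled, we fall back on the regularity count in step 4.
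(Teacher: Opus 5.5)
Your Steps 1--3 are sound, and they follow a different route from the paper's. You apply Proposition~\ref{prop:sc-monodromy-partitions} to the $\langle\delta\rangle$-orbit relation, use that a permutation centralizing a transitive group is semiregular, and reduce to $f^j\delta=f^j$ with $j\geq1$. From this you correctly get that $\langle\delta\rangle$ is simply transitive on $T_j(v)$ for $v\in T_{m-j}(y)$, and that $G_j(v)$ acts regularly, so $|G_j(v)|=N^j$ with $N:=d^k\geq4$.

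The gap is the final contradiction when $j\geq2$, so as written this case is not proved.
\begin{itemize}
\item The comparison $(d^k)!>d^{kj}$ only holds for small $j$; for $N=4$, $j=3$ one has $24<64$.
\item The claim that a regular group ``cannot surject onto $S_{d^k}$ while acting on a tree of that size'' is asserted without proof.
\item Your ``cleaner route'' contains a non sequitur. The identity $f^j\delta=f^j$ does not make $f^{j-1}(x)\mapsto f^{j-1}(\delta x)$ well defined on $T_1(v)$. That requires $\delta$ to carry $f^{j-1}$-fibers to $f^{j-1}$-fibers, and the induction on $j$ you propose for this is never carried out.
\end{itemize}

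Either route closes quickly. First route: $\theta_{j,1}$ is surjective, so by Lagrange's theorem $|G_1(v)|=N!$ divides $|G_j(v)|=N^j$. This is impossible, because $N-1\geq3$ divides $N!$ and is prime to $N$.

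Second route: $\delta$ centralizes $G_j(v)$, so the relation $f^{j-1}(\delta x)=f^{j-1}(\delta x')$ on $T_j(v)$ is $G_j(v)$-invariant, with classes of size $N^{j-1}$. Proposition~\ref{prop:sc-monodromy-partitions} then identifies it with the $f^{j-1}$-fiber relation. Hence $\delta$ induces a permutation of $T_1(v)$ commuting with $G_1(v)=S_N$, and this permutation is trivial since $N\geq3$. So every $\delta$-orbit lies in an $f^{j-1}$-fiber, contradicting that it is all of $T_j(v)$.

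With either repair your proof is complete, but the paper's argument is shorter and uses the same input, Lemma~\ref{lem:sc-pair-orbits}, directly. Since $\delta$ commutes with the transitive group $G_m(y)$, the orbit of the pair $\{x,\delta x\}$ is $\{\{a,\delta a\}\mid a\in T_m(y)\}$, a graph of maximal degree $2$. Lemma~\ref{lem:sc-pair-orbits}, however, says this orbit is some $\Omega_j$, in which every leaf has degree $N^{j-1}(N-1)\geq3$. Your version passes through Proposition~\ref{prop:sc-monodromy-partitions}, which is itself derived from that lemma, and then needs an extra group-theoretic step. It brings in no new input and gains nothing for this statement.
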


\begin{proof}
Fix a general $y\in U_m$. If $\delta$ is not the identity, choose $x\in T_m(y)$ with
$\delta x\ne x$ and put $e:=\{x,\delta x\}$.
Since the deck transformation $\delta$ commutes with $G_m(y)$,
transitivity implies that $\Gamma_m(e;y)$ has edge set
$\bigl\{\{a,\delta a\}\mid a\in T_m(y)\bigr\}$.
Every vertex $a$ therefore has at most two neighbors, namely
$\delta a$ and $\delta^{-1}a$.
On the other hand, Lemma~\ref{lem:sc-pair-orbits} shows that this
edge set is $\Omega_j$ for some $1\leq j\leq m$.
Hence every vertex has degree $N^{j-1}(N-1)\geq3$, where
$N:=d^k\geq4$.
This contradiction proves the claim.
\end{proof}

\subsection{Proof of Theorem~\ref{thm:main-semiconjugacy} when \texorpdfstring{$h$}{h} is finite}
\label{subsec:sc-regular}

Suppose that $h$ is finite. Since $Y$ is normal and $hu=f^mh$,
the rational map $u$ extends to a finite surjective morphism.

Put $F:=f^m$. Consider the fiber product
\[
 W:=\Pbb^k\times_{\Pbb^k}Y
   =\{(x,y)\in\Pbb^k\times Y\mid F(x)=h(y)\}.
\]
The projection $W\to Y$ is finite flat of degree $d^{km}$,
with reduced generic fiber. Hence $W$ is reduced and every
irreducible component dominates $Y$.
Let $\Gamma$ be the reduced image of $(h,u):Y\to W$,
and let $Y_1$ be its normalization.
The normalization map is finite, so $Y_1$ is a normal integral
projective variety and the projections induce finite surjective
morphisms $h_1:Y_1\to\Pbb^k$ and $v:Y_1\to Y$.
Since $Y$ is normal, $(h,u)$ lifts to a surjective finite morphism
$p:Y\to Y_1$ satisfying
\[
 h=h_1p,
 \qquad u=vp.
\]
Thus the following diagram commutes:
\[
\begin{tikzcd}[column sep=18mm, row sep=11mm]
 Y \arrow[r,"p"]
   \arrow[rrrr,bend left=25,"h"]
   \arrow[drrr,bend right=24,"u"']
 & Y_1 \arrow[r,"\text{normalization}"']
   \arrow[rrr,bend left=15,"h_1"]
   \arrow[drr,bend right=12,"v"']
 &[8mm] \Gamma \arrow[r,hook]
 & W \arrow[r,"\operatorname{pr}_1"']
   \arrow[d,"\operatorname{pr}_2"]
 & \Pbb^k \arrow[d,"F"] \\
 & & & Y \arrow[r,"h"'] & \Pbb^k
\end{tikzcd}
\]

In dimension one, with $Y=\Pbb^1$, the birationality condition in
the following lemma is equivalent to Pakovich's \emph{primitive}
condition; see~\cite[Section~6]{PakovichSemiconjugate2016}.

\begin{lem}
\label{lem:sc-ramification}
With the notation above,
if $(h,u):Y\to\Pbb^k\times Y$ is birational onto its image,
then $h$ is an isomorphism.
\end{lem}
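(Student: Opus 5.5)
The plan is to show that the hypothesis forces $Y\cong Y_1$ and $\Gamma=W$, so that $h$ is, up to isomorphism, the base change of itself along $F$. A comparison of branch loci will then show that $B_h$ is totally invariant under $F$. Corollary~\ref{cor:no-totally-invariant-hypersurface} rules this out unless $B_h=\varnothing$, and simple connectivity of $\Pbb^k$ finishes.

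\emph{Step 1: $p$ is an isomorphism and $\Gamma=W$.} If $(h,u)$ is birational onto $\Gamma$, then $p:Y\to Y_1$ is finite and birational between normal varieties, hence an isomorphism by Zariski's Main Theorem; so we may identify $h=h_1$ and $u=v$. Comparing topological degrees in $hu=Fh$ gives $\deg u=\deg F=d^{km}$. Since $\Gamma\to Y$ is $u$ composed with a birational map, $\operatorname{pr}_2:\Gamma\to Y$ has degree $d^{km}$. This is also the degree of the finite flat map $W\to Y$. Because $W$ is reduced and all its components dominate $Y$, this forces $W=\Gamma$. Hence $W$ is irreducible and $Y\to W$ is its normalization, with $h$ corresponding to $\operatorname{pr}_1$.

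\emph{Step 2: compare branch loci.} Let $x$ be a general point of a component of $F^{-1}(B_h)$ not contained in $R_F$, and let $y\in Y$ be a point with $h(y)=F(x)$ at which $h$ is ramified. Since $F$ is étale at $x$, near $(x,y)$ the scheme $W$ is the graph $\{x'=F_{\mathrm{loc}}^{-1}(h(y'))\}$. This graph is normal because $Y$ is, so $W$ agrees with its normalization there. On it, $\operatorname{pr}_1=F_{\mathrm{loc}}^{-1}\circ h$ is ramified at $(x,y)$, so $x\in B_{h_1}=B_h$. The same local picture shows that over a point $x\notin R_F$ with $F(x)\notin B_h$, the map $h$ is étale. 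Therefore, away from the finitely many components of $R_F$ and $F^{-1}(B_F)$, we have $F^{-1}(B_h)=B_h$.

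\emph{Step 3 (main obstacle): components over $R_F$ and $B_F$.} One must exclude that some component of $B_h$ is mapped by $F$ into $B_F=D_m$, or that a component of $F^{-1}(B_h)$ lies in $R_F$. I would first compare degrees. Pulling back gives $F^*B_h\supseteq$ the part of $B_h$ found in Step 2, while $B_h$ is a finite union of components. Iterating Step 2 and using that $F$ maps each component of $B_h$ onto a component of $B_h$ up to finitely many exceptions, we see that each component of $B_h$ is $F$-periodic. If such a periodic component lay in $R_F$ or in $F^{-1}(B_F)$, then some $R_j$ or $B_i$ would be periodic, which is impossible by Lemma~\ref{lem:ram-level-separation} together with (S4). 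This yields a union $D\subset B_h$ of periodic components with $F^{-n}(D)=D$ for a suitable iterate. If $B_h\neq\varnothing$, this contradicts Corollary~\ref{cor:no-totally-invariant-hypersurface} applied to $f^{mn}$.

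\emph{Step 4: conclusion.} Hence $B_h=\varnothing$. By Zariski--Nagata purity, $h$ is finite étale over $\Pbb^k$. Since $\Pbb^k$ is simply connected and $Y$ is connected, $h$ is an isomorphism.
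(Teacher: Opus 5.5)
Your Steps 1, 2 and 4 match the paper's proof: the degree count forcing $\Gamma=W$, the comparison of branch loci via \'etale base change, and purity plus simple connectivity at the end. Step 3, which you flag as the main obstacle, has a genuine gap.

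The claim that every component of $B_h$ is $F$-periodic does not follow from Step 2. Knowing that $F$ sends components of $B_h$ to components of $B_h$, even without exceptions, only makes them preperiodic. It is exactly the non-periodic components, and those lying over $D_m=F(R_F)$, that must be ruled out. Periodicity of a union $D$ would not give $F^{-n}(D)=D$ either. Total invariance needs $F^{-1}(B_h)\subseteq B_h$. Your Step 2 gives this only for the components of $F^{-1}(B_h)$ not contained in $R_F$, and such exceptional components exist precisely when some $B_i$ with $0\leq i<m$ is a component of $B_h$. The announced degree comparison is never carried out, so as written nothing excludes $B_i\subseteq B_h$ for some $i<m$.

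The missing idea is a forward-orbit argument, which is how the paper proceeds.

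\begin{itemize}
\item First obtain $F(B_h)\subseteq B_h$ with no exceptions. If $F(x)\notin B_h$, then $\operatorname{pr}_1$ is \'etale over $x$, being the base change of $h$. Hence $W$ is smooth, so normal, over $x$, and $h$ is \'etale over $x$, whether or not $F$ is \'etale at $x$. The paper does this with $V=\Pbb^k\setminus F^{-1}(B_h)$. Your restricted version would also suffice, since no $B_i$ lies in $R_F$.
\item Now suppose some $B_i$ with $0\leq i<m$ were a component of $B_h$. Then all $B_{i+nm}$, $n\geq0$, would be components of $B_h$. They are pairwise distinct by Lemma~\ref{lem:ram-level-separation}, which is absurd.
\item Hence $B_h$ has no component in $D_m$. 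So no component $E$ of $F^{-1}(B_h)$ lies in $R_F$; otherwise $E\subseteq R_j$ and $F(E)=B_{m-j-1}$. Your local argument then yields $F^{-1}(B_h)\subseteq B_h$.
\item Together with the forward inclusion this gives $F^{-1}(B_h)=B_h$. Corollary~\ref{cor:no-totally-invariant-hypersurface} then applies directly to $B_h$ with the same $m$.
\end{itemize}

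With Step 3 replaced by this argument, your proof is the paper's.
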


\begin{proof}
By hypothesis, $p$ is birational and finite, hence an isomorphism.
The projection $\Gamma\to Y$ has topological degree
$\deg_{\mathrm{top}}v=\deg_{\mathrm{top}}u
=\deg_{\mathrm{top}}F=d^{km}$, by $Fh=hu$.
Thus $\Gamma$ is the only irreducible component of $W$,
and $(h,u)$ identifies $Y$ with the normalization of $W$.

We now consider the Cartesian square on the right of the diagram above.
Set $V:=\Pbb^k\setminus F^{-1}(B_h)$.
The base change $\operatorname{pr}_1:\operatorname{pr}_1^{-1}(V)\to V$
of $h$ is \'etale, so $\operatorname{pr}_1^{-1}(V)$ is normal.
Thus, writing $\nu:=(h,u)$ for the normalization,
\[
 h:\;h^{-1}(V)
 \xrightarrow[\sim]{\nu}
 \operatorname{pr}_1^{-1}(V)
 \xrightarrow[\text{\'etale}]{\operatorname{pr}_1}V
\]
is \'etale.
Thus $B_h\subseteq F^{-1}(B_h)$ and $F(B_h)\subseteq B_h$.
In particular, by Lemma~\ref{lem:ram-level-separation},
$B_h$ and $D_m$ have no common irreducible component.

Since $F$ is \'etale over $U_m$, its base change by $h$ is \'etale
over the normal variety $h^{-1}(U_m)$.
Hence $W$ is normal over $F^{-1}(U_m)$.
Thus normalization is an isomorphism there,
and \'etale base change gives
\[
 B_h\cap F^{-1}(U_m)=F^{-1}(B_h\cap U_m).
\]
Taking closures gives
$F^{-1}(B_h)\subseteq B_h$. Together with the opposite inclusion,
this yields $F^{-1}(B_h)=B_h$.
Corollary~\ref{cor:no-totally-invariant-hypersurface} now gives
$B_h=\varnothing$. Hence $h$ is finite \'etale and therefore
an isomorphism, since $\Pbb^k$ is simply connected.
\end{proof}

\begin{proof}[Proof of Theorem~\ref{thm:main-semiconjugacy} when $h$ is finite]
We prove existence by induction on $\deg_{\mathrm{top}}h$,
uniformly over all $m$ and all relations $f^m h=hu$.
If $\deg_{\mathrm{top}}h=1$, then $h$ is an isomorphism and the
result follows with $s=0$ and $\mu=h$.

Assume $\deg_{\mathrm{top}}h>1$. Then $\deg_{\mathrm{top}}p>1$
by Lemma~\ref{lem:sc-ramification}. Set $u_1:=pv$.
Since $Fh_1p=h_1pvp$ and $h=h_1p$, we have
\[
 Fh_1=h_1u_1,
 \qquad \deg_{\mathrm{top}}h_1<\deg_{\mathrm{top}}h.
\]
By induction, there are an integer $s_1\geq0$ and an
isomorphism $\mu_1:Y_1\to\Pbb^k$ with
\[
 h_1=f^{s_1}\mu_1,
 \qquad u_1=\mu_1^{-1}F\mu_1.
\]
Consequently, $F=\mu_1u_1\mu_1^{-1}=(\mu_1p)(v\mu_1^{-1})$.
Theorem~\ref{thm:sc-normal-factors} gives an integer $0\leq j\leq m$ and an isomorphism
$\mu:Y\to\Pbb^k$ such that
\[
 v\mu_1^{-1}=\mu^{-1}f^j,
 \qquad \mu_1p=f^{m-j}\mu.
\]
Therefore
\[
 h=h_1p=f^{s_1+m-j}\mu,
 \qquad
 u=vp=\mu^{-1}f^j f^{m-j}\mu=\mu^{-1}f^m\mu.
\]
Setting $s:=s_1+m-j\geq0$ proves existence.

Finally, $\deg_{\mathrm{top}}h=d^{ks}$ determines $s$ uniquely, and
the identity $h=f^s\mu$ determines $\mu$ uniquely: this is immediate
when $s=0$ and follows from Theorem~\ref{thm:sc-deck-rigidity}
when $s\geq1$.
\end{proof}

\subsection{Proof of Theorem~\ref{thm:main-semiconjugacy} in general}
\label{sec:dominant-rational-maps}

\begin{proof}[Proof of Theorem~\ref{thm:main-semiconjugacy}]
Let $I_h$ denote the indeterminacy locus of $h$, and put
\[
 \Gamma_h:=\overline{\{(x,h(x))\mid x\in Y\setminus I_h\}}
 \subset Y\times\Pbb^k.
\]
Let $X$ be the normalization of $\Gamma_h$. The two projections
induce morphisms $\pi:X\to Y$ and $q:X\to\Pbb^k$, with
$\pi$ birational and $q=h\pi$. The Stein factorization of $q$ gives
$X\overset{r}{\to}Z\overset{p}{\to}\Pbb^k$,
where $Z$ is normal and integral, $r$ is birational, and $p$ is finite
surjective. Thus $h=p\nu$ for the birational map
$\nu:=r\pi^{-1}:Y\dashrightarrow Z$.
Define the dominant rational map
$\widetilde u:=\nu u\nu^{-1}:Z\dashrightarrow Z$.
By $f^m h=hu$ and $h=p\nu$, we have
\[
 f^m p=p\widetilde u,
 \qquad
 \widetilde u\nu=\nu u.
\]
Thus $\widetilde u$ is a finite surjective morphism.
The construction gives the following commutative diagram,
where dashed arrows denote rational maps:
\[
\begin{tikzcd}[column sep=22mm, row sep=18mm]
 Y \arrow[d,dashed,"u"']
   \arrow[rr,dashed,bend right=22,"\nu"']
 & X \arrow[l,"\pi"'] \arrow[r,"r"]
   \arrow[rr,bend left=26,"q"]
 & Z \arrow[r,"p"] \arrow[d,"\widetilde u"']
 & \Pbb^k \arrow[d,"f^m"] \\
 Y \arrow[rr,dashed,"\nu"']
   \arrow[rrr,dashed,bend right=26,"h"']
 &
 & Z \arrow[r,"p"']
 & \Pbb^k
\end{tikzcd}
\]
Apply the finite case proved in Section~\ref{subsec:sc-regular}
to $p$ and $\widetilde u$. There are an integer $s\geq0$ and an isomorphism
$\alpha:Z\to\Pbb^k$ such that
\[
 p=f^s\alpha,
 \qquad
 \widetilde u=\alpha^{-1}f^m\alpha.
\]
Set $\mu:=\alpha\nu$. Then $\mu$ is birational and
\begin{equation}\label{eq:rational-birational-normal-form}
 h=f^s\mu,
 \qquad
 \mu u=f^m\mu.
\end{equation}
Any birational map $\delta:\Pbb^k\dashrightarrow\Pbb^k$
satisfying $f^s\delta=f^s$ extends to an automorphism.\linebreak
Uniqueness then follows as in the finite case (Section~\ref{subsec:sc-regular}).

It remains to prove the additional assertions.
If $h$ is finite, then $X=Y$ and $r$ is finite birational,
hence an isomorphism. Thus $\nu$ and $\mu$ are isomorphisms.

Suppose now that $Y=\Pbb^k$ and that $u$ is a morphism.
Put $\psi:=\mu^{-1}$ and let $E:=\operatorname{Exc}(\psi)$
be the union of the hypersurfaces contracted by $\psi$.
Since $\psi f^m=u\psi$, we have $(f^m)^{-1}(E)=E$.
Corollary~\ref{cor:no-totally-invariant-hypersurface} now gives
$E=\varnothing$, so $\psi,\mu\in\PGL_{k+1}(\Cbb)$.
\end{proof}

\section{Iterated centralizers and proof of Theorems~\ref{thm:main} and~\ref{thm:regular-polynomial-centralizer}}
\label{sect:proof}

\subsection{Proof of Theorem~\ref{thm:main}}

The rational semiconjugacy theorem reduces the classification of
commuting rational maps to automorphisms of iterates.
For an endomorphism $F$ of $\Pbb^k$, define its
\emph{commuting automorphism group} by
$\Aut(F):=\{\alpha\in\Aut(\Pbb^k)\mid\alpha F=F\alpha\}$.

Recall that
\[
 W_d=H^0(\Pbb^k,\Ocal_{\Pbb^k}(d))
 \simeq\Sym^d(\Cbb^{k+1})^\vee.
\]
Write an endomorphism $f\in\End_d^k$ as
$f=[F_0:\cdots:F_k]$, and put
\[
 F(z):=(F_0(z),\ldots,F_k(z))^{\mathsf t},
 \qquad
 L_f:=\langle F_0,\ldots,F_k\rangle\subset W_d.
\]
Since $f$ is finite and surjective, the forms
$F_0,\ldots,F_k$ are linearly independent. For
$\alpha=[A]\in\PGL_{k+1}$ and $L\subset W_d$, set
$\alpha^*L:=\{P(Az)\mid P\in L\}$.
This subspace is independent of the chosen lift $A$. Define
\[
 H_f:=\{\alpha\in\PGL_{k+1}\mid\alpha^*L_f=L_f\}.
\]
If $\alpha=[A]\in H_f$, choose a lift $A\in\GL_{k+1}$. There is a
unique $\widehat A_f\in\GL_{k+1}$ such that
$F(Az)=\widehat A_fF(z)$.
Replacing $A$ by a scalar multiple changes $\widehat A_f$ by a scalar
multiple. Hence
\[
 \kappa_f:H_f\longrightarrow\PGL_{k+1},
 \qquad
 \kappa_f(\alpha):=[\widehat A_f]
\]
is well defined. The preceding identity also shows that $\kappa_f$ is
a homomorphism and that $f\alpha=\kappa_f(\alpha)f$.

We need the following proposition. The case $(d,k)\ne(2,2)$ is due to
Guralnick--Lawther~\cite[Theorem~4 and Table~1.4]{GuralnickLawther2024}.

\begin{prop}\label{prop:generic-rigidity}
\textup{\cite[Proposition~7.5]{ZhangMeasureRigidity}}
There exists a dense Zariski open subset $V^{\mathrm{aut}}_{d,k}
 \subset \End_d^k$ defined over $\Qbb$ such that, for every $f\in V^{\mathrm{aut}}_{d,k}(\Cbb)$,
\[
        \kappa_f^{-1}(H_f)=\{\id\}.
\]
\end{prop}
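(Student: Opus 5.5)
I will read $\kappa_f^{-1}(H_f)$ as the set of $\alpha\in H_f$ with $\kappa_f(\alpha)\in H_f$. The plan is to show that this set is trivial for $f$ outside a proper closed subset, using three ingredients: the target $\PGL_{k+1}$-action, finiteness of generic stabilizers of subspaces, and deck rigidity for simple maps.

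\emph{Step 1 (target action).} The basic observation is how the data transform when we compose with $\beta\in\PGL_{k+1}$ on the target. Since $L_{\beta f}=L_f$, we have $H_{\beta f}=H_f$. From $F(Az)=\widehat A_fF(z)$ we get
\[
 \kappa_{\beta f}(\alpha)=\beta\,\kappa_f(\alpha)\,\beta^{-1}.
\]
Hence $\alpha\in\kappa_{\beta f}^{-1}(H_f)$ if and only if $\beta\kappa_f(\alpha)\beta^{-1}\in H_f$. Note also that $f\alpha=\kappa_f(\alpha)f$ for all $\alpha\in H_f$.

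\emph{Step 2 (finite generic stabilizer).} Next I would show that $H_f$ is finite for $f$ in a dense open set. The group $H_f$ is the stabilizer of the point $L_f$ in the Grassmannian $\mathrm{Gr}(k+1,W_d)$ under $\PGL_{k+1}$. Stabilizers of a general point of this Grassmannian are finite: for $(d,k)\ne(2,2)$ this is Guralnick--Lawther, who even get trivial stabilizers, in which case we are done. For nets of conics, $\mathrm{Gr}(3,6)$ has dimension $9>8=\dim\PGL_3$. So I would either exhibit one net with finite stabilizer and use upper semicontinuity of stabilizer dimension, or quote the classical fact that a general net of conics has finite stabilizer. The condition "stabilizer finite" is open and $\Aut(\Cbb/\Qbb)$-invariant, hence defined over $\Qbb$.

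\emph{Step 3 (excluding $\kappa_f(\alpha)=\id$).} An element with $\kappa_f(\alpha)=\id$ satisfies $f\alpha=f$. By Theorem~\ref{thm:sc-deck-rigidity} with $m=1$, this forces $\alpha=\id$ whenever $f$ is simple. So on $U_{\mathrm{simp}}$, every $\id\ne\alpha\in H_f$ has $\kappa_f(\alpha)\ne\id$.

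\emph{Step 4 (dimension count).} Fix $f_0$ in the open set of Step 2 and in $U_{\mathrm{simp}}$, and consider the family $\beta f_0$, $\beta\in\PGL_{k+1}$. For each of the finitely many $\id\ne\alpha\in H_{f_0}$, the element $\gamma:=\kappa_{f_0}(\alpha)$ is nontrivial. Its conjugacy class is therefore positive-dimensional, and so not contained in the finite set $H_{f_0}$. Hence
\[
 \{\beta\mid\beta\gamma\beta^{-1}\in H_{f_0}\}
\]
is a proper closed subset of $\PGL_{k+1}$. Choosing $\beta$ outside the union of these finitely many proper closed sets gives some $f=\beta f_0$ with $\kappa_f^{-1}(H_f)=\{\id\}$.

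\emph{Step 5 (spreading out).} To pass from one good $f$ to a dense open set, I would form the incidence
\[
 \mathcal I=\{(f,\alpha)\mid \alpha\in H_f,\ \kappa_f(\alpha)\in H_f,\ \alpha\ne\id\}
\]
over the open set where $H_f$ is finite. Its projection to $\End_d^k$ is constructible, quasi-finite, and defined over $\Qbb$. It remains to show this image is not dense, i.e.\ that the bad locus is not all of $\End_d^k$. The family $\beta f_0$ alone does not establish this; I would apply the argument of Step 4 to a general $f_0$, arguing that the bad locus is a union of $\PGL_{k+1}$-translates of proper closed subsets, hence of dimension $<\dim\End_d^k$.

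The main obstacle I expect is precisely this last step, together with the $(2,2)$ finiteness input in Step 2. Concretely, one must check that, over the finite-stabilizer locus, the bad set has positive codimension in each $\PGL_{k+1}$-orbit of target changes, uniformly enough to give a global dimension bound. Taking the complement of the closure of the bad locus, intersected with $U_{\mathrm{simp}}$ and the open set of Step 2, yields the required $V^{\mathrm{aut}}_{d,k}$, defined over $\Qbb$ by Galois invariance.
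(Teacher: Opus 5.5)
Your Steps 1--4 are correct. From $(BF)(Az)=B\widehat A_fB^{-1}(BF)(z)$ you get $H_{\beta f}=H_f$ and $\kappa_{\beta f}(\alpha)=\beta\kappa_f(\alpha)\beta^{-1}$. Theorem~\ref{thm:sc-deck-rigidity} with $m=1$ excludes $\kappa_{f_0}(\alpha)=\id$ for $\alpha\neq\id$. Since nontrivial elements of $\PGL_{k+1}$ have infinite conjugacy classes, the good $\beta$ contain a nonempty open subset of $\PGL_{k+1}$.

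The paper does not prove the statement itself. It cites \cite[Proposition~7.5]{ZhangMeasureRigidity} and attributes the case $(d,k)\neq(2,2)$ to Guralnick--Lawther, where a general $f$ has $H_f=\{\id\}$, as in your Step 2. Your twisting by target automorphisms is a self-contained way to treat nets of conics, and it needs only generic finiteness of $H_f$, not triviality.

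The genuine gap is Step 5, which you leave open. Moreover, ``a union of $\PGL_{k+1}$-translates of proper closed subsets'' is not the right formulation. What Step 4 gives is orbitwise: let $\mathcal O$ be the dense open set where $H_f$ is finite and $f$ is simple, and put $\mathcal B:=\{f\mid\kappa_f^{-1}(H_f)\neq\{\id\}\}$. Then for every $f_0\in\mathcal O$, one has $\beta f_0\notin\mathcal B$ for $\beta$ in a nonempty open subset of $\PGL_{k+1}$. This uses only data at $f_0$. So it does not matter that $U_{\mathrm{simp}}$ need not be stable under $f\mapsto\beta f$, and no uniformity is required.

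Here is how to finish. The incidence $\{(f,\alpha)\mid\alpha\in H_f,\ \kappa_f(\alpha)\in H_f,\ \alpha\neq\id\}$ is locally closed in $\End_d^k\times\PGL_{k+1}$ and defined over $\Qbb$. Indeed, $\alpha\in H_f$ is a rank condition, and $\kappa_f(\alpha)$ solves a linear system, so it is a morphism on that closed set. Hence $\mathcal B$, the projection of this incidence, is constructible by Chevalley's theorem. If $\mathcal B$ were dense, it would contain a dense open set $\Omega$. Take $f_0\in\Omega\cap\mathcal O$. The open neighbourhood $\{\beta\mid\beta f_0\in\Omega\}$ of $\id$ meets the nonempty open set of good $\beta$, by irreducibility of $\PGL_{k+1}$, which is a contradiction. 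So $\overline{\mathcal B}$ is a proper closed subset defined over $\Qbb$, and $V^{\mathrm{aut}}_{d,k}:=\End_d^k\setminus\overline{\mathcal B}$ works.

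The $(2,2)$ finiteness input you flagged is also easy to settle. The map $f=[x^2+4yz:y^2+4zx:z^2+4xy]$ is an endomorphism: a vanishing coordinate forces the others to vanish, and otherwise a common zero would give $x^2y^2z^2=-64x^2y^2z^2$. A direct computation shows that the Lie algebra stabilizer of $L_f$ in $\mathfrak{gl}_3$ consists of the scalars. Hence $H_f$ is finite there, and upper semicontinuity of $\dim H_f$ gives finiteness on a dense open set. Note that this $f$ commutes with all coordinate permutations, so it lies in $\mathcal B$. This illustrates why the twist in Step 4 is genuinely needed in this case.
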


\begin{defi}\label{defi:strongly-simple}
A simple endomorphism $f\in\End_d^k$ satisfying
$\kappa_f^{-1}(H_f)=\{\id\}$ is called \emph{strongly simple}.
\end{defi}

Set $U_{d,k}:=
U_{\mathrm{simp}}\cap V^{\mathrm{aut}}_{d,k}$.

\begin{thm}\label{thm:aut-iterate-trivial}
The subset $U_{d,k}\subset
\End_d^k$ is dense Zariski open and defined over $\Qbb$. For every
$f\in U_{d,k}(\Cbb)$ and every $m\geq1$,
\[
        \Aut(f^m)=\{\id\}.
\]
\end{thm}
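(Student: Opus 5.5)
Since $U_{\mathrm{simp}}$ (Proposition~\ref{prop:simple-open}) and $V^{\mathrm{aut}}_{d,k}$ (Proposition~\ref{prop:generic-rigidity}) are both dense Zariski open and defined over $\Qbb$, so is their intersection $U_{d,k}$; this settles the first assertion. For the second, I fix $f\in U_{d,k}(\Cbb)$, $m\geq1$ and $\alpha\in\Aut(f^m)$. The plan is to show that $\alpha$ lies in $H_f$ and that $\kappa_f(\alpha)$ again lies in $H_f$. Reading the hypothesis $\kappa_f^{-1}(H_f)=\{\id\}$ as the statement that the only $\alpha\in H_f$ with $\kappa_f(\alpha)\in H_f$ is the identity, this will force $\alpha=\id$.

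\emph{Step 1: pushing an automorphism through one factor of $f$.} Let $\beta\in\Aut(\Pbb^k)$ satisfy $f^m\beta=\beta f^m$. Then
\[
 f^m=\beta f^m\beta^{-1}=(\beta f^{m-1})(f\beta^{-1}).
\]
This is a factorization of $f^m$ into finite morphisms through $Z=\Pbb^k$. By Theorem~\ref{thm:sc-normal-factors} there are $0\leq j\leq m$ and an isomorphism $\gamma$ with $f\beta^{-1}=\gamma^{-1}f^j$. Comparing topological degrees ($d^k$ against $d^{kj}$) gives $j=1$. Hence $\gamma f=f\beta$, and $\gamma$ is an automorphism. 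Lifting to homogeneous coordinates, this means $F(Bz)=\lambda\, C F(z)$ for lifts $B$ of $\beta$ and $C$ of $\gamma$. Therefore $\beta^*L_f=L_f$, so $\beta\in H_f$ and $\kappa_f(\beta)=\gamma$.

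\emph{Step 2: the new automorphism still commutes with $f^m$.} From $\gamma f=f\beta$ I get
\[
 \gamma f^m\, f=\gamma f\,f^m\text{-type manipulation: }\gamma f^{m+1}=f\beta f^m=f f^m\beta=f^m\,\gamma f,
\]
so $(\gamma f^m)f=(f^m\gamma)f$. Since $f$ is surjective, $\gamma f^m=f^m\gamma$, i.e.\ $\gamma\in\Aut(f^m)$.

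\emph{Step 3: conclusion.} Apply Step 1 to $\beta=\alpha$: this gives $\alpha\in H_f$, with $\alpha_1:=\kappa_f(\alpha)\in\Aut(f^m)$ by Step 2. Apply Step 1 again to $\beta=\alpha_1$: this gives $\alpha_1\in H_f$. Thus $\alpha\in H_f$ and $\kappa_f(\alpha)\in H_f$, so $\alpha\in\kappa_f^{-1}(H_f)=\{\id\}$.

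The point needing most care is matching this conclusion with the exact meaning of the hypothesis $\kappa_f^{-1}(H_f)=\{\id\}$. If a stronger form is needed, the iteration supplies it: repeating Steps 1 and 2 produces a whole orbit $\alpha_i:=\kappa_f^i(\alpha)\in H_f\cap\Aut(f^m)$ with $f^i\alpha=\alpha_i f^i$. Taking $i=m$ gives $\alpha_m f^m=f^m\alpha=\alpha f^m$, hence $\alpha_m=\alpha$ by surjectivity of $f^m$. So $\alpha$ is $\kappa_f$-periodic and its entire $\kappa_f$-orbit stays inside $H_f$, which covers any reasonable reading of the hypothesis.
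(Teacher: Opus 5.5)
Your proof is correct and follows the paper's argument: Theorem~\ref{thm:sc-normal-factors} plus a degree count puts $\alpha$ in $H_f$, a second application puts $\kappa_f(\alpha)$ in $H_f$, and this is exactly the paper's reading of $\kappa_f^{-1}(H_f)=\{\id\}$. The only difference is in how the second step is organized: the paper cancels $f$ from $\alpha f^m=f^{m-1}\gamma f$ to get $\alpha f^{m-1}=f^{m-1}\gamma$ and applies a two-sided version of your Step~1 ($Af^\ell=f^\ell B\Rightarrow B\in H_f$), treating $m=1$ separately, whereas you show $\kappa_f(\alpha)\in\Aut(f^m)$ and reuse the same step, which avoids the case split.
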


\begin{proof}
We first show that $Af^\ell=f^\ell B$, with
$A,B\in\Aut(\Pbb^k)$ and $\ell\geq1$, implies $B\in H_f$.
Indeed, $f^\ell=(A^{-1}f^{\ell-1})(fB)$.
Theorem~\ref{thm:sc-normal-factors} gives $fB=\varepsilon f^j$
for some automorphism $\varepsilon$ and some $0\leq j\leq\ell$.
Comparing degrees gives $j=1$, so $fB=\varepsilon f$,
and hence $B\in H_f$.

Now let $\alpha\in\Aut(f^m)$. Applying the preceding observation with
$A=B=\alpha$ and $\ell=m$ gives $\alpha\in H_f$. Put
$\gamma:=\kappa_f(\alpha)$, so that $f\alpha=\gamma f$.
If $m=1$, then $\alpha f=f\alpha=\gamma f$; since $f$ is
surjective, $\gamma=\alpha\in H_f$. If $m\geq2$, then
$\alpha f^m=f^m\alpha=f^{m-1}(f\alpha)=f^{m-1}\gamma f$.
Cancelling the final surjective factor $f$ gives
$\alpha f^{m-1}=f^{m-1}\gamma$. The same observation gives
$\gamma\in H_f$. Thus in either case $\alpha\in H_f$ and
$\kappa_f(\alpha)=\gamma\in H_f$. The rigidity condition gives
$\alpha=\id$.
\end{proof}

\begin{proof}[Proof of Theorem~\ref{thm:main}]
Fix $f\in U_{d,k}(\Cbb)$. The inclusion $\langle f\rangle\subseteq C_{\mathrm{rat}}(f^\infty)$
is immediate. Conversely, suppose that $h f^m=f^m h$ for some
$m\geq1$.
Apply Theorem~\ref{thm:main-semiconjugacy} with
$u=f^m$. Then
\[
 h=f^s\alpha,
 \qquad
 \alpha f^m=f^m\alpha
\]
for some $s\geq0$ and $\alpha\in\PGL_{k+1}(\Cbb)$. By
Theorem~\ref{thm:aut-iterate-trivial},
$\alpha\in\Aut(f^m)=\{\id\}$.
Therefore $h=f^s\in\langle f\rangle$.
\end{proof}

\subsection{Proof of Theorem~\ref{thm:regular-polynomial-centralizer}}
\label{sec:regular-polynomial}

The affine centralizer is controlled by the \emph{image--fiber
principle} from the author's earlier work on the rigidity of equilibrium
measures of regular polynomial endomorphisms~\cite{ZhangMeasureRigidity}.
For $e\geq2$ and
$f\in\Poly_e^k$, its
\emph{Green function} is
\[
 G_f(z):=\lim_{n\to\infty}e^{-n}\log^+\|f^n(z)\|.
\]
It is the unique continuous nonnegative plurisubharmonic function
$G$ on $\Cbb^k$ satisfying the two properties
\[
 G(z)-\log^+\|z\|=O(1)\quad\text{as }\|z\|\to\infty,
 \qquad G\circ f=eG.
\]
The \emph{equilibrium measure} and its Julia set are
\[
 \mu_f:=(dd^cG_f)^k,
 \qquad J_f:=\supp(\mu_f).
\]

Let
$f\in\Poly_d^k$, and put $\Pi:=\Pbb^k\setminus\Cbb^k\simeq\Pbb^{k-1}$.
Then $f^{-1}(\Pi)=\Pi$, and the restriction of $f$ to $\Pi$ is an
endomorphism $f_\Pi\in\End_d(\Pi)$. Set
\[
 C_{\mathrm{reg}}(f^N)
 :=\{g\in\bigsqcup_{e\geq1}\Poly_e^k
      \mid gf^N=f^Ng\},
 \qquad
 C_{\mathrm{reg}}(f^\infty)
 :=\bigcup_{N\geq1}C_{\mathrm{reg}}(f^N).
\]

\begin{thm}[Measure rigidity
 {\cite[Theorems~1.2 and~1.4]{ZhangMeasureRigidity}}]
\label{thm:measure-rigidity-input}
Let $f,g\in\Poly_d^k$ satisfy $f_\Pi=g_\Pi$.
Then
\[
 \mu_f=\mu_g\iff g=\sigma f
\]
for an affine automorphism $\sigma$ of $\Cbb^k$ that preserves
$J_f$ and induces the identity on $\Pi$.

Moreover, there exists a dense Zariski open subset
$U^{\mathrm{MR}}_{d,k}\subset\Poly_d^k$, defined over $\Qbb$, such
that, for every $f\in U^{\mathrm{MR}}_{d,k}(\Cbb)$,
\[
 \{g\in\Poly_d^k\mid \mu_g=\mu_f\}=\{f\}.
\]
If $k=2$, then moreover
\[
 \left\{g\in\bigsqcup_{e\geq2}\Poly_e^2\mid \mu_g=\mu_f\right\}
 =\{f^n\mid n\geq1\}.
\]
\end{thm}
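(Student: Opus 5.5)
The plan follows the two-part structure of the statement: an equivalence for maps with the same action on $\Pi$, and then a generic uniqueness statement. For the equivalence, the guiding principle is that $\mu_f$ determines the Green function once the behaviour at infinity is fixed. Since $f_\Pi=g_\Pi$, the leading homogeneous parts of $f$ and $g$ agree up to a scalar, which we normalise away. Hence $G_f$ and $G_g$ have the same homogeneous limit $\lim_{t\to\infty}\bigl(G(tz)-\log|t|\bigr)$ on $\Cbb^k\setminus\{0\}$.

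For the direction $\mu_f=\mu_g\Rightarrow g=\sigma f$, we proceed in four steps.
\begin{enumerate}
\item Identify $K_f=\{G_f=0\}$ with $K_g$. The first thing to check is that $\supp\mu_f$ is the Shilov boundary of the filled Julia set, so that $K_f$ is recovered as the polynomial hull of $J_f$.
\item Deduce $G_f=G_g$, since both equal the pluricomplex Siciak--Zaharjuta extremal function of the common compact set $K_f=K_g$.
\item Obtain the image--fiber principle from $G_f\circ g=dG_f=G_f\circ f$ together with equivariance of $\mu_f$ (that is, $g^*\mu_f=d^k\mu_f$ and $g_*\mu_f=\mu_f$). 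The goal is that $g$ maps each fiber of $f$ into a single fiber of $f$. I would first try a local argument near a repelling periodic point in $J_f$, using analytic continuation of the inverse branches of $f$ and $g$ along the support of $\mu_f$, to show that the relation ``$f(x)=f(x')\Rightarrow g(x)=g(x')$'' holds on an open set and hence identically.
\item Since $f$ and $g$ have the same degree and $f$ has normal target $\Cbb^k$, $g$ then factors as $g=\sigma f$ with $\sigma$ birational and finite, hence an automorphism. Comparing leading homogeneous parts forces $\sigma$ to be affine with trivial action on $\Pi$. It preserves $J_f=J_g$ because $\sigma$ maps $f(J_f)=J_f$ onto $g(J_f)=J_g$.
\end{enumerate}
For the converse, given $g=\sigma f$ with $\sigma$ as stated, define $\widetilde G:=G_f\circ\sigma^{-1}$. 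One then checks the two characterising properties of $G_g$: $\widetilde G-\log^+\|z\|=O(1)$ because $\sigma$ is affine, and the functional equation follows from $\sigma(J_f)=J_f$ together with uniqueness. This should give $G_g=G_f$ and hence $\mu_g=\mu_f$.

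For the generic statement, the equivalence reduces $\{g\in\Poly_d^k\mid\mu_g=\mu_f\}$ to the group of affine $\sigma$ with $\sigma|_\Pi=\id$ preserving $J_f$. Such $\sigma$ lie in a finite-order subgroup, since $J_f$ is compact and not contained in a hyperplane. One then shows that for general $f$ no nontrivial such $\sigma$ preserves $J_f$. Following the pattern of Proposition~\ref{prop:generic-rigidity}, this is a symmetry condition on the coefficients of $f$ that fails on a proper closed set defined over $\Qbb$. For $k=2$ and arbitrary degree $e$, one adds the iterated-centralizer results for regular maps of $\Cbb^2$: a map with the same measure has a common iterate, so it is forced into $\{f^n\}$.

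The main obstacle is the image--fiber step. Passing from equality of Green functions to the statement that $g$ respects the fibers of $f$ requires a genuinely dynamical argument and does not follow formally from pluripotential theory.
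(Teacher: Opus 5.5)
The paper does not reprove this statement. It quotes it from \cite{ZhangMeasureRigidity}, where the decisive ingredient is what the paper calls the image--fiber principle. Your Steps 1, 2 and 4 (granting Step 3) and the converse are essentially fine. For the converse, $\sigma(K_f)=\widehat{\sigma(J_f)}=K_f$ gives $G_f\circ\sigma=G_f$, so $G_f$ satisfies the characterization of $G_g$.

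The genuine gap is Step 3. It is the entire content of the equivalence, and you offer no mechanism for it. The identity $G_f\circ g=G_f\circ f$ only says that $g(x)$ and $f(x)$ lie on the same level set of $G_f$. Neither it nor the invariance of $\mu_f$ under $f$ and $g$ relates the fibers of $f$ to those of $g$. Near a repelling point of $J_f$, $G_f$ is merely continuous and $\supp\mu_f$ has empty interior, so there is nothing for analytic continuation to propagate.

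Even if you obtained $f(x)=f(x')\Rightarrow g(x)=g(x')$ near one pair, the step ``on an open set and hence identically'' reaches only one irreducible component of $\{f(x)=f(x')\}\setminus\Delta$. This set is reducible for special $f$: for $f=(z_1^2,z_2^2)$ its components are the graphs of the three deck transformations. The first part carries no genericity assumption, so this matters.

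Tellingly, the hypothesis $f_\Pi=g_\Pi$ does no work in your outline, and the one place it appears is not legitimate. If the leading parts satisfy $g_d=\lambda f_d$, the homogeneous Green functions differ by $\log|\lambda|/(d-1)$. So $|\lambda|=1$ is a consequence of $G_f=G_g$, not a normalization, and $\lambda$ is exactly the linear part of $\sigma$. Presumably the missing argument must exploit the behaviour near $\Pi$, which is what this hypothesis controls.

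The generic part has further gaps.
\begin{enumerate}
\item The equivalence applies only to $g$ with $g_\Pi=f_\Pi$, whereas the statement concerns every $g\in\Poly_d^k$ with $\mu_g=\mu_f$. From $G_g=G_f$ you only get equal homogeneous limits, i.e.\ $f_\Pi$ and $g_\Pi$ have the same Green current on $\Pi\simeq\Pbb^{k-1}$. Deducing $g_\Pi=f_\Pi$ for general $f$ requires a rigidity input on $\Pi$ that you do not supply.
\item Your finiteness claim is false in general: for homogeneous $f$, every $z\mapsto e^{i\theta}z$ preserves $J_f$ and induces the identity on $\Pi$.
\item The condition $\sigma(J_f)=J_f$ is not a condition on the coefficients of $f$. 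You must first make it algebraic before running a genericity argument. For example, apply the first part to $\sigma f\sigma^{-1}$, which has the same Green function and the same restriction to $\Pi$, to get $f\sigma^{-1}=\tau f$ with $\tau$ affine.
\item For $k=2$ and $e\neq d$, equality of measures does not make $g$ commute with an iterate of $f$, so centralizer results say nothing here. In this paper they would also be circular, since Proposition~\ref{prop:regular-affine-centralizer} is deduced from the statement you are proving.
\end{enumerate}
For the last point, what is needed is an argument on $\Pi\simeq\Pbb^1$ forcing $g_\Pi=f_\Pi^n$ for general $f$. This is where one-dimensional results on rational maps sharing a measure of maximal entropy can enter, and plausibly why the all-degree claim is restricted to $k=2$. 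Then the first part in degree $d^n$ gives $g=\sigma f^n$, and the degree-$d$ statement applied to $\sigma f$ gives $\sigma=\mathrm{id}$.
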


\begin{prop}\label{prop:regular-affine-centralizer}
There exists a dense Zariski open subset $U^{\mathrm{aff}}_{d,k}
        \subset\Poly_d^k$ defined over $\Qbb$
such that for every $f\in U^{\mathrm{aff}}_{d,k}(\Cbb)$,
\[
        C_{\mathrm{reg}}(f^\infty)=\langle f\rangle.
\]
\end{prop}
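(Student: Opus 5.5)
The plan is to reduce everything to the generic measure rigidity of Theorem~\ref{thm:measure-rigidity-input}, using the action on the hyperplane at infinity $\Pi$ to pin down the degree and the $\Pi$-part of a commuting map.

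\textbf{The open set.} The restriction map $\Poly_d^k\to\End_d(\Pi)\simeq\End_d^{k-1}$, $f\mapsto f_\Pi$, sends $f$ to the class of its leading homogeneous part. It is a surjective morphism defined over $\Qbb$, so preimages of dense Zariski open subsets defined over $\Qbb$ are again such.
\begin{itemize}
\item For $k\geq3$, set $U^{\mathrm{aff}}_{d,k}:=U^{\mathrm{MR}}_{d,k}\cap\{f\mid f_\Pi\in U_{d,k-1}\}$, where $U_{d,k-1}$ is supplied by Theorem~\ref{thm:main} in dimension $k-1\geq2$.
\item For $k=2$, take $U^{\mathrm{aff}}_{d,2}:=U^{\mathrm{MR}}_{d,2}$.
\end{itemize}

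\textbf{Step 1: commuting maps share the Green function.} Let $g\in\Poly_e^k$ satisfy $gf^N=f^Ng$. Since $f^N$ is regular, $\log^+\|f^N(w)\|=d^N\log^+\|w\|+O(1)$.

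Suppose first $e\geq2$. Then
\[
G_g\circ f^N=\lim_n e^{-n}\log^+\|f^N g^n\|=d^NG_g .
\]
Since also $G_g-\log^+\|z\|=O(1)$, the uniqueness characterization of $G_{f^N}=G_f$ gives $G_g=G_f$, and hence $\mu_g=\mu_f$.

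If $e=1$, the same computation with $G_f\circ g$ in place of $G_g$ gives
\[
(G_f\circ g)\circ f^N=d^N(G_f\circ g),
\]
and again $G_f\circ g=G_f$.

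\textbf{Step 2: degree and behaviour at infinity.} Restricting to $\Pi$, the map $g_\Pi$ commutes with $f_\Pi^N$.
\begin{itemize}
\item For $k\geq3$, Theorem~\ref{thm:main} applied to $f_\Pi\in U_{d,k-1}$ gives $g_\Pi=f_\Pi^s$ for some $s\geq0$, so $e=d^s$.
\item For $k=2$ and $e\geq2$, the last assertion of Theorem~\ref{thm:measure-rigidity-input} applied to $\mu_g=\mu_f$ directly gives $g=f^n$, and we are done. For $k=2$ and $e=1$, go straight to Step 3.
\end{itemize}

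\textbf{Step 3: the case $s\geq1$.} Here $g$ and $f^s$ lie in $\Poly_{d^s}^k$, have the same $\Pi$-part, and have the same equilibrium measure. The first part of Theorem~\ref{thm:measure-rigidity-input}, applied to the pair $(f^s,g)$, gives $g=\sigma f^s$ with $\sigma$ affine and $\sigma_\Pi=\id$. Because $G_g=G_f$, we get
\[
G_f\circ\sigma\circ f^s=d^sG_f=G_f\circ f^s .
\]
As $f^s$ is surjective on $\Cbb^k$, this yields $G_f\circ\sigma=G_f$.

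\textbf{Step 4: eliminating an affine $\sigma$.} This step handles any affine $\sigma$ with $G_f\circ\sigma=G_f$: the $\sigma$ from Step 3, and $g$ itself when $e=1$. Consider $\sigma f\in\Poly_d^k$. Then
\[
G_f\circ(\sigma f)=G_f\circ f=dG_f ,
\]
so $G_{\sigma f}=G_f$ by uniqueness, and $\mu_{\sigma f}=\mu_f$. Generic measure rigidity ($f\in U^{\mathrm{MR}}_{d,k}$) forces $\sigma f=f$, hence $\sigma=\id$ because $f$ is surjective. So $g=f^s$ in every case, and the inclusion $\langle f\rangle\subseteq C_{\mathrm{reg}}(f^\infty)$ is trivial.

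\textbf{Main obstacle.} The delicate point is Step 2: controlling the degree and the $\Pi$-part of $g$. This is why $f_\Pi$ is placed in $U_{d,k-1}$ for $k\geq3$. For $k=2$ the problem is bypassed by the $k=2$ clause of Theorem~\ref{thm:measure-rigidity-input}, together with the affine argument of Step 4 for $e=1$. I would also double-check the Green-function identities for $e=1$ in Step 1, where the uniqueness argument must be applied to $G_f\circ g$ rather than to $G_g$.
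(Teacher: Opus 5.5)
Your proposal is correct and follows essentially the paper's proof. It uses the same open set, the Green-function identity forced by commutation, Theorem~\ref{thm:main} on $\Pi$ (or the $k=2$ clause of Theorem~\ref{thm:measure-rigidity-input}) to pin down $g_\Pi$ and the degree, the image--fiber form $g=\sigma f^s$, and generic measure rigidity applied to $\sigma f$ (the paper's $h=gf$ when $e=1$). The only small deviation is that you get $\mu_{\sigma f}=\mu_f$ from $G_f\circ\sigma=G_f$ rather than from the reverse implication of Theorem~\ref{thm:measure-rigidity-input}, which lets you treat $e=1$ and $s\geq1$ uniformly. Also, for $k=2$ and $e=1$ your pointer should be to Step~4, not Step~3.
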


\begin{proof}
We begin with an identity valid without a genericity assumption.
Let $f\in\Poly_d^k$, and let $g$ be a degree-$e$ regular polynomial
endomorphism commuting with $f^N$.
Write $G_f$ for the Green function of $f$. The function
$e^{-1}G_f\circ g$ has logarithmic growth and satisfies
\[
        (e^{-1}G_f\circ g)\circ f^N
        =d^N(e^{-1}G_f\circ g).
\]
By the uniqueness of the Green function, applied to $f^N$ whose
Green function is $G_f$,
\begin{equation}\label{eq:regular-poly-green}
        G_f\circ g=eG_f.
\end{equation}
If $e\geq2$, applying the same uniqueness statement to $g$ gives
$G_g=G_f$, and hence $\mu_g=\mu_f$.

There are two cases, since Theorem~\ref{thm:main} applies on
$\Pi\simeq\Pbb^{k-1}$ only when $k\geq3$.
Suppose first that $k=2$. Set
$U^{\mathrm{aff}}_{d,2}:=U^{\mathrm{MR}}_{d,2}$ and assume
$f\in U^{\mathrm{aff}}_{d,2}(\Cbb)$. If $e\geq2$, the last assertion
of Theorem~\ref{thm:measure-rigidity-input} gives $g=f^m$ for some
$m\geq1$. If $e=1$,
put $h:=gf$. Equation~\eqref{eq:regular-poly-green} gives
$G_f\circ h=dG_f$, so $G_h=G_f$ and $\mu_h=\mu_f$. The generic
conclusion for maps of the same degree gives $h=f$; the surjectivity of $f$
then gives
$g=\id$.

Assume now that $k\geq3$. Let
$U^{\mathrm{proj}}_{d,k-1}\subset\End_d(\Pi)$ be the dense
Zariski open subset, defined over $\Qbb$, supplied by
Theorem~\ref{thm:main} in dimension $k-1$. The restriction morphism
\[
 \rho_{d,k}:\Poly_d^k
 \longrightarrow\End_d(\Pi),
 \qquad f\longmapsto f_\Pi,
\]
is surjective and defined over $\Qbb$. Set
\[
 U^{\mathrm{aff}}_{d,k}
 :=U^{\mathrm{MR}}_{d,k}
   \cap\rho_{d,k}^{-1}(U^{\mathrm{proj}}_{d,k-1}).
\]
This is a dense Zariski open subset of $\Poly_d^k$ defined over
$\Qbb$: indeed, $\rho_{d,k}$ is surjective and $\Poly_d^k$ is
irreducible. Assume $f\in U^{\mathrm{aff}}_{d,k}(\Cbb)$. Restriction to
$\Pi$ gives $g_\Pi f_\Pi^N=f_\Pi^N g_\Pi$.
Theorem~\ref{thm:main}, applied on $\Pi\simeq\Pbb^{k-1}$, yields $g_\Pi=f_\Pi^m$
for some $m\geq0$, and therefore $e=d^m$.

If $m\geq1$, then $G_g=G_f=G_{f^m}$, and the leading homogeneous
parts of $g$ and $f^m$ differ by a nonzero scalar. By the first
assertion of Theorem~\ref{thm:measure-rigidity-input}, $g=\sigma f^m$
for an affine automorphism $\sigma$ preserving
$J_{f^m}=J_f$. Put $h:=\sigma f$. The reverse implication in
the same assertion gives $\mu_h=\mu_f$. Since $h$ and $f$ both have
degree $d$, the generic conclusion of
Theorem~\ref{thm:measure-rigidity-input} gives $h=f$.
Surjectivity of $f$ gives $\sigma=\id$, and hence $g=f^m$.

If $m=0$, then $e=1$. Put again $h:=gf$. As in the
case $k=2$, \eqref{eq:regular-poly-green} gives
$G_h=G_f$, and the generic conclusion for maps of the same degree in
Theorem~\ref{thm:measure-rigidity-input} gives $h=f$.
Thus $g=\id$.
The reverse inclusion $\langle f\rangle\subset
C_{\mathrm{reg}}(f^\infty)$ is immediate.
\end{proof}

We use the following consequence of Dinh--Sibony's genericity
result~\cite[Lemma~1.69 and Theorem~1.47]{DSbook}.

\begin{lem}\label{lem:unique-infinity}
There exists a dense Zariski open subset $V^{\mathrm{inv}}_{d,k}
\subset\Poly_d^k$, defined over $\Qbb$, such that, for every
$f\in V^{\mathrm{inv}}_{d,k}(\Cbb)$ and every $N\geq1$, the map
$f_\Pi^N$ has no nonempty proper totally invariant algebraic subset
of $\Pi$. Moreover, if a nonempty hypersurface $D\subset\Pbb^k$
satisfies $(f^N)^{-1}(D)=D$, then $D=\Pi$.
\end{lem}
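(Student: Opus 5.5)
The plan is to reduce everything to the endomorphism $f_\Pi$ at infinity and to use Dinh--Sibony's theory of the exceptional set. For $g\in\End_d(\Pbb^{k-1})$, let $\Ecal_g$ be its \emph{exceptional set}: the largest proper algebraic subset of $\Pbb^{k-1}$ that is totally invariant under $g$. By \cite[Theorem~1.47]{DSbook}, $\Ecal_g$ exists and is totally invariant. It also contains every proper algebraic subset that is totally invariant under some iterate $g^N$, so $\Ecal_{g^N}=\Ecal_g$. Hence a single condition, $\Ecal_g=\varnothing$, controls all iterates at once. By \cite[Lemma~1.69]{DSbook}, the locus
\[
 \mathcal V:=\{g\in\End_d(\Pbb^{k-1})\mid \Ecal_g=\varnothing\}
\]
contains a dense Zariski open subset, which we denote $\mathcal V^\circ$. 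The reason is that totally invariant sets are contained in the critical set together with the orbits of its components, with degree bounds that are uniform in $g$; this makes the complementary condition closed. Note that we need the dense Zariski open subset itself and not merely genericity, and that this uniformity in $N$ is exactly what the exceptional set provides. I expect this step to be the main point requiring care. If the cited lemma only gives a countable intersection, I would instead use the characterization of $\Ecal_g$ via the components of $R_{g^n}$ for bounded $n$ to obtain openness directly.

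Next, to get a set defined over $\Qbb$, replace $\mathcal V^\circ$ by its $\Aut(\Cbb/\Qbb)$-invariant version. The condition $\Ecal_g=\varnothing$ is Galois-invariant, since $\Ecal_{g^\sigma}=(\Ecal_g)^\sigma$. So the Zariski closure of the complement of $\mathcal V$ is a proper closed subset defined over $\Qbb$, and we may take $\mathcal V^\circ$ to be its complement. Then set
\[
 V^{\mathrm{inv}}_{d,k}:=\rho_{d,k}^{-1}(\mathcal V^\circ),
\]
where $\rho_{d,k}:f\mapsto f_\Pi$ is the surjective $\Qbb$-morphism used in the proof of Proposition~\ref{prop:regular-affine-centralizer}. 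Since $\Poly_d^k$ is irreducible and $\rho_{d,k}$ is surjective, this set is dense, Zariski open, and defined over $\Qbb$. By construction, the first assertion then holds for every $N$.

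For the second assertion, let $D$ be a nonempty hypersurface with $(f^N)^{-1}(D)=D$, and let $D'$ be the union of the irreducible components of $D$ other than $\Pi$. Because $(f^N)^{-1}(\Pi)=\Pi$ and $f^N$ is finite, the preimage of a component $C\neq\Pi$ has no component equal to $\Pi$. Conversely, $f^N(C)\neq\Pi$ for such $C$. It follows that $D'$ is itself totally invariant: $(f^N)^{-1}(D')=D'$.

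Suppose $D'\neq\varnothing$. Since $k\geq2$, the hypersurface $D'$ of $\Pbb^k$ meets the hyperplane $\Pi$. It does not contain $\Pi$, so $D'\cap\Pi$ is a nonempty algebraic subset of $\Pi$ of dimension $k-2<k-1$. In particular it is proper. Moreover,
\[
 (f_\Pi^N)^{-1}(D'\cap\Pi)=(f^N)^{-1}(D')\cap(f^N)^{-1}(\Pi)=D'\cap\Pi,
\]
so $D'\cap\Pi$ is a nonempty proper totally invariant subset for $f_\Pi^N$. This contradicts the first assertion. Hence $D'=\varnothing$, and therefore $D=\Pi$.
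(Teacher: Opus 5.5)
Your proposal is correct and follows the route the paper indicates. The paper gives no written proof beyond deriving the lemma from Dinh--Sibony's exceptional-set theorem (Theorem~1.47, which also covers iterates) and generic emptiness (Lemma~1.69) applied to $f_\Pi$ and pulled back along $\rho_{d,k}$; your reduction of the hypersurface assertion to the first one, via the totally invariant set $D'\cap\Pi\subsetneq\Pi$, soundly fills in the step the paper leaves implicit, and your Zariski openness rests on the cited lemma, as the paper's does, not on your degree-bound heuristic.
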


\begin{proof}[Proof of Theorem~\ref{thm:regular-polynomial-centralizer}]
Set $U^{\mathrm{poly}}_{d,k}
 :=V^{\mathrm{inv}}_{d,k}\cap U^{\mathrm{aff}}_{d,k}.$
Let $f\in U^{\mathrm{poly}}_{d,k}(\Cbb)$, and let
$g:\Pbb^k\dashrightarrow\Pbb^k$ be a dominant rational map
commuting with $f^N$ for some $N\geq1$.

Put $E:=g^*\Pi$, and let $I_g$ be the indeterminacy locus of $g$.
Since $f^N$ is finite, commutation gives
\[
 (f^N)^*E=d^NE,\qquad (f^N)^{-1}(I_g)=I_g.
\]
Lemma~\ref{lem:unique-infinity} gives $\Supp(E)=\Pi$, hence
$I_g\subset\Pi$. Since $(f_\Pi^N)^{-1}(I_g)=I_g$, the same lemma
gives $I_g=\varnothing$. Thus $g$ is a regular polynomial endomorphism,
and Proposition~\ref{prop:regular-affine-centralizer} gives $g=f^j$
for some $j\geq0$.
The reverse inclusion is immediate.
\end{proof}

\section{Periodic correspondences and proof of Theorem~\ref{thm:periodic-correspondences}}
\label{sec:periodic-correspondences}

\subsection{Periodic self-correspondences}

For a morphism $h:\Pbb^k\to\Pbb^k$, write
\[
 \Gamma_h:=\{(x,h(x))\mid x\in\Pbb^k\},
 \qquad
 \Gamma_h^{\mathrm{op}}:=\{(h(x),x)\mid x\in\Pbb^k\}.
\]

\begin{lem}\label{lem:periodic-self-correspondences}
Let $f\in U_{d,k}(\Cbb)$. If a dominant correspondence
$W\subset\Pbb^k\times\Pbb^k$ satisfies
$(f^m\times f^m)(W)=W$ for some $m\geq1$, then
\[
 W=\Gamma_{f^r}\quad\text{or}\quad W=\Gamma_{f^r}^{\mathrm{op}}
 \qquad\text{for some }r\geq0.
\]
\end{lem}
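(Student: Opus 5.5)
The plan is to normalize $W$ and apply Theorem~\ref{thm:main-semiconjugacy} twice, once to each projection. Let $\nu:Y\to W$ be the normalization. Then $Y$ is a normal integral projective variety of dimension $k$. Put $h_i:=\mathrm{pr}_i\circ\nu:Y\to\Pbb^k$ for $i=1,2$; both are dominant morphisms because $W$ is a dominant correspondence. Write $F:=f^m$ and $\Phi:=F\times F$.

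The first task is to produce a self-map $u$ of $Y$ lifting $\Phi$. Since $\Phi(W)=W$ and $\Phi$ is finite, $\Phi|_W:W\to W$ is a finite surjective morphism, and it lifts to a finite surjective morphism $u:Y\to Y$ between normalizations. By construction $F h_i=h_i u$ for $i=1,2$. Theorem~\ref{thm:main-semiconjugacy} applied to $(h_1,u)$ gives $s_1\geq0$ and a birational $\mu_1:Y\dashrightarrow\Pbb^k$ with
\[
 h_1=f^{s_1}\mu_1,\qquad u=\mu_1^{-1}F\mu_1 .
\]
Applied to $(h_2,u)$, it gives $s_2\geq0$ and $\mu_2$ with $h_2=f^{s_2}\mu_2$ and $u=\mu_2^{-1}F\mu_2$.

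Next, set $\beta:=\mu_2\mu_1^{-1}:\Pbb^k\dashrightarrow\Pbb^k$. This map is birational and satisfies $\beta F=F\beta$, so $\beta\in C_{\mathrm{rat}}(f^\infty)$. By Theorem~\ref{thm:main}, $\beta=f^t$ for some $t\geq0$. Since $\beta$ is birational and $\deg_{\mathrm{top}}f^t=d^{kt}$, we get $t=0$, hence $\mu_1=\mu_2=:\mu$. Therefore $W$ is the closure of the image of
\[
 x\longmapsto\bigl(f^{s_1}(x),f^{s_2}(x)\bigr),\qquad x=\mu(\cdot),
\]
which means $W=\overline{\{(f^{s_1}x,f^{s_2}x)\mid x\in\Pbb^k\}}$. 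Because $W$ is the image of the birational parametrization $(h_1,h_2)$, the map $\Pbb^k\to W$, $x\mapsto(f^{s_1}x,f^{s_2}x)$, is birational onto its image.

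It remains to force $\min(s_1,s_2)=0$. Say $s_1\leq s_2$. If $s_1\geq1$, then for a general $x$ any $x'\neq x$ with $f^{s_1}x'=f^{s_1}x$ also satisfies $f^{s_2}x'=f^{s_2}x$, so the parametrization has generic degree at least $d^{ks_1}>1$. This contradicts birationality. Hence $s_1=0$ and $W=\Gamma_{f^{s_2}}$; symmetrically, if $s_2\leq s_1$ we get $W=\Gamma_{f^{s_1}}^{\mathrm{op}}$.

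I expect the main obstacle to be the birationality bookkeeping. Theorem~\ref{thm:main-semiconjugacy} yields $\mu_i$ that are only birational, and $\mu$ must be identified consistently for both projections. The key point is the step where $\mu_2\mu_1^{-1}$ is shown to commute with $F$ as a rational map. Composing birational maps is legitimate because all maps involved are dominant, so these identities hold on dense opens. The degree argument in the last step must then use that $(h_1,h_2)=\nu$ is birational onto $W$.
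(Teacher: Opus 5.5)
Your proof is correct and follows essentially the paper's route: normalize $W$, lift $f^m\times f^m$ to a finite self-map $u$ of the normalization, apply Theorem~\ref{thm:main-semiconjugacy} to a projection, and invoke Theorem~\ref{thm:main} to identify the second coordinate. The paper is slightly more economical. It applies Theorem~\ref{thm:main-semiconjugacy} only to the first projection $p=f^a\mu$, applies Theorem~\ref{thm:main} directly to $H:=q\mu^{-1}$ to get $H=f^b$, and finishes by reparametrizing $\{(f^a z,f^b z)\}$ through the surjection $f^{\min\{a,b\}}$, with $r=|a-b|$. Your birationality argument at the end is also valid and additionally shows that $\min\{a,b\}=0$.
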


\begin{proof}
Let $\nu:Y\to W$ be the normalization and let $p,q:Y\to\Pbb^k$
be the two projections. The restriction of $f^m\times f^m$ to $W$
induces a unique finite surjective morphism $u:Y\to Y$ such that
\[
 pu=f^m p,\qquad qu=f^m q.
\]
Theorem~\ref{thm:main-semiconjugacy} gives an integer $a\geq0$ and a birational map $\mu:Y\dashrightarrow\Pbb^k$ such that
\[
 p=f^a\mu,\qquad \mu u=f^m\mu.
\]
The dominant rational map $H:=q\mu^{-1}$ therefore satisfies
\[
 Hf^m=q\mu^{-1}f^m=qu\mu^{-1}=f^m q\mu^{-1}=f^m H.
\]
By Theorem~\ref{thm:main}, $H=f^b$ for some $b\geq0$.
It follows that
\begin{equation}\label{eq:self-correspondence-parametrization}
 W=\bigl\{(f^a(z),f^b(z))\mid z\in\Pbb^k\bigr\}.
\end{equation}
Surjectivity of $f^{\min\{a,b\}}$ gives the conclusion with $r=|a-b|$.
\end{proof}

\subsection{Rational dynamical quotients}

\begin{thm}\label{thm:rational-dynamical-quotients}
Let $f\in U_{d,k}(\Cbb)$ and $m\geq1$.
Let $Y$ be a normal integral projective variety of dimension $k$.
Suppose that $H:\Pbb^k\dashrightarrow Y$ and
$v:Y\dashrightarrow Y$ are dominant rational maps satisfying
\begin{equation}\label{eq:rational-dynamical-quotient}
 Hf^m=vH.
\end{equation}
Then there are a unique integer $s\geq0$ and a unique
birational map $\alpha:\Pbb^k\dashrightarrow Y$ such that
\begin{equation}\label{eq:rational-quotient-normal-form}
 H=\alpha f^s,\qquad v=\alpha f^m\alpha^{-1}.
\end{equation}
Moreover, if $H$ is a finite morphism, then $\alpha$ is an isomorphism.
If $Y=\Pbb^k$ and $v$ is a morphism, then
$\alpha\in\PGL_{k+1}(\Cbb)$.
\end{thm}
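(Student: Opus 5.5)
The plan is to mirror the proof of Theorem~\ref{thm:main-semiconjugacy}, but with the roles of source and target exchanged. The key tools are the decomposition theorem (Theorem~\ref{thm:sc-normal-factors}) and the triviality of $\Aut(f^m)$ (Theorem~\ref{thm:aut-iterate-trivial}).

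\textbf{Reduction to a finite morphism.} As in Section~\ref{sec:dominant-rational-maps}, we first replace $H$ by a finite map. Resolve $H$ by the normalized graph $X\to\Pbb^k$, with $X\to Y$, and Stein factorize $X\to Y$. This gives a normal variety $Z$, a finite map $p:Z\to Y$, and a dominant rational map $\Pbb^k\dashrightarrow Z$ with connected general fibers. Since $H$ is generically finite (equal dimensions), that map is birational, say $\beta:\Pbb^k\dashrightarrow Z$, and $H=p\beta$. Conjugating by $\beta$ gives $f_Z:=\beta f^m\beta^{-1}$ with $p f_Z=vp$. Finiteness of $p$ and normality of $Y$ show that $v$ is a finite morphism.

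\textbf{The finite case, by induction on $\deg_{\mathrm{top}} H$.} Fix the finite model $H:\Pbb^k\to Y$ with $Hf^m=vH$, where $v$ is finite.
\begin{enumerate}
\item Form the fiber product $\Pbb^k\times_Y\Pbb^k$ along $H$ and $v\circ H$, or more simply factor $f^m$ through it. Set $W$ to be the normalization of the image of $(H,f^m):\Pbb^k\to Y\times\Pbb^k$. This factors $H=H_1\circ q$ and $f^m=g\circ q$, with $q:\Pbb^k\to W$ finite.
\item Apply Theorem~\ref{thm:sc-normal-factors} to $f^m=g q$. It gives $q=\gamma^{-1}f^j$ with $\gamma:W\to\Pbb^k$ an isomorphism.
\item If $j\ge1$, then $H$ factors through $f^j$: we get $H=H'f^j$ with $\deg H'<\deg H$. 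Moreover $H'$ satisfies $H'f^m=vH'$, after cancelling the surjective map $f^j$, since $f^j$ commutes with $f^m$. Induction then gives $H'=\alpha f^{s'}$ and hence $H=\alpha f^{s'+j}$.
\item If $j=0$, then $(H,f^m)$ is birational onto its image. In this case I would show that $H$ is étale, dually to Lemma~\ref{lem:sc-ramification}. Here $B_H$ plays the role of the branch locus, and I would compare ramification divisors via $\Rcal_{vH}=\Rcal_{Hf^m}$. Using $\Rcal_{Hf^m}=\Rcal_{f^m}+(f^m)^*\Rcal_H$, the goal is to show that $\Supp\Rcal_H$ is totally invariant under $f^m$. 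Corollary~\ref{cor:no-totally-invariant-hypersurface} then gives $\Rcal_H=0$. Since the source is $\Pbb^k$, which is simply connected, an étale $H$ onto normal $Y$ is an isomorphism by purity.
\end{enumerate}
Once $H=\alpha f^s$, we get $v\alpha f^s=\alpha f^{s+m}$. Cancelling the surjective map $f^s$ yields $v=\alpha f^m\alpha^{-1}$.

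\textbf{Uniqueness and the additional assertions.} Uniqueness holds because $\deg_{\mathrm{top}}H=d^{ks}$ determines $s$, and $\alpha$ is then determined since $f^s$ is dominant. If $H$ is finite, the construction yields an isomorphism $\alpha$, by the finiteness clause of Theorem~\ref{thm:sc-normal-factors}. If $Y=\Pbb^k$ and $v$ is a morphism, let $E$ be the exceptional locus of $\alpha$. The relation $\alpha f^m=v\alpha$ makes $E$ totally invariant under $f^m$, and Corollary~\ref{cor:no-totally-invariant-hypersurface} gives $E=\varnothing$, as in the end of the proof of Theorem~\ref{thm:main-semiconjugacy}.

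\textbf{Main obstacle.} I expect step~(4) to be the main difficulty: establishing total invariance of $B_H$ when $j=0$. The source ramification of $H$ must be controlled using only the relation $Hf^m=vH$, and $Y$ is merely normal. If the ramification comparison fails, my fallback is the monodromy argument. The monodromy of $Hf^m$ restricted to fibers of $f^m$ gives a $G_m$-invariant partition. Proposition~\ref{prop:sc-monodromy-partitions} then forces this partition to be given by fibers of some $f^j$, which reduces again to the dichotomy above.
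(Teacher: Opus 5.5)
\textbf{There is a genuine gap at step~(4), which is the whole difficulty.} Neither your ramification comparison nor your monodromy fallback closes it.

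First, the argument you sketch uses only simplicity, through Corollary~\ref{cor:no-totally-invariant-hypersurface}. Theorem~\ref{thm:aut-iterate-trivial} is listed as a tool but never used, and simplicity cannot suffice. Suppose a simple $f$ admitted a nontrivial finite group $G\subset\Aut(f^m)$, and let $H\colon\Pbb^k\to\Pbb^k/G$ be the quotient map. Then $Hf^m=vH$ with $v$ a finite morphism. Moreover $(H,f^m)$ is birational onto its image: if $f^m(\sigma x)=f^m(x)$ for general $x$ and some $\sigma\in G$, then $\sigma f^m(x)=f^m(x)$ because $f^m\sigma=\sigma f^m$, so $\sigma$ fixes a general point and $\sigma=\id$. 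You would be in the case $j=0$ with $\deg_{\mathrm{top}}H=|G|>1$. Excluding this is exactly what strong simplicity is for, so any proof of step~(4) must use it.

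Second, even granting $\Rcal_H=0$, purity requires a regular target. Over a merely normal $Y$, a finite map unramified in codimension one need not be \'etale. For example, take $\Pbb^2\to\Pbb^2/\mu_3$, with $\mu_3$ acting by $\operatorname{diag}(1,\omega,\omega^2)$; this action has only isolated fixed points.

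Third, the comparison in Lemma~\ref{lem:sc-ramification} works because $B_h$ lies in the smooth target of the known map $F$, whose \'etale locus is explicit. In your dual situation, $B_H$ lies on $Y$ and would have to be compared with the branch locus of the unknown map $v$.

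Fourth, your fallback only recovers the partition of $f^{-m}(y)$ by the values of $H$. For $j=0$ this is the discrete partition, so the fallback just reproduces step~(2).

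Separately, the preliminary reduction is wrong as stated. Stein factorization changes the source, producing $p\colon Z\to Y$ with $Z$ only birational to $\Pbb^k$ and $f_Z=\beta f^m\beta^{-1}$ merely rational. Also, $v$ need not become a morphism: if $H$ is birational, then $v=Hf^mH^{-1}$. Steps~(1)--(3) do work for rational $H$ (with $\gamma$ birational), since Theorem~\ref{thm:sc-normal-factors} is stated for rational maps. But the finite case you then treat is not what the reduction produces.

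\textbf{The missing idea is to show directly that some iterate of $f$ collapses the fibers of $H$.} This is how the paper proceeds, and it makes the reduction, the induction and the dichotomy unnecessary.
\begin{enumerate}
\item Let $\Ecal_H\subset\Pbb^k\times\Pbb^k$ be the closure of the set of pairs $(x,x')$ with $H(x)=H(x')$, taken over an open set where $H$ is finite \'etale.
\item The relation $Hf^m=vH$ shows that $f^m\times f^m$ maps each irreducible component of $\Ecal_H$ onto a component. Hence every component is eventually periodic.
\item By Lemma~\ref{lem:periodic-self-correspondences}, a periodic component is $\Gamma_{f^r}$ or $\Gamma^{\mathrm{op}}_{f^r}$. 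This is where $f\in U_{d,k}$ enters, through Theorems~\ref{thm:main-semiconjugacy} and~\ref{thm:main}. Containment of such a graph in $\Ecal_H$ gives $Hf^r=H$, hence $r=0$ by comparing topological degrees.
\item Thus $(f^{mn}\times f^{mn})(\Ecal_H)\subseteq\Delta$ for some $n\geq1$. That is, $f^{mn}$ is constant on general fibers of $H$, so $f^{mn}=RH$ for a dominant rational map $R\colon Y\dashrightarrow\Pbb^k$.
\item Theorem~\ref{thm:sc-normal-factors} then gives $H=\alpha f^s$ at once.
\end{enumerate}
If $H$ is finite, the first projection from the image of $(H,f^{mn})$ to $Y$ is finite and birational, hence an isomorphism since $Y$ is normal. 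So $R$ is a finite morphism, and the finite clause of Theorem~\ref{thm:sc-normal-factors} makes $\alpha$ an isomorphism.

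The rest of your proposal is fine: the cancellation giving $v=\alpha f^m\alpha^{-1}$, the uniqueness argument, and the exceptional-locus argument for $\alpha\in\PGL_{k+1}(\Cbb)$.
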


\begin{proof}
Put $F:=f^m$.
Since $H$ is generically finite and we work in characteristic zero,
there are dense open subsets $U\subset\Pbb^k$ and $V\subset Y$ such that
$H|_U:U\to V$ is finite \'etale.
Let $\Ecal_H$ be the reduced closure of
$\{(x,y)\in U\times U\mid H(x)=H(y)\}$ in
$\Pbb^k\times\Pbb^k$. Each irreducible component of $\Ecal_H$
has dimension $k$ and dominates both factors.

For a general point $(x,y)$ of any such component,
\eqref{eq:rational-dynamical-quotient} gives
\[
 H(x)=H(y)
 \quad\Longrightarrow\quad
 H(F(x))=v(H(x))=v(H(y))=H(F(y)).
\]
As $F\times F$ is finite, it sends each irreducible component of
$\Ecal_H$ onto an irreducible component of $\Ecal_H$.
There are only finitely many components, so each eventually maps to
a periodic component. By Lemma~\ref{lem:periodic-self-correspondences},
every periodic component is $\Gamma_{f^r}$ or
$\Gamma_{f^r}^{\mathrm{op}}$ for some $r\geq0$.
Containment of either graph in $\Ecal_H$ implies
$Hf^r=H$ as rational maps, and hence $r=0$.
Thus the diagonal $\Delta$ is the only periodic component of
$\Ecal_H$. We can choose a single integer $n\geq1$ such that
\begin{equation}\label{eq:quotient-fibers-collapse}
 (F^n\times F^n)(\Ecal_H)\subseteq\Delta.
\end{equation}

Put $G:=f^{mn}$. Equation~\eqref{eq:quotient-fibers-collapse}
says that $G$ is constant on a general fiber of $H$.
Let
\[
 \Gamma:=\overline{\{(H(x),G(x))\mid x\in U\}}
 \subset Y\times\Pbb^k
\]
with reduced structure, and let $p:\Gamma\to Y$ and $q:\Gamma\to\Pbb^k$
be the two projections.
By \eqref{eq:quotient-fibers-collapse}, the first projection from
$\Gamma$ has a single point over a general point of $Y$; thus $p$ is birational.
Set $R:=qp^{-1}:Y\dashrightarrow\Pbb^k$. Then $f^{mn}=RH$.
By Theorem~\ref{thm:sc-normal-factors},
there are $0\leq s\leq mn$ and a birational map
$\alpha:\Pbb^k\dashrightarrow Y$ such that $H=\alpha f^s$.
Substituting in \eqref{eq:rational-dynamical-quotient} and cancelling
the dominant right factor $f^s$ yields
\[
 \alpha f^m=v\alpha.
\]
Thus \eqref{eq:rational-quotient-normal-form} holds.
The equality $\deg_{\mathrm{top}}(H)=d^{ks}$ determines $s$ uniquely,
and $H=\alpha f^s$ determines $\alpha$ uniquely since $f^s$ is surjective.

If $H$ is finite, then $\Gamma$ is the image of the morphism $(H,G)$,
so $p$ is finite. Since $Y$ is normal, $p$ is an isomorphism.
Thus $R$ is a morphism; it is finite because $RH=f^{mn}$ is finite
and $H$ is surjective. The finite case of
Theorem~\ref{thm:sc-normal-factors} shows that $\alpha$ is an isomorphism.

Finally, if $Y=\Pbb^k$ and $v$ is a morphism, the last assertion of
Theorem~\ref{thm:main-semiconjugacy}, applied to
$h=\alpha^{-1}$ and $u=v$, together with the birationality of
$\alpha^{-1}$ gives $\alpha\in\PGL_{k+1}(\Cbb)$, so $H$ is a morphism.
\end{proof}

\subsection{Mixed periodic correspondences}

\begin{proof}[Proof of Theorem~\ref{thm:periodic-correspondences}]
Put $\Phi:=f\times g$.
Suppose that $\Phi^m(Z)=Z$ for some $m\geq1$.
As in the proof of Lemma~\ref{lem:periodic-self-correspondences},
we obtain a dominant rational map $H:\Pbb^k\dashrightarrow\Pbb^k$
satisfying $Hf^m=g^mH$.
Then Theorem~\ref{thm:rational-dynamical-quotients} gives
$\alpha\in\PGL_{k+1}(\Cbb)$ such that $g^m=\alpha f^m\alpha^{-1}$.
By Theorem~\ref{thm:main}, $g=\alpha f\alpha^{-1}$.
Set $W:=(\id\times\alpha^{-1})(Z)$. Then $(f^m\times f^m)(W)=W$.
Lemma~\ref{lem:periodic-self-correspondences} gives
$W=\Gamma_{f^r}$ or $W=\Gamma_{f^r}^{\mathrm{op}}$ for some $r\geq0$.
Thus $Z=(\id\times\alpha)(W)$ is one of the graphs
in \eqref{eq:periodic-correspondence-graphs}.
Conversely, both graphs are invariant under $\Phi$.
\end{proof}

We obtain the following consequence for the diagonal.
Set $\Delta:=\{(x,x)\mid x\in\Pbb^k\}$.

\begin{cor}
\label{cor:preperiodic-diagonal}
Let $f\in U_{d,k}(\Cbb)$, and let
$g:\Pbb^k\to\Pbb^k$ be any endomorphism of degree at least two.
Then $\Delta$ is preperiodic under $f\times g$ if and only if $f=g$.
\end{cor}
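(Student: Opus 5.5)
If $f=g$, then $(f\times f)(\Delta)=\Delta$, so $\Delta$ is invariant and in particular preperiodic. For the converse, suppose $\Delta$ is preperiodic under $\Phi:=f\times g$. Then there are integers $a\geq0$ and $m\geq1$ with $\Phi^{a+m}(\Delta)=\Phi^a(\Delta)$.

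Put $Z:=\Phi^a(\Delta)=\{(f^a(x),g^a(x))\mid x\in\Pbb^k\}$. Since $f$ and $g$ are finite surjective, $Z$ is an integral $k$-dimensional subvariety whose two projections are surjective. Thus $Z$ is a periodic dominant correspondence.

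We will apply Theorem~\ref{thm:periodic-correspondences}, so we need $f$ strongly simple. This holds because $f\in U_{d,k}(\Cbb)$: by definition $U_{d,k}=U_{\mathrm{simp}}\cap V^{\mathrm{aut}}_{d,k}$, so $f$ is simple and satisfies the condition of Proposition~\ref{prop:generic-rigidity}. Theorem~\ref{thm:periodic-correspondences} then gives $\alpha\in\PGL_{k+1}(\Cbb)$ with $g=\alpha f\alpha^{-1}$, together with one of two forms for $Z$. In particular $\deg g=d$ and $g^a=\alpha f^a\alpha^{-1}$.

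\emph{Case 1: $Z=\Gamma_{\alpha f^r}$.} For every $x$ we get
\[
 g^a(x)=\alpha f^r(f^a(x)),
\]
that is, $\alpha f^a\alpha^{-1}=\alpha f^{r+a}$. Cancelling $\alpha$ gives $f^a\alpha^{-1}=f^{a+r}$. Comparing topological degrees forces $r=0$, so $f^a\alpha^{-1}=f^a$. If $a\geq1$, Theorem~\ref{thm:sc-deck-rigidity} gives $\alpha=\id$. If $a=0$, then $Z=\Delta=\Gamma_\alpha$, which also forces $\alpha=\id$. In both subcases $g=\alpha f\alpha^{-1}=f$.

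\emph{Case 2: $Z=\{(x,y)\mid x=f^r\alpha^{-1}(y)\}$.} Every point of $Z$ has the form $(f^a(x),\alpha f^a\alpha^{-1}(x))$. Substituting into the defining equation gives, as an identity of maps,
\[
 f^a=f^r\alpha^{-1}\alpha f^a\alpha^{-1}=f^{r+a}\alpha^{-1}.
\]
Degree comparison again gives $r=0$, and then Theorem~\ref{thm:sc-deck-rigidity} (or the direct argument when $a=0$) gives $\alpha=\id$. Hence $g=f$ in this case as well.

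The only delicate point is that the deck-rigidity step needs an identity of the form $f^a\delta=f^a$. The cancellation above reduces both cases to exactly that form. The subcase $a=0$, where Theorem~\ref{thm:sc-deck-rigidity} does not apply, is handled directly because then $Z=\Delta$.
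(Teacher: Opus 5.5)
Your proof is correct and is essentially the paper's argument. You apply Theorem~\ref{thm:periodic-correspondences} to the periodic correspondence $(f\times g)^a(\Delta)$, obtain $g^a=\alpha f^{a+r}$ or $f^a=f^r\alpha^{-1}g^a$, compare degrees to force $r=0$, and deduce $\alpha=\id$ from $f^a\alpha^{-1}=f^a$ via Theorem~\ref{thm:sc-deck-rigidity} (directly when $a=0$); your explicit checks that $f$ is strongly simple and that $(f\times g)^a(\Delta)$ is a dominant correspondence are details the paper leaves implicit.
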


\begin{proof}
If $f=g$, then $\Delta$ is invariant.
Conversely, if $\Delta$ is preperiodic, choose $n\geq0$ so that
$W:=(f\times g)^n(\Delta)$ is periodic.
Theorem~\ref{thm:periodic-correspondences} gives
$g=\alpha f\alpha^{-1}$ and either
\[
 g^n=\alpha f^{n+r}\qquad\text{or}\qquad
 f^n=f^r\alpha^{-1}g^n
\]
for some $r\geq0$ and $\alpha\in\PGL_{k+1}(\Cbb)$.
Comparing degrees gives $r=0$, hence $g^n=\alpha f^n$.
Thus $f^n=f^n\alpha^{-1}$, which gives $\alpha=\id$ by
Theorem~\ref{thm:sc-deck-rigidity} when $n\geq1$, and directly when $n=0$.
Thus $g=f$.
\end{proof}

\bibliographystyle{plain}
\bibliography{Mybio2}

@article{Smale1998,
  author  = {Smale, Steve},
  title   = {Mathematical problems for the next century},
  journal = {The Mathematical Intelligencer},
  volume  = {20},
  number  = {2},
  pages   = {7--15},
  year    = {1998},
  doi     = {10.1007/BF03025291}
}

@article{BonattiCrovisierWilkinson2009,
  author  = {Bonatti, Christian and Crovisier, Sylvain and Wilkinson, Amie},
  title   = {The {$C^1$} generic diffeomorphism has trivial centralizer},
  journal = {Publications Math{\'e}matiques de l'Institut des Hautes {\'E}tudes Scientifiques},
  volume  = {109},
  pages   = {185--244},
  year    = {2009},
  doi     = {10.1007/s10240-009-0021-z}
}

@misc{beaumont2025centralizersendomorphismsprojectiveline,
      title={On the centralizers of endomorphisms of the projective line}, 
      author={Beaumont, Alonso},
      year={2025},
      howpublished={Preprint, arXiv:\allowbreak 2512.13523},
      eprint={2512.13523},
      archivePrefix={arXiv},
      primaryClass={math.DS},
      url={https://arxiv.org/abs/2512.13523}, 
}

@misc{GT24,
 author = {Igors Gorbovickis and Johan Taflin},
 title = {Independence of multipliers in several variables complex dynamics},
 year = {2024},
 howpublished = {Preprint, {arXiv}:2411.12856 [math.{DS}]},
 url = {https://arxiv.org/abs/2411.12856},
 arXiv = {arXiv:2411.12856}
}

@article{Ritt1923,
  author  = {Ritt, J. F.},
  title   = {Permutable rational functions},
  journal = {Transactions of the American Mathematical Society},
  volume  = {25},
  number  = {3},
  pages   = {399--448},
  year    = {1923},
  doi     = {10.2307/1989018}
}

@article{Pak20,
 author = {Pakovich, Fedor},
 title = {Finiteness theorems for commuting and semiconjugate rational functions},
 fjournal = {Conformal Geometry and Dynamics},
 journal = {Conform. Geom. Dyn.},
 issn = {1088-4173},
 volume = {24},
 pages = {202--229},
 year = {2020},
 language = {English},
 doi = {10.1090/ecgd/354},
 zbMATH = {7271858},
 Zbl = {1451.30053}
}

@article{Pak21,
 author = {Pakovich, Fedor},
 title = {Commuting rational functions revisited},
 fjournal = {Ergodic Theory and Dynamical Systems},
 journal = {Ergodic Theory Dyn. Syst.},
 issn = {0143-3857},
 volume = {41},
 number = {1},
 pages = {295--320},
 year = {2021},
 language = {English},
 doi = {10.1017/etds.2019.51},
 zbMATH = {7282579},
 Zbl = {1461.30059}
}

@incollection{DSbook,
 author = {Dinh, Tien-Cuong and Sibony, Nessim},
 title = {Dynamics in several complex variables: endomorphisms of projective spaces and polynomial-like mappings},
 booktitle = {Holomorphic Dynamical Systems},
 editor = {Gentili, Graziano and Guenot, Jacques and Patrizio, Giorgio},
 series = {Lecture Notes in Mathematics},
 volume = {1998},
 isbn = {978-3-642-13170-7; 978-3-642-13171-4},
 pages = {165--294},
 year = {2010},
 publisher = {Springer},
 address = {Berlin},
 doi = {10.1007/978-3-642-13171-4_4},
 language = {English},
 zbMATH = {5879453},
 Zbl = {1218.37055}
}

@incollection{sibonybook,
 author = {Sibony, Nessim},
 title = {Dynamique des applications rationnelles de {$\mathbb{P}^k$}},
 booktitle = {Dynamique et g\'eom\'etrie complexes},
 series = {Panoramas et Synth\`eses},
 volume = {8},
 isbn = {2-85629-078-7},
 pages = {97--185},
 year = {1999},
 publisher = {Soci{\'e}t{\'e} Math{\'e}matique de France},
 address = {Paris},
 language = {French},
 zbMATH = {1908328},
 Zbl = {1020.37026}
}

@article{DinhCommuting,
 author = {Dinh, Tien-Cuong},
 title = {Sur les endomorphismes polynomiaux permutables de {${\mathbb C}^2$}},
 fjournal = {Annales de l'Institut Fourier},
 journal = {Ann. Inst. Fourier},
 issn = {0373-0956},
 volume = {51},
 number = {2},
 pages = {431--459},
 year = {2001},
 language = {French},
 doi = {10.5802/aif.1828},
 url = {https://eudml.org/doc/115921},
 zbMATH = {1584199},
 Zbl = {0977.30016}
}

@article{AguilarAguilar,
 author = {Aguilar Aguilar, Rodolfo},
 title = {The fundamental group of partial compactifications of the complement of a real line arrangement},
 fjournal = {Topology and its Applications},
 journal = {Topology Appl.},
 issn = {0166-8641},
 volume = {283},
 pages = {107388},
 year = {2020},
 language = {English},
 doi = {10.1016/j.topol.2020.107388},
 zbMATH = {7285219},
 Zbl = {1460.57023}
}

@misc{pakovich2026periodiccurvesgeneralendomorphisms,
      title={Periodic curves for general endomorphisms of {${\mathbb C}{\mathbb P}^1\times {\mathbb C}{\mathbb P}^1$}}, 
      author={Pakovich, Fedor},
      year={2025},
      howpublished={Preprint, arXiv:\allowbreak 2506.09948},
      eprint={2506.09948},
      archivePrefix={arXiv},
      primaryClass={math.DS},
      url={https://arxiv.org/abs/2506.09948}, 
}

@article{pakovich-deg4,
 author = {Pakovich, Fedor},
 title = {On iterates of rational functions with maximal number of critical values},
 fjournal = {Journal d'Analyse Math{\'e}matique},
 journal = {J. Anal. Math.},
 issn = {0021-7670},
 volume = {156},
 number = {1},
 pages = {213--251},
 year = {2025},
 language = {English},
 doi = {10.1007/s11854-025-0386-z},
 zbMATH = {8103492},
 Zbl = {1573.30064}
}

@article{DinhSibony2002,
  author  = {Dinh, Tien-Cuong and Sibony, Nessim},
  title   = {Sur les endomorphismes holomorphes permutables de {${\mathbb P}^k$}},
  journal = {Mathematische Annalen},
  volume  = {324},
  number  = {1},
  year    = {2002},
  pages   = {33--70},
  url     = {https://arxiv.org/abs/math/0007017}
}

@article{Kaufmann2018,
  author  = {Kaufmann, Lucas},
  title   = {Commuting pairs of endomorphisms of $\mathbb{P}^2$},
  journal = {Ergodic Theory and Dynamical Systems},
  volume  = {38},
  number  = {3},
  year    = {2018},
  pages   = {1025--1047},
  doi     = {10.1017/etds.2016.54}
}

@incollection{MilnorLattes,
  author        = {Milnor, John},
  title         = {On {L}att{\`e}s maps},
  booktitle     = {Dynamics on the Riemann Sphere},
  editor        = {Hjorth, Poul G. and Petersen, Carsten Lunde},
  publisher     = {European Mathematical Society},
  address       = {Z{\"u}rich},
  pages         = {9--43},
  year          = {2006},
  doi           = {10.4171/011-1/1},
  eprint        = {math/0402147},
  archivePrefix = {arXiv}
}

@article{LevyMorphisms,
  author  = {Levy, Alon},
  title   = {The space of morphisms on projective space},
  journal = {Acta Arithmetica},
  volume  = {146},
  number  = {1},
  year    = {2011},
  pages   = {13--31},
  doi     = {10.4064/aa146-1-2},
  url     = {https://doi.org/10.4064/aa146-1-2}
}

@article{SilvermanRatMaps,
  author  = {Silverman, Joseph H.},
  title   = {The space of rational maps on {$\mathbb{P}^1$}},
  journal = {Duke Mathematical Journal},
  volume  = {94},
  number  = {1},
  year    = {1998},
  pages   = {41--77},
  doi     = {10.1215/S0012-7094-98-09404-2},
  eprint  = {math/9609212},
  archivePrefix = {arXiv}
}

@article{SilvermanCommutingAffinePlane,
  author  = {Silverman, Joseph H.},
  title   = {Automorphism groups of commuting polynomial maps of the affine plane},
  journal = {Rend. Lincei Mat. Appl.},
  volume  = {36},
  pages   = {671--696},
  year    = {2025},
  doi     = {10.4171/RLM/1087}
}

@article{Fatou1923,
  author  = {Fatou, Pierre},
  title   = {Sur l'it{\'e}ration analytique et les substitutions permutables},
  journal = {Journal de Math{\'e}matiques Pures et Appliqu{\'e}es},
  series  = {9},
  volume  = {2},
  year    = {1923},
  pages   = {343--384},
  note    = {Continued in vol.~3 (1924), 1--50},
  url     = {http://eudml.org/doc/234679}
}

@article{Julia1922,
  author  = {Julia, Gaston},
  title   = {M{\'e}moire sur la permutabilit{\'e} des fractions rationnelles},
  journal = {Annales scientifiques de l'{\'E}cole Normale Sup{\'e}rieure},
  series  = {3},
  volume  = {39},
  year    = {1922},
  pages   = {131--215},
  doi     = {10.24033/asens.740}
}

@article{Eremenko1990,
  author  = {Eremenko, Alexandre},
  title   = {Some functional equations connected with the iteration of rational functions},
  journal = {Leningrad Mathematical Journal},
  volume  = {1},
  number  = {4},
  year    = {1990},
  pages   = {905--919},
  note    = {Translated from Algebra i Analiz \textbf{1} (1989), no.~4, 102--116}
}

@book{SGA1,
      author={Grothendieck, Alexander},
      title={Rev\^etements \'etales et groupe fondamental ({SGA 1})},
      series={Documents Math\'ematiques},
      volume={3},
      publisher={Soci\'et\'e Math\'ematique de France},
      address={Paris},
      year={2003},
      pages={xviii+327},
      note={S\'eminaire de G\'eom\'etrie Alg\'ebrique du Bois Marie 1960--61, dirig\'e par A. Grothendieck, augment\'e de deux expos\'es de Mich{\`e}le Raynaud}
}

@article{pcfsilverman,
 author = {Ingram, Patrick and Ramadas, Rohini and Silverman, Joseph H.},
 title = {Post-critically finite maps on {{\(\mathbb{P}^n\)}} for {{\(n\ge 2\)}} are sparse},
 fjournal = {Transactions of the American Mathematical Society},
 journal = {Trans. Am. Math. Soc.},
 issn = {0002-9947},
 volume = {376},
 number = {5},
 pages = {3087--3109},
 year = {2023},
 language = {English},
 doi = {10.1090/tran/8871},
 zbMATH = {7678796},
 Zbl = {1516.37146}
}

@article{GauthierTaflinVignyPCF,
  author    = {Gauthier, Thomas and Taflin, Johan and Vigny, Gabriel},
  title     = {Sparsity of postcritically finite maps of {$\mathbb{P}^k$}
               and beyond: {A} complex analytic approach},
  journal   = {Publications Math\'ematiques de l'IH\'ES},
  volume    = {143},
  pages     = {1--96},
  year      = {2026},
  doi       = {10.5802/pmihes.1},
  eprint    = {2305.02246},
  archivePrefix = {arXiv},
  primaryClass  = {math.DS}
}

@book{GuralnickLawther2024,
  author    = {Guralnick, Robert M. and Lawther, Ross},
  title     = {Generic stabilizers in actions of simple algebraic groups},
  series    = {Memoirs of the American Mathematical Society},
  volume    = {300},
  publisher = {American Mathematical Society},
  year      = {2024},
  doi       = {10.1090/memo/1502},
  note      = {No. 1502}
}

@unpublished{ZhangMeasureRigidity,
  author = {Zhang, Yugang},
  title  = {Measure rigidity for regular polynomial endomorphisms and applications},
  note   = {Preprint, arXiv:2608.23749},
  year   = {2026},
  eprint = {2608.23749},
  archivePrefix = {arXiv},
  primaryClass = {math.DS},
  url = {https://arxiv.org/abs/2608.23749}
}

@book{Liu2002,
  author    = {Liu, Qing},
  title     = {Algebraic Geometry and Arithmetic Curves},
  series    = {Oxford Graduate Texts in Mathematics},
  volume    = {6},
  note      = {Translated from the French by Reinie Ern{\'e}},
  publisher = {Oxford University Press},
  address   = {Oxford},
  year      = {2002},
  pages     = {xvi+576},
  isbn      = {978-0-19-850284-5},
  doi       = {10.1093/oso/9780198502845.001.0001}
}

@article{DeMarcoPCFDensity,
  author  = {DeMarco, Laura},
  title   = {Dynamical moduli spaces and elliptic curves},
  journal = {Annales de la Facult{\'e} des Sciences de Toulouse.
             Math{\'e}matiques},
  series  = {6},
  volume  = {27},
  number  = {2},
  year    = {2018},
  pages   = {389--420},
  doi     = {10.5802/afst.1573}
}

@article{GhiocaTucker2021,
  author  = {Ghioca, Dragos and Tucker, Thomas J.},
  title   = {A reformulation of the dynamical {Manin--Mumford} conjecture},
  journal = {Bulletin of the Australian Mathematical Society},
  volume  = {103},
  number  = {1},
  year    = {2021},
  pages   = {154--161},
  doi     = {10.1017/S0004972720000477}
}

@article{DeMarcoMavrakiPreperiodic,
  author        = {DeMarco, Laura and Mavraki, Niki Myrto},
  title         = {The geometry of preperiodic points in families of maps on {$\mathbb{P}^N$}},
  journal       = {Enseign. Math.},
  volume        = {72},
  pages         = {305--335},
  year          = {2026},
  doi           = {10.4171/LEM/1102},
  eprint        = {2407.10894},
  archivePrefix = {arXiv},
  primaryClass  = {math.DS},
  url           = {https://arxiv.org/abs/2407.10894}
}

@unpublished{DeMarcoCIRMLectures,
  author = {DeMarco, Laura},
  title  = {The (algebraic) geometry of preperiodic points in {$\mathbb{P}^N$}, in three lectures},
  note   = {Lecture notes for the {\'E}tats de la recherche {SMF} program on analytic, algebraic, and arithmetic dynamics, {CIRM}},
  year   = {2025},
  url    = {https://people.math.harvard.edu/~demarco/CIRM_Jan2025_LectureNotes.pdf}
}

@misc{GauthierTaflin2026,
  author = {Gauthier, Thomas and Taflin, Johan},
  title  = {Towards a uniform dynamical {Manin--Mumford} conjecture for polynomial endomorphisms of {$\mathbb{C}^k$}},
  note   = {Forthcoming},
  year   = {2026}
}

@book{Nekrashevych2005,
  author    = {Nekrashevych, Volodymyr},
  title     = {Self-Similar Groups},
  series    = {Mathematical Surveys and Monographs},
  volume    = {117},
  publisher = {American Mathematical Society},
  address   = {Providence, RI},
  year      = {2005},
  pages     = {xii+231},
  doi       = {10.1090/surv/117}
}

@incollection{Nekrashevych2011,
  author    = {Nekrashevych, Volodymyr},
  title     = {Iterated monodromy groups},
  booktitle = {Groups St Andrews 2009 in Bath. Volume 1},
  series    = {London Mathematical Society Lecture Note Series},
  volume    = {387},
  pages     = {41--93},
  publisher = {Cambridge University Press},
  address   = {Cambridge},
  year      = {2011},
  doi       = {10.1017/CBO9780511842467.004}
}

@article{Nekrashevych2014Models,
  author  = {Nekrashevych, Volodymyr},
  title   = {Combinatorial models of expanding dynamical systems},
  journal = {Ergodic Theory Dynam. Systems},
  volume  = {34},
  pages   = {938--985},
  year    = {2014},
  doi     = {10.1017/etds.2012.163}
}

@article{BartholdiNekrashevych2006,
  author  = {Bartholdi, Laurent and Nekrashevych, Volodymyr},
  title   = {Thurston equivalence of topological polynomials},
  journal = {Acta Math.},
  volume  = {197},
  number  = {1},
  pages   = {1--51},
  year    = {2006},
  doi     = {10.1007/s11511-006-0007-3}
}

@article{BartholdiNekrashevych2008,
  author  = {Bartholdi, Laurent and Nekrashevych, Volodymyr V.},
  title   = {Iterated monodromy groups of quadratic polynomials. {I}},
  journal = {Groups Geom. Dyn.},
  volume  = {2},
  number  = {3},
  pages   = {309--336},
  year    = {2008},
  doi     = {10.4171/GGD/42}
}

@article{Cogolludo2011,
  author  = {Cogolludo-Agust{\'\i}n, Jos{\'e} Ignacio},
  title   = {Braid monodromy of algebraic curves},
  journal = {Annales math{\'e}matiques Blaise Pascal},
  volume  = {18},
  number  = {1},
  pages   = {141--209},
  year    = {2011},
  doi     = {10.5802/ambp.295},
  url     = {https://www.numdam.org/articles/10.5802/ambp.295/}
}

@misc{LuoMeng2026,
  author        = {Luo, Yujie and Meng, Sheng},
  title         = {Log {Calabi--Yau} structure for endomorphisms on {$\mathbf{P}^n$}},
  year          = {2026},
  howpublished  = {Preprint, arXiv:\allowbreak 2608.02114},
  eprint        = {2608.02114},
  archivePrefix = {arXiv},
  primaryClass  = {math.AG},
  url           = {https://arxiv.org/abs/2608.02114}
}

@misc{OlechnowiczWeinreich2026,
  author        = {Olechnowicz, Matt and Weinreich, Max},
  title         = {Critical loci of self-maps of projective space},
  year          = {2026},
  howpublished  = {Preprint, arXiv:\allowbreak 2607.21923},
  eprint        = {2607.21923},
  archivePrefix = {arXiv},
  primaryClass  = {math.AG},
  url           = {https://arxiv.org/abs/2607.21923}
}

@misc{CantatXieBirationalConjugacies,
  author  = {Cantat, Serge and Xie, Junyi},
  title   = {Birational conjugacies between endomorphisms on the projective plane},
  year    = {2020},
  howpublished = {Preprint, arXiv:\allowbreak 2006.00051},
  url     = {https://arxiv.org/abs/2006.00051}
}

@article{JiXieMultiplierSpectrum,
  author  = {Ji, Zhuchao and Xie, Junyi},
  title   = {The multiplier spectrum morphism is generically injective},
  journal = {Journal of the European Mathematical Society},
  year    = {2025},
  note    = {Published online first},
  doi     = {10.4171/JEMS/1750}
}

@article{MedvedevScanlon2014,
  author  = {Medvedev, Alice and Scanlon, Thomas},
  title   = {Invariant varieties for polynomial dynamical systems},
  journal = {Annals of Mathematics},
  series  = {2},
  volume  = {179},
  number  = {1},
  year    = {2014},
  pages   = {81--177},
  doi     = {10.4007/annals.2014.179.1.2}
}

@article{PakovichInvariantCurves,
  author  = {Pakovich, Fedor},
  title   = {Invariant curves for endomorphisms of {$\mathbb P^1\times\mathbb P^1$}},
  journal = {Mathematische Annalen},
  volume  = {385},
  year    = {2023},
  pages   = {259--307},
  doi     = {10.1007/s00208-021-02304-5}
}

@article{PakovichSemiconjugate2016,
  author  = {Pakovich, Fedor},
  title   = {On semiconjugate rational functions},
  journal = {Geometric and Functional Analysis},
  volume  = {26},
  number  = {4},
  year    = {2016},
  pages   = {1217--1243},
  doi     = {10.1007/s00039-016-0383-6}
}
\end{document}